\numberwithin{equation}{section}
\newtheorem{theorem}{Theorem}
\newtheorem{lemma}[theorem]{Lemma}
\newtheorem{proposition}[theorem]{Proposition}
\newtheorem{corollary}[theorem]{Corollary}
\theoremstyle{remark}
\newtheorem{remark}[theorem]{Remark}
\newtheorem{conjecture}[theorem]{Conjecture}
\begin{document}

\title[Holomorphic projective connections on compact threefolds]{Holomorphic
projective connections on compact complex threefolds}

\author[I. Biswas]{Indranil Biswas}

\address{School of Mathematics, Tata Institute of Fundamental
Research, Homi Bhabha Road, Mumbai 400005, India}

\email{indranil@math.tifr.res.in}

\author[S. Dumitrescu]{Sorin Dumitrescu}

\address{Universit\'e C\^ote d'Azur, CNRS, LJAD, France}

\email{dumitres@unice.fr}

\keywords{Holomorphic projective connection, transitive Killing Lie algebra, projective 
threefolds, Shimura curve, modular family of false elliptic curves.}

\subjclass[2010]{53A15, 53C23, 57S20, 14D10} 

\begin{abstract}
We prove that a holomorphic projective connection on a complex projective threefold is either flat, or it is
a translation invariant holomorphic projective connection on an abelian threefold. In the second case, a generic 
translation invariant holomorphic affine connection on the abelian variety is \textit{not} projectively
flat. We also 
prove that a simply connected compact complex threefold with trivial canonical line
bundle does not admit any holomorphic projective connection.
\end{abstract}

\maketitle

\section{Introduction}

An important consequence of the uniformization theorem for Riemann surfaces is that any Riemann surface 
admits a holomorphic projective structure which is isomorphic either to the one-dimensional model ${\mathbb 
C}{\mathbb P}^1$, or to a quotient of the complex affine line $\mathbb C$ by a discrete group of translations, or 
to a quotient of the complex hyperbolic space ${\rm \mathbb H}_{\mathbb C}^1$ by a torsion-free discrete subgroup 
of ${\rm SU}(1,1) \,\simeq\, {\rm SL}(2, \mathbb R)$ \cite{Gu,StG}. In higher dimensions
compact complex manifolds do not, in general, admit any holomorphic  projective  structure.

Kobayashi and Ochiai in \cite{KO,KO1} classified compact K\"ahler--Einstein manifolds admitting a holomorphic 
projective connection. Their result says that the only examples of compact K\"ahler--Einstein manifolds admitting a 
holomorphic projective connection are the standard ones; we recall that the $n$--dimensional standard examples are 
the following: the complex projective space ${\mathbb C}{\mathbb P}^n$, all \'etale quotients of complex
$n$--tori and all
compact quotients of the complex hyperbolic $n$-space ${\rm \mathbb H}_{\mathbb C}^n$ by a torsion-free discrete 
subgroup of ${\rm SU}(n,1)$. All of these three types of manifolds are endowed with a standard flat 
holomorphic projective connection, i.e., a holomorphic projective structure, which is locally modeled on ${\mathbb 
C}{\mathbb P}^n$ (see Section \ref{section 2}).

Moreover, Kobayashi and Ochiai gave a classification of compact complex surfaces admitting holomorphic projective 
connections \cite{KO,KO1}. Their classification shows that all those compact complex surfaces also admit flat 
holomorphic projective connections. The geometry of flat holomorphic projective structures on compact complex 
surfaces was studied by Klingler in \cite{Kl2}. Subsequently, it was proved in \cite{Du1} that all
holomorphic (normal) projective connections on compact complex surfaces are flat.

Here we study the local geometry of holomorphic projective connections on compact complex manifolds 
of dimension three and higher. For defining holomorphic projective connections we adopt the 
terminology of \cite{Gu,MM}; these connections are holomorphic normal projective connections in the terminology of 
\cite{Ka,KO, JR1} (see Section \ref{section 2}).

For compact K\"ahler--Einstein manifolds, using the classification in \cite{KO,KO1}, and
generalizing, to the non-flat case, some results of Mok--Yeung and Klingler on flat
projective connections, \cite{MY,Kl1}, we prove the following (see Section \ref{pt1}):

\begin{theorem}\label{classification}
Let M be a compact K\"ahler--Einstein manifold of complex dimension $n\,>\, 1$ endowed with a holomorphic projective
connection. Then the following hold:
\begin{enumerate} 
\item either $M$ is the complex projective space ${\mathbb C}{\mathbb P}^n$ endowed with its standard flat projective 
connection;

\item or $M$ is a quotient of the complex hyperbolic space ${\rm \mathbb H}_{\mathbb C}^n$, by a discrete 
subgroup in ${\rm SU}(n,1)$, endowed with its induced standard flat projective connection;

\item or $M$ is an \'etale quotient of a compact complex $n$--torus endowed with the holomorphic projective 
connection induced by a translation invariant holomorphic torsionfree affine connection on
the universal cover ${\mathbb C}^n$.
For $n\,\geq\, 3$, the general translation invariant holomorphic torsionfree affine connection on ${\mathbb C}^n$ is not projectively flat.
\end{enumerate}
\end{theorem}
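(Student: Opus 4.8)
The plan is to combine the Kobayashi--Ochiai classification of the underlying complex manifold with a separate analysis, on each of the three resulting model spaces, of the holomorphic projective connection itself. By \cite{KO,KO1} a compact K\"ahler--Einstein manifold $M$ of dimension $n>1$ carrying a holomorphic projective connection is biholomorphic to $\mathbb{CP}^n$, to a compact quotient $\Gamma\backslash \mathbb{H}_{\mathbb C}^n$ with $\Gamma\subset \mathrm{SU}(n,1)$ discrete, or to an \'etale quotient of a compact complex $n$-torus. Each of these already carries the standard flat projective connection described in Section~\ref{section 2}, so what remains is to pin down \emph{which} holomorphic projective connection $M$ can support. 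My organizing principle is that the difference of two holomorphic (normal) projective connections is a holomorphic section of the bundle
\[
E \;:=\; (S^2\Omega^1_M\otimes T_M)\big/\Omega^1_M,
\]
where $\Omega^1_M\hookrightarrow S^2\Omega^1_M\otimes T_M$ is the symmetrised insertion $\omega\mapsto \mathrm{Id}\odot\omega$ identifying projectively equivalent affine connections; equivalently the relevant invariant is the projective Weyl curvature $W$, a holomorphic section of the associated Weyl bundle $\mathcal W\subset \Omega^1_M\otimes\Lambda^2\Omega^1_M\otimes T_M$, which (as $n\ge 2$) vanishes precisely when the projective connection is flat.

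On $\mathbb{CP}^n$ I would argue by rigidity: $E$ and $\mathcal W$ are homogeneous bundles, and Bott's formula gives $H^0(\mathbb{CP}^n,\mathcal W)=0$, so $W\equiv 0$ and the connection is flat; a flat holomorphic projective structure on the simply connected $\mathbb{CP}^n$ has a developing map which is a biholomorphism onto $\mathbb{CP}^n$, so the connection is the standard one, proving case~(1). In the torus case one cheaply gets much more: since $T_M$ and $\Omega^1_M$ are holomorphically trivial, $E$ is a trivial bundle and its only holomorphic sections are the constant (translation invariant) ones, whence every holomorphic projective connection differs from the flat one by a constant tensor and is therefore induced by a translation invariant torsionfree holomorphic affine connection on the universal cover $\mathbb{C}^n$; this settles case~(3), and the connection is generally non-flat for $n\ge3$, as seen by computing the projective Weyl curvature of a generic constant $\nabla$ and exhibiting a nonvanishing component.

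The essential work, and the step where I expect the main difficulty, is the ball quotient case~(2), and this is exactly where the non-flat generalisation of Mok--Yeung and Klingler \cite{MY,Kl1} is required. The strategy is to show that the holomorphic projective Weyl curvature $W$ of \emph{any} holomorphic projective connection on $\Gamma\backslash\mathbb{H}^n_{\mathbb C}$ vanishes. I would run a Weitzenb\"ock/Bochner argument against the (negative) K\"ahler--Einstein metric: because $\mathcal W$ carries a definite projective weight, the curvature term in the Bochner identity should have a sign forcing every holomorphic section of $\mathcal W$ to be parallel for the Levi--Civita connection. A parallel holomorphic section corresponds to an element of the fibre $\mathcal W_0$ fixed by the restricted holonomy group $\mathrm U(n)$ acting through the standard representation $V=\mathbb{C}^n$; the representation-theoretic content is then that $\mathcal W_0$, a Cartan-type Weyl component inside $V^*\otimes\Lambda^2 V^*\otimes V$, contains no trivial $\mathrm U(n)$-summand for $n\ge2$, so $W\equiv0$ and the connection is flat. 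Flatness then reduces us to the flat rigidity of \cite{MY,Kl1}, by which a flat holomorphic projective structure on $\Gamma\backslash\mathbb{H}^n_{\mathbb C}$ coincides with the standard one induced from the embedding $\mathbb{H}^n_{\mathbb C}\hookrightarrow\mathbb{CP}^n$. The delicate points I anticipate are verifying that the Weitzenb\"ock curvature term for $\mathcal W$ genuinely has the sign needed to force parallelism (tracking the projective weight through the K\"ahler--Einstein normalisation), and confirming the vanishing of $\mathrm U(n)$-invariants in $\mathcal W_0$; together these constitute the promised extension of \cite{MY,Kl1} from flat to arbitrary holomorphic projective connections.
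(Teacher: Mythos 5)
Your reduction via Kobayashi--Ochiai and your handling of cases (1) and (3) are essentially sound. For ${\mathbb C}{\mathbb P}^n$ the vanishing $H^0({\mathbb C}{\mathbb P}^n, \bigwedge^2 T^*{\mathbb C}{\mathbb P}^n\otimes T^*{\mathbb C}{\mathbb P}^n\otimes T{\mathbb C}{\mathbb P}^n)=0$ does hold (restrict to a line: every summand has degree $\le -1$), so your route -- Weyl tensor vanishes, then developing map -- works; it is the route of the paper's Proposition \ref{uniruled}, while the paper's Corollary \ref{cor2} instead proves uniqueness of the projective connection via $H^0(S^2T^*\otimes T)=0$ and stability. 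For tori your argument is the paper's (Lemma \ref{trivial class} plus triviality of $TM$), except that ``exhibiting a nonvanishing component'' is precisely the nontrivial computation the paper carries out in Proposition \ref{abelian} (the family $\nabla^{A,B,C,D,E}$, whose Weyl tensor vanishes iff $C=D$); some such explicit example must actually be produced, since genericity follows only after one non-flat connection is exhibited.

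The genuine gap is case (2): your Weitzenb\"ock argument has the \emph{wrong sign}. On a compact ball quotient $K_M$ is ample, so for the (negative) K\"ahler--Einstein metric $T_M$ is Hermitian--Einstein with negative factor and $T^*M$ with positive factor. Every irreducible summand of the Weyl bundle ${\mathcal W}\subset \bigwedge^2T^*M\otimes T^*M\otimes TM$ has central weight $-2$ (scalars $t\cdot{\rm Id}$ act by $t^{-2}$), hence is Hermitian--Einstein of slope $+2\mu(T^*M)>0$. Kobayashi's vanishing theorem forces sections to vanish when the Einstein factor is negative and to be parallel when it is zero; with a \emph{positive} factor the Bochner identity yields no conclusion at all (compare ${\mathcal O}(1)$ on ${\mathbb C}{\mathbb P}^n$). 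So holomorphic sections of ${\mathcal W}$ are not forced to be parallel, and your $U(n)$-invariant-theory step never gets off the ground. This is not a removable technicality: the same sign obstruction applies to $(S^2T^*M\otimes TM)_0$ (slope $+\mu(T^*M)>0$), which is exactly why the Mok--Yeung/Klingler vanishing is a hard theorem specific to locally symmetric spaces rather than a slope/Bochner argument. Note also that the signs explain the asymmetry you glossed over: on the Fano model these weight-$(-2)$ bundles are negative and your method works, on the general-type model they are positive and it fails.

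The fix is already contained in your own first paragraph. You observe that holomorphic projective connections on $M$ form an affine space over $H^0(M,E)$ with $E=(S^2T^*M\otimes TM)/T^*M\cong (S^2T^*M\otimes TM)_0$; this is the paper's Lemma \ref{lemma}. What \cite{MY} (Section 3) and \cite{Kl1} (Proposition 4.10) actually prove is precisely $H^0(M,(S^2T^*M\otimes TM)_0)=0$ on compact ball quotients. Combining the two gives at once that $M$ carries \emph{at most one} holomorphic projective connection -- necessarily the standard flat one -- with no separate flatness argument and no new vanishing theorem needed. By citing \cite{MY,Kl1} only as rigidity among \emph{flat} structures, you force yourself to prove flatness first, via a vanishing for ${\mathcal W}$ that is not in the literature and that your Bochner sketch cannot deliver.
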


In particular, a holomorphic projective connection $\phi$ on a compact K\"ahler--Einstein manifold of complex 
dimension $n$ is either flat, or it is locally isomorphic to the projective connection induced by a translation 
invariant holomorphic affine connection on ${\mathbb C}^n$. In both cases, $\phi$ is locally homogeneous; 
more precisely, the local  projective Killing Lie algebra of the projective connection $\phi$ (see Section \ref{section 2}
for definition) contains a copy of the abelian Lie algebra ${\mathbb C}^n$ which is transitive on $M$.
 
The third case in Theorem \ref{classification} covers all compact K\"ahler manifolds with vanishing 
first Chern class (see Proposition \ref{abelian}). Indeed, on a compact  K\"ahler manifold $M$ with vanishing first Chern 
class, any holomorphic projective connection admits a global representative which is a holomorphic torsionfree affine 
connection (see Lemma \ref{trivial class}). In this case it is known that $M$ admits a finite unramified cover 
which is a compact complex torus \cite{IKO} (the pull-back, to the torus, of
such a global representative affine
connection is a translation invariant holomorphic torsionfree affine connection). This type of results
are also 
valid in the broader context of holomorphic Cartan geometries \cite{BM1,BM2,Du2, BD4}; see
\cite{Sh} for holomorphic Cartan geometries.

Kobayashi and Ochiai proved in \cite{KO3} that holomorphic $G$--structures modeled on Hermitian symmetric spaces 
of rank $\,\geq\, 2$ on compact K\"ahler--Einstein manifolds are always flat (see also \cite{HM} for a similar result for  uniruled projective manifolds). The complex projective space being a 
Hermitian space of rank one, holomorphic projective connections constitute examples of holomorphic $G$--structures 
modeled on a Hermitian symmetric space of rank one. Theorem \ref{classification} may be seen as a rank one 
version of the above mentioned
result in \cite{KO3}. However, contrary to the situation in rank $\,\geq\, 2$, there exist non-flat 
holomorphic projective connections on compact complex tori of dimension three or more (see Proposition 
\ref{abelian}).

Complex projective threefolds admitting a holomorphic projective connection were classified by Jahnke and Radloff 
in \cite{JR1}. Their result says that any such projective threefold is
\begin{itemize}
\item either one among the standard ones (the complex projective space 
${\mathbb C}{\mathbb P}^3$, \'etale quotients of abelian threefolds and compact quotients of the complex hyperbolic 
$3$-space ${\rm \mathbb H}_{\mathbb C}^3$ by a torsion-free discrete subgroup in ${\rm SU}(3,1)$),

\item or an \'etale 
quotient of a Kuga--Shimura projective threefold (i.e., a smooth modular family of false elliptic curves; their
description is recalled in Section \ref{section 2}).
\end{itemize}
As noted in \cite{JR1}, each of these projective threefolds also admit a flat holomorphic projective connection.

We investigate the space of all holomorphic projective connections on Kuga--Shimura projective threefolds.
The main result in this direction is the following (proved in Section \ref{ptc}):

\begin{theorem}\label{main}
Let $M \,\longrightarrow\, \Sigma$ be a Kuga--Shimura projective threefold over a
Shimura curve $\Sigma$ of false elliptic curves. Then the following hold:
\begin{enumerate}
\item[(i)] The projective equivalence classes of holomorphic projective connections on $M$ are parametrized by
a complex affine space for the complex vector space $(\rm{H}^0(\Sigma,\, K_{\Sigma}^{\frac{3}{2}}))^2$. 

\item[(ii)] All holomorphic projective connections on $M$ are flat. The fibers of the Kuga--Shimura
fibration are totally geodesic with respect to every holomorphic projective connection on $M$.
\end{enumerate}
\end{theorem}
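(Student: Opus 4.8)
The plan is to reduce both statements to a single cohomological computation on the Kuga--Shimura fibration $\pi\colon M\to\Sigma$. First I would fix the flat projective connection $\phi_0$ provided by Jahnke--Radloff and use the standard affine-space description: every holomorphic projective connection is $\phi_0+S$ for a unique global holomorphic section $S$ of the trace-free bundle of difference tensors
\[
\mathcal Q\;=\;\ker\big(\mathrm{Sym}^2\Omega^1_M\otimes T_M\xrightarrow{\ \mathrm{tr}\ }\Omega^1_M\big),
\]
where $\mathrm{tr}$ contracts the $T_M$ factor against one $\Omega^1_M$ factor (this quotients torsion-free affine connections by projective equivalence). Thus part~(i) becomes the computation of $H^0(M,\mathcal Q)$, and part~(ii) becomes an analysis of which tensors actually occur.

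The decisive geometric input I would establish is the Hodge-theoretic shape of the fibration. The fibers are false elliptic curves, i.e.\ abelian surfaces with quaternionic multiplication, so the weight-one variation of Hodge structure on $R^1\pi_*\mathbb C$ splits under the quaternion action as $\mathbb U\otimes\mathbb S$, where $\mathbb S$ is the uniformizing rank-two variation of the Shimura curve and $\mathbb U\cong\mathcal O_\Sigma^{\oplus2}$ is the holomorphically trivial multiplicity local system on which the uniformizing group acts trivially. Since the Kodaira--Spencer map of $\mathbb S$ is an isomorphism, its Hodge line bundle satisfies $\mathbb S^{1,0}\cong K_\Sigma^{1/2}$, whence $\Omega^1_{M/\Sigma}\cong\pi^*(K_\Sigma^{1/2})^{\oplus2}$ and $T_{M/\Sigma}\cong\pi^*(K_\Sigma^{-1/2})^{\oplus2}$. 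Combined with the relative (co)tangent sequences, every graded piece of $\Omega^1_M$ and $T_M$ for the fibration filtration is then a pullback from $\Sigma$ of a tensor power of $K_\Sigma^{1/2}$.

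With this in hand I would filter $\mathcal Q$ by the fibration and compute $H^0$ using $\pi_*\mathcal O_M=\mathcal O_\Sigma$. For the lower bound, note that $\mathrm{Sym}^2(\pi^*\Omega^1_\Sigma)\otimes T_{M/\Sigma}$ is a genuine \emph{sub}-bundle of $\mathcal Q$ (it is trace-free because $T_{M/\Sigma}$ annihilates $\pi^*\Omega^1_\Sigma$), and it equals $\pi^*(K_\Sigma^{3/2})^{\oplus2}$; its sections already exhibit the affine space over $(H^0(\Sigma,K_\Sigma^{3/2}))^2$. For the upper bound I would bound $h^0(M,\mathcal Q)$ by the sum of the $h^0$ of the graded pieces, which are pullbacks of bundles $K_\Sigma^{w}\otimes(\text{flat})$ with $w\in\{0,\tfrac12,1,\tfrac32\}$. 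The weight $w=\tfrac32$ pieces are exactly the sub-bundle above; the fractional "wrong-weight" pieces vanish because $K_\Sigma^{1/2}$ is a non-effective theta characteristic; and --- this is the \textbf{main obstacle} --- the spurious graded sections in weights $0$ and $1$ must be shown \emph{not to lift} to honest sections of $\mathcal Q$. I expect to eliminate these by showing that the connecting homomorphisms in the filtration's long exact sequences are injective on them, a non-splitting of $\Omega^1_M$ that is precisely encoded by the Kodaira--Spencer map of $\mathbb S$ being an isomorphism (equivalently $\mathbb S^{1,0}\otimes\mathbb S^{1,0}\cong K_\Sigma$). This forces $H^0(M,\mathcal Q)=(H^0(\Sigma,K_\Sigma^{3/2}))^2$.

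Part~(ii) would then follow from the shape of the surviving tensors. Every projective connection is $\phi_0+S$ with $S\in H^0(M,\mathrm{Sym}^2(\pi^*\Omega^1_\Sigma)\otimes T_{M/\Sigma})$, so $S$ accepts only base covectors and outputs vertical vectors; in particular $S$ vanishes on $T_{M/\Sigma}$. Hence for a curve tangent to a fiber the geodesic equation is unaffected by $S$, and since the fibers are totally geodesic for $\phi_0$ (they are carried to projective linear subspaces by its developing map) they remain totally geodesic for every $\phi_0+S$. For flatness I would compute the projective Weyl curvature of $\phi_0+S$: its quadratic-in-$S$ term requires contracting a vertical output of $S$ against a base covector of $S$ and therefore vanishes identically, while the derivative term $\nabla^{\phi_0}S$ is governed by the same fibration bookkeeping and reduces to the curvature of a projective structure on the curve $\Sigma$, which is automatically flat in dimension one. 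Thus the curvature of $\phi_0+S$ equals that of $\phi_0$, namely zero, and all holomorphic projective connections on $M$ are flat.
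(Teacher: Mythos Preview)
Your overall architecture is reasonable and genuinely different from the paper's: you filter $\mathcal Q$ by the fibration and use the Hodge decomposition of the quaternionic VHS, whereas the paper works explicitly on the universal cover $\widetilde M=\mathcal H\times\mathbb C^2$, writes a general section of $S^2(T^*\widetilde M)\otimes T\widetilde M$ as eighteen Christoffel symbols $f^k_{ij}(\tau,z_1,z_2)$, and imposes invariance under the semidirect product $\Gamma\ltimes\Lambda$ term by term. The paper's computation is lower-tech but completely explicit; yours is more conceptual but defers the hard step.

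There is one concrete gap. You dispose of the weight-$\tfrac12$ graded pieces by asserting that $K_\Sigma^{1/2}$ is a \emph{non-effective} theta characteristic. This is neither proved nor, in fact, needed. In the paper's computation the weight-$\tfrac12$ coefficients such as $f^{z_1}_{z_1,z_2}$ are initially allowed to be arbitrary sections of $K_\Sigma^{1/2}$; what kills them is the interaction between the $\Gamma$-action and the $\Lambda$-translations in the coefficient of $d\tau\otimes d\tau\otimes\partial_\tau$, which forces $f^{\tau}_{z_i,\tau}=0$ and then, via earlier identities, $f^{z_1}_{z_1,z_2}=f^{\tau}_{z_2,\tau}=0$. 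In your language this is exactly the connecting-map mechanism you invoke for weights $0$ and $1$: the non-splitting of $0\to\pi^*\Omega^1_\Sigma\to\Omega^1_M\to\Omega^1_{M/\Sigma}\to0$, governed by the Kodaira--Spencer isomorphism, obstructs lifting these graded sections. So you should not separate the weight-$\tfrac12$ case from your ``main obstacle''; all the spurious graded sections (weights $0$, $\tfrac12$, $1$) must be killed by the same extension-class argument, and that argument has to be carried out, not just expected. As written, the proposal rests on an unjustified vanishing of $H^0(\Sigma,K_\Sigma^{1/2})$.

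For part~(ii) your reasoning is essentially correct but the phrasing is off. The quadratic-in-$S$ contribution to the curvature indeed vanishes because $S$ outputs vertical vectors while accepting only horizontal ones. For the linear term, however, it does not ``reduce to the curvature of a projective structure on $\Sigma$'': $S$ takes values in $T_{M/\Sigma}$, not in $\pi^*T_\Sigma$. The clean argument, which is what the paper uses, is that the Christoffel symbols of $S$ depend only on $\tau$, so derivative terms in $R(X,Y)Z$ can only arise when one of $X,Y$ equals $\partial_\tau$; but $S$ vanishes whenever any argument is vertical, and antisymmetry in $(X,Y)$ then forces the derivative contribution to zero. Hence the full curvature, and a fortiori the Weyl tensor, of $\phi_0+S$ vanishes. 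The totally geodesic statement follows exactly as you say.
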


Theorem \ref{main} implies that the space of projective equivalence classes of flat holomorphic projective 
connections on Kuga--Shimura projective threefolds can have arbitrarily large dimension (see Remark \ref{KS}).

Theorem \ref{classification} and Theorem \ref{main}, combined with the 
classification in \cite{JR1}, give the following (proved in Section \ref{ptc}):

\begin{corollary}\label{cor Lich}
A holomorphic projective connection $\phi$ on a complex projective threefold is either flat, or
it is an \'etale quotient of a translation 
invariant holomorphic projective connection on an abelian threefold. In the second case, a generic translation
invariant holomorphic projective connection on an abelian variety of dimension three is not flat.
\end{corollary}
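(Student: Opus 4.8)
The plan is to run through the Jahnke--Radloff classification of \cite{JR1} case by case, invoking Theorem \ref{classification} for the standard (K\"ahler--Einstein) threefolds and Theorem \ref{main} for the Kuga--Shimura ones, and then using that flatness of a projective connection is a local condition, hence unaffected by passage to and from \'etale covers. By \cite{JR1} the only complex projective threefolds carrying a holomorphic projective connection are ${\mathbb C}{\mathbb P}^3$, the compact quotients of ${\rm \mathbb H}_{\mathbb C}^3$ by a torsion-free discrete subgroup of ${\rm SU}(3,1)$, the \'etale quotients of abelian threefolds, and the \'etale quotients of Kuga--Shimura projective threefolds, so it suffices to settle each of these four types.

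First I would dispose of the standard cases. The space ${\mathbb C}{\mathbb P}^3$, the quotients of ${\rm \mathbb H}_{\mathbb C}^3$, and the \'etale quotients of abelian threefolds all carry K\"ahler--Einstein metrics (of positive, negative, and zero sign respectively), so Theorem \ref{classification} applies to each of them. For the first two types, parts (1) and (2) of Theorem \ref{classification} identify the given connection $\phi$ with the standard flat projective connection, so $\phi$ is flat. For an \'etale quotient $M$ of an abelian threefold $A \,=\, {\mathbb C}^3/\Lambda$, part (3) shows that $\phi$ is induced by a translation invariant holomorphic torsionfree affine connection on the universal cover ${\mathbb C}^3$; pulling $\phi$ back along the \'etale covering $A \,\longrightarrow\, M$ therefore exhibits it as a translation invariant holomorphic projective connection on $A$. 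This is exactly the second alternative in the statement of the corollary.

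Next I would treat the Kuga--Shimura case. If $M$ is an \'etale quotient of a Kuga--Shimura projective threefold $\widetilde M \,\longrightarrow\, \Sigma$, then any holomorphic projective connection $\phi$ on $M$ pulls back to a holomorphic projective connection $\widetilde\phi$ on $\widetilde M$. By Theorem \ref{main}(ii), $\widetilde\phi$ is flat. Since the covering $\widetilde M \,\longrightarrow\, M$ is a local biholomorphism and flatness is a purely local property, $\phi$ is flat as well. Combining the four cases, $\phi$ is flat unless $M$ is an \'etale quotient of an abelian threefold, in which case $\phi$ descends from a translation invariant projective connection, as claimed.

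Finally, the genericity assertion of the second alternative is immediate from the last sentence of Theorem \ref{classification}(3): taking $n \,=\, 3$, the general translation invariant holomorphic torsionfree affine connection on ${\mathbb C}^3$ is not projectively flat, so the induced translation invariant projective connection on a generic abelian threefold of dimension three is not flat. I do not anticipate a serious obstacle, since the corollary is essentially a bookkeeping synthesis of the preceding theorems; the only points requiring care are verifying that the K\"ahler--Einstein hypothesis of Theorem \ref{classification} genuinely covers all three standard types of \cite{JR1} (it does, as each admits a K\"ahler--Einstein metric of the appropriate sign), and checking that flatness transfers correctly across the \'etale quotients in both directions (it does, being a local notion).
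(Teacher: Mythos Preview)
Your proposal is correct and follows essentially the same approach as the paper: invoke the Jahnke--Radloff classification \cite{JR1}, apply Theorem \ref{classification} to the three standard (K\"ahler--Einstein) types and Theorem \ref{main} to the Kuga--Shimura type, and use that flatness is local to pass to \'etale quotients. The paper's own proof is a terse one-paragraph version of exactly this case split; your write-up simply makes the passage through \'etale covers more explicit.
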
 

Our motivation for Corollary \ref{cor Lich} comes from the projective Lichnerowicz conjecture. The projective 
Lichnerowicz conjecture roughly says that compact manifolds $M$ endowed with a projective connection $\phi$ 
admitting a connected (or, more generally, infinite) {\it essential}  group $G$ of automorphisms of $(M, \,\phi)$ 
(meaning, $G$ preserves $\phi$, but does not preserve any torsionfree affine connection representing $\phi$) are actually flat 
(i.e., $\phi$ is a flat projective connection). The literature on this subject is vast: see for instance \cite{Ma}, 
\cite{Ze} and references therein. In \cite{Ma}, the projective Lichnerowicz conjecture was solved in the Riemannian 
context (i.e., for the Levi--Civita connection of a Riemannian metric) and for connected  essential groups of 
projective automorphisms $G$; in \cite{Ze}, the same was proved for discrete infinite essential groups of 
projective automorphisms $G$. For local results in this direction, see, for instance, Theorem 3.1 in \cite{CM} 
which implies that analytic  projective connections admitting an essential local  projective Killing 
field are flat (compare this with \cite{NO}).
 
We formulate here a version of the projective Lichnerowicz conjecture for holomorphic pseudo-groups; there is no 
global transformation group in our formulation; instead we replace it with the pseudo-group of local 
biholomorphisms which are the transition maps of a compact complex manifold endowed with a holomorphic projective 
connection.

\begin{conjecture}\label{con1}
Let $M$ be a compact complex manifold bearing a holomorphic projective connection $\phi$. Assume that $M$ does 
not admit any global holomorphic torsionfree affine connection projectively equivalent to $\phi$. Then $\phi$ is flat.
\end{conjecture}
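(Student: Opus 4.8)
Since this is stated as a conjecture, the plan is to outline a strategy: one that is already unconditional wherever a classification is available, and that reduces the general case to a symmetry-rigidity input. The starting point is to reinterpret the essentiality hypothesis cohomologically. Locally $\phi$ is represented by holomorphic torsionfree affine connections, and two local representatives differ by a term $\nabla' - \nabla \,=\, \alpha\otimes\mathrm{Id} + \mathrm{Id}\otimes\alpha$ (suitably symmetrized) for a holomorphic $1$-form $\alpha$; the transition forms assemble into a class $\theta(\phi)\,\in\, H^1(M,\,\Omega^1_M)$ whose vanishing is equivalent to the existence of a global torsionfree affine representative of $\phi$. A global holomorphic affine connection would induce a holomorphic connection on $K_M$, forcing $c_1(M)\,=\,0$ by Atiyah's obstruction; the trace of $\theta(\phi)$ recovers a nonzero multiple of $c_1(M)$, so $c_1(M)\,\neq\, 0$ already implies $\theta(\phi)\,\neq\, 0$ and hence essentiality. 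Conversely, when $M$ is K\"ahler with $c_1(M)\,=\,0$, Lemma \ref{trivial class} produces a global affine representative, so $\theta(\phi)\,=\,0$. Thus the hypothesis of the conjecture forces $\theta(\phi)\,\neq\,0$, and in the K\"ahler setting forces $c_1(M)\,\neq\, 0$; in particular it excludes the torus case of Theorem \ref{classification}, which is exactly where non-flat holomorphic projective connections live (Proposition \ref{abelian}).

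Granting this reformulation, the conjecture is already a theorem wherever the underlying manifold is classified. If $M$ is compact K\"ahler--Einstein, then Theorem \ref{classification} leaves, after removing the non-essential torus case, only $\mathbb{CP}^n$ and quotients of $\mathbb{H}^n_{\mathbb C}$; both carry the standard flat projective connection and both have $c_1(M)\,\neq\, 0$, so $\phi$ is essential and flat, as required. If $M$ is a complex projective threefold, Corollary \ref{cor Lich} says that $\phi$ is either flat or an \'etale quotient of a translation invariant connection on an abelian threefold; the latter possibility admits the translation invariant affine representative and is therefore not essential, so essentiality again forces flatness (for Kuga--Shimura threefolds this is also immediate from Theorem \ref{main}). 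The open content of the conjecture is thus concentrated in the general compact complex case, where no classification is available.

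For the general case the mechanism I would pursue is to produce an essential local projective Killing field and then invoke Theorem 3.1 of \cite{CM}, by which an analytic projective connection admitting an essential local projective Killing field is flat. A holomorphic projective connection is a holomorphic Cartan geometry modeled on $\mathbb{CP}^n \,=\, \mathrm{PGL}(n+1,\CC)/P$, hence a rigid holomorphic geometric structure; by Gromov's theory such a structure on a compact manifold has a large local Killing algebra on a dense open subset. The plan is to show that the nonvanishing of the global obstruction $\theta(\phi)$ forces at least one of these local Killing fields to be essential, i.e. to move the local affine representatives, after which flatness follows from \cite{CM}. This would mirror, in the holomorphic pseudo-group setting, the mechanism behind the solutions of the projective Lichnerowicz conjecture in \cite{Ma, Ze}.

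The main obstacle, and the reason the statement remains conjectural, is the gap between the two notions of essentiality. The hypothesis is cohomological — the global obstruction $\theta(\phi)$ is nonzero — whereas \cite{CM} requires a dynamical essential symmetry, namely a local projective Killing field preserving no local affine representative. There is no formal implication from the nonvanishing of a global cohomology class to the existence of such a local symmetry: the Killing fields supplied by Gromov's theory need not be essential, and controlling their interaction with $\theta(\phi)$ appears to require genuinely new input. Compounding this, in the non-K\"ahler case one loses the Hodge-theoretic and Bochner-type tools that render the K\"ahler--Einstein and projective cases tractable, so even identifying the correct intermediate structures on such $M$ is part of the difficulty.
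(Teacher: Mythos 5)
You have not proved the statement, but neither does the paper: \ref{con1} is stated as a conjecture, and the paper's only contributions to it are (a) the reformulation via Lemma \ref{trivial class} (the hypothesis is equivalent to $K_M$ admitting no holomorphic connection, hence to $c_1(M)\,\neq\,0$ when $M$ is K\"ahler, which is how Conjecture \ref{con1} collapses to Conjecture \ref{con2}), and (b) the verification in the two classified situations, Theorem \ref{classification} and Corollary \ref{cor Lich}. Your first two paragraphs reproduce exactly this, correctly: in both classifications the only non-flat possibilities are \'etale quotients of tori/abelian varieties, which have $c_1\,=\,0$ and therefore carry global torsionfree affine representatives, so the essentiality hypothesis excludes them. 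One small refinement: the class you call $\theta(\phi)$ is in fact independent of $\phi$ --- it is (a multiple of) the Atiyah class of $K_M$, i.e., the cocycle $d\log\Delta_{ij}$ of \cite{KO} --- so the hypothesis of the conjecture is a property of $M$ alone, which is precisely why the paper can restate it as ``$K_M$ admits no holomorphic connection.''

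The genuine gap is in your third paragraph, and it is larger than the one you acknowledge. Beyond the (real) missing bridge from cohomological essentiality to a dynamically essential Killing field in the sense of \cite{CM}, your starting point is false: Gromov's rigidity theory does \emph{not} provide ``a large local Killing algebra on a dense open subset'' of a compact manifold. Rigidity of the structure gives finite-dimensionality of the local Killing algebra and the extension and open--dense theorems \cite{Gr,DG}; it produces no nontrivial Killing field whatsoever (generic rigid structures, e.g.\ generic Riemannian metrics, have trivial Killing algebra). Where this paper does produce Killing fields --- Propositions \ref{Killing} and \ref{Killing2}(i) --- it does so not from rigidity alone but from Dumitrescu's bound \cite[Theorem 3]{Du} relating generic Killing-orbit dimension to $\dim M - a(M)$, and it must combine that with the Jahnke--Radloff classification (through Corollary \ref{cor Lich}) and $\dim_{\mathbb C} M\,=\,3$ to rule out the Moishezon case. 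Note moreover that those propositions operate under ``$K_M$ trivial,'' i.e.\ precisely in the regime where a global affine representative exists and the hypothesis of Conjecture \ref{con1} \emph{fails}; the paper's Killing-field technology lives in the regime complementary to the one your strategy needs. So as it stands your plan has no mechanism to produce even one nontrivial local Killing field under the conjecture's hypothesis, let alone an essential one; repairing that --- either by a new existence theorem for Killing fields without algebraic-dimension hypotheses, or by a direct argument from the nonvanishing Atiyah class of $K_M$ --- is the actual open problem, consistent with the statement remaining a conjecture.
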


Lemma \ref{trivial class} shows that the assumption in
Conjecture \ref{con1} is equivalent to the assumption
that the canonical line bundle $K_M$ does not admit any holomorphic connection. Moreover, if $M$ is 
K\"ahler, this assumption is equivalent to the assumption that $c_1(M)\, \not=\, 0$.
So, in the K\"ahler setting, Conjecture \ref{con1} simplifies to the following:
 
\begin{conjecture}\label{con2}
Holomorphic projective connections on compact K\"ahler manifolds with nonzero first Chern class are flat.
\end{conjecture}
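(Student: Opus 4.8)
The plan is to reduce Conjecture~\ref{con2} to Theorem~\ref{classification} by upgrading the K\"ahler hypothesis to a K\"ahler--Einstein one. The starting point is Lemma~\ref{trivial class}: the hypothesis $c_1(M)\,\neq\, 0$ is equivalent to $K_M$ carrying no holomorphic connection, hence to $\phi$ admitting no global torsionfree affine representative. This is the feature we must exploit, since the existence of such a representative is exactly what places us in the translation-invariant regime of case~(3) of Theorem~\ref{classification}, where $c_1$ vanishes.

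First I would try to show that a compact K\"ahler manifold $M$ with $c_1(M)\,\neq\,0$ carrying a holomorphic projective connection $\phi$ admits a K\"ahler--Einstein metric. Granting this, Theorem~\ref{classification} forces $M$ to be one of the three standard models; case~(3) has $c_1\,=\,0$ and is excluded by hypothesis, so $M$ is either ${\mathbb C}{\mathbb P}^n$ or a quotient of the complex hyperbolic space, and in both cases the normality of $\phi$ identifies it with the standard flat projective connection. This yields flatness.

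To obtain the K\"ahler--Einstein metric I would argue according to the sign of $c_1(M)$. If $K_M$ is ample, i.e.\ $c_1(M)\,<\,0$, the Aubin--Yau theorem supplies a K\"ahler--Einstein metric of negative scalar curvature and the argument above closes this branch. If $-K_M$ is ample, i.e.\ $M$ is Fano, the existence of a K\"ahler--Einstein metric is only conditional, so instead I would attempt to prove directly that a Fano manifold carrying a holomorphic projective connection is biholomorphic to ${\mathbb C}{\mathbb P}^n$---the projective analogue of the rigidity statements of Mok--Yeung \cite{MY} and Kobayashi--Ochiai \cite{KO3} recalled in the introduction---and then to invoke uniqueness of the flat structure on ${\mathbb C}{\mathbb P}^n$.

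The main obstacle is the intermediate regime in which $c_1(M)\,\neq\,0$ is of indefinite sign, so that neither the Aubin--Yau theorem nor Fano rigidity provides a canonical metric to feed into Theorem~\ref{classification}. Here I would work directly with the curvature of $\phi$, using that the normality condition expresses the trace of any \emph{local} torsionfree affine representative through the Ricci curvature; the global obstruction to patching these local representatives is measured precisely by $c_1(M)$, and one hopes to convert its nonvanishing into the vanishing of the projective Weyl curvature via a Bochner-type integral identity. Making such an argument unconditional---free of any K\"ahler--Einstein input and valid for indefinite $c_1$---is the step I expect to resist a direct attack, and is the reason the statement is posed as a conjecture rather than a theorem.
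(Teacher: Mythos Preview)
The statement you are attempting is Conjecture~\ref{con2}, which the paper explicitly leaves open; there is no proof in the paper to compare your proposal against. The paper verifies the conjecture only in two special settings---compact K\"ahler--Einstein manifolds (Theorem~\ref{classification}) and complex projective threefolds (Corollary~\ref{cor Lich})---and says so immediately after posing the conjecture. Your proposal is therefore not a proof but a strategy outline, and you acknowledge as much in your final paragraph.

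Within your outline, the Fano branch in fact needs no new argument: Ye's theorem, recalled in the paper and sharpened in Proposition~\ref{uniruled}, already shows that any projective manifold containing a rational curve and bearing a holomorphic projective connection is ${\mathbb C}{\mathbb P}^n$ with its standard flat structure. The $K_M$ ample branch is likewise handled by Aubin--Yau together with Theorem~\ref{classification}, as you indicate. The genuine obstruction is precisely the one you name: the regime where $c_1(M)\,\neq\,0$ yet neither $K_M$ nor $-K_M$ is ample (or even nef). This is not a boundary case---the nonvanishing of $c_1(M)$ in cohomology imposes no positivity whatsoever on $K_M$---so the bulk of the conjecture sits here. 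Your suggested Bochner-type identity linking $c_1$ to the projective Weyl curvature is speculative; no such mechanism is developed in the paper, and beyond the projective-threefold classification of \cite{JR1} the paper offers no approach to this case.
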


Theorem \ref{classification} gives a positive answer to Conjecture \ref{con2} for 
K\"ahler--Einstein manifolds, while Corollary \ref{cor Lich} gives a positive answer to 
Conjecture \ref{con2} for projective threefolds.
 
All simply connected K\"ahler manifolds, and, more generally, all simply connected manifolds in the 
Fujiki class $\mathcal C$ \cite{Fu} (i.e., compact complex manifolds bimeromorphic to a K\"ahler 
manifold \cite{Va}), bearing a holomorphic projective connection are actually complex projective 
manifold \cite[Theorem 4.3]{BD3}. In view of this, Corollary \ref{cor Lich} also gives a positive answer to 
Conjecture \ref{con1} for simply connected threefolds belonging to the Fujiki class $\mathcal C$. 
More precisely, a compact simply connected complex threefold in the Fujiki class $\mathcal C$ 
equipped with a holomorphic projective connection is isomorphic to ${\mathbb C}{\mathbb P}^3$ endowed 
with its standard flat projective connection.

For higher dimensions, a classification of complex projective manifolds admitting a flat holomorphic projective 
connection was obtained in \cite{JR2}. The classification of complex projective manifolds admitting a holomorphic 
projective connection (non necessarily flat) is still an open question. Notice that Conjecture \ref{con2} implies 
that compact K\"ahler manifolds bearing a holomorphic projective connection also admit flat holomorphic projective 
connections (all holomorphic projective connections are actually expected to be  flat, except the \'etale quotients of generic 
translation-invariant projective connections on compact complex tori).

An interesting class of compact non-K\"ahler threefolds with trivial canonical bundle and admitting a flat 
holomorphic projective connection is provided by parallelizable manifolds ${\rm SL}(2, \mathbb C) /\Gamma$, where 
$\Gamma$ is a cocompact lattice in ${\rm SL}(2, \mathbb C)$, along with the deformations of ${\rm SL}(2, \mathbb C) 
/\Gamma$ constructed in \cite{Gh} (which are, in general, not parallelizable manifolds). The details about the 
geometry of these projective connections can be found in {\cite{Gh} and \cite[Section 5]{BD}. It should be 
mentioned that compact complex non-K\"ahler parallelizable manifolds admitting a holomorphic projective connection, 
but not admitting any flat holomorphic projective connection, were constructed in \cite[Proposition 5.7]{BD}.

Section \ref{section 2} provides an introduction to the geometry of holomorphic projective 
connections as well as presentations of the standard models and the Kuga--Shimura threefolds. In 
Section \ref{section 3} we study holomorphic projective connections on K\"ahler--Einstein manifolds.
Section \ref{section 4} is about holomorphic projective 
connections on Kuga--Shimura manifolds, and contains proofs of Theorem \ref{main} and Corollary \ref{cor Lich}.

 Section \ref{section 5} deals with the compact non-K\"ahler threefolds, and the following theorem is proved
there (see Theorem \ref{thm sc}).

\begin{theorem}\label{thml}
A simply connected compact complex threefold with trivial canonical line bundle 
does not admit any holomorphic projective connection
\end{theorem}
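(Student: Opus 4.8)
The plan is to first replace the projective connection by an affine one, and then split the argument according to whether or not $M$ lies in the Fujiki class $\mathcal C$. Since $K_M$ is holomorphically trivial it carries a holomorphic (indeed flat) connection, so by Lemma \ref{trivial class} the given holomorphic projective connection $\phi$ admits a global representative which is a holomorphic torsionfree affine connection $\nabla$; equivalently, $TM$ carries a holomorphic connection. Fixing a nowhere-vanishing holomorphic $3$--form $\omega$ trivializing $K_M$, I would normalize $\nabla$ inside its projective class so that $\nabla\omega=0$, turning $(M,\nabla,\omega)$ into a holomorphic special affine (that is, $\mathrm{SL}(3,\CC)$) Cartan geometry. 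A first consequence, via Atiyah's theorem on the vanishing of the Chern classes of a bundle admitting a holomorphic connection, is that all Chern classes of $TM$ vanish in $H^{\bullet}(M,\CC)$; in particular $\int_M c_3(TM)=\chi_{\mathrm{top}}(M)=0$ and, by Riemann--Roch, $\chi(M,\mathcal O_M)=\frac{1}{24}\int_M c_1 c_2=0$.

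The case $M\in\mathcal C$ is disposed of by the results quoted in the introduction. By \cite[Theorem 4.3]{BD3} a simply connected manifold in the class $\mathcal C$ carrying a holomorphic projective connection is projective, hence Kähler. As $K_M$ is trivial, Yau's solution of the Calabi conjecture endows $M$ with a Ricci--flat Kähler metric, so $M$ is a compact Kähler--Einstein manifold of dimension $3>1$ equipped with a holomorphic projective connection. Theorem \ref{classification} then forces $M$ to be either $\CC\mathbb{P}^3$, or a quotient of complex hyperbolic space, or an étale quotient of a complex torus; the first two are excluded because $c_1(M)=0$ and $K_M$ is trivial (whereas the hyperbolic quotient has ample canonical bundle), while the third has infinite fundamental group. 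This contradicts $\pi_1(M)=\{1\}$, so $M$ cannot belong to $\mathcal C$.

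It remains to exclude the genuinely non-Kähler case $M\notin\mathcal C$, which I expect to be the main obstacle. The idea is to exploit the special affine geometry $(M,\nabla,\omega)$ together with the absence of a Kähler structure. If the holomorphic connection induced on $TM$ by $\nabla$ were flat, then, $M$ being simply connected, $TM$ would be holomorphically trivial and $M$ parallelizable; by Wang's theorem $M\cong G/\Gamma$ for a positive-dimensional complex Lie group $G$ and a cocompact lattice $\Gamma$, and such manifolds have infinite fundamental group, contradicting $\pi_1(M)=\{1\}$. The hard part is therefore the non-flat subcase, where the curvature of $\nabla$ is a nonzero holomorphic tensor: one must extract from this rigid holomorphic structure enough global integrability to again contradict simple connectedness. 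I would attack this through the Killing Lie algebra of $\nabla$ and Gromov-type local homogeneity of the rigid holomorphic structure, aiming to show that any such non-Kähler $M$ is covered by a complex torus or a parallelizable manifold, whence $\pi_1(M)$ is infinite; this structural step for non-Kähler threefolds with trivial canonical bundle is the crux of the proof.
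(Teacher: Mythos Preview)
Your case split is different from the paper's, and the portion you actually carry out is correct: the Fujiki-class argument via \cite{BD3} and Theorem~\ref{classification} works, as does the flat-$\nabla$ subcase (parallelizable, hence $G/\Gamma$, hence infinite $\pi_1$). But there is a genuine gap: the non-Fujiki, non-flat case is not proved. You only state a hope (``attack through the Killing Lie algebra and Gromov-type local homogeneity, aiming to show $M$ is covered by a torus or a parallelizable manifold''), and this is exactly where all the content lies. Moreover, the anticipated endgame is not what actually happens.

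The paper does not split along Fujiki/flat lines. It proves a structural result (Proposition~\ref{Killing2}) valid for any compact $M$ with $K_M$ trivial carrying $\phi$: using \cite[Theorem~3]{Du} on rigid holomorphic structures together with Gromov's abelianization trick, one obtains a maximal connected abelian subgroup $A\subset\mathrm{Aut}(M,\phi)$ of positive dimension whose orbits coincide with those of its maximal compact subgroup, are all compact complex tori, and the $A$-action has no fixed points. (The Moishezon possibility is absorbed here: trivial Killing algebra would force $M$ Moishezon, hence --- having no rational curves --- projective and covered by an abelian variety, a contradiction.) The threefold proof then branches on the generic $A$-orbit dimension. Dimension $3$ is excluded since $M$ is simply connected. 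For dimension $2$, contracting $\omega$ against two Killing fields produces a nontrivial closed holomorphic $1$-form, contradicting $H^1(M,\mathbb C)=0$. For dimension $1$, one shows $K(A)$ acts freely, so $M$ is a holomorphic principal elliptic-curve bundle over a simply connected surface with trivial canonical bundle --- a K3 surface --- and concludes by invoking \cite[Theorem~1.1 and Corollary~1.1]{BD3} (local homogeneity forces $\pi_1$ infinite). So in the one-dimensional-orbit case $M$ is \emph{not} shown to be covered by a torus or parallelizable manifold; the elliptic-over-K3 configuration needs a separate external input that your sketch does not provide.
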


A key ingredient in the proof of Theorem \ref{thml} is the result that the projective  Killing Lie algebra of a 
holomorphic projective connection on a compact complex threefold is nontrivial (see Proposition 
\ref{Killing}). We think that the statement proved in  Theorem \ref{thml} is likely to be true in higher dimensions. Our
Proposition \ref{Killing2} is a step in that direction.

More generally, we conjecture that a simply connected compact complex manifold bearing a holomorphic projective 
connection is isomorphic to the complex projective space (endowed with its standard flat structure); this is a 
version of Conjecture \ref{con1} for simply connected manifolds. We have seen above that this conjecture is verified 
for complex threefolds in Fujiki class $\mathcal C$, while Theorem \ref{thml} verifies it  for complex threefolds 
with trivial canonical bundle.

\section{Holomorphic projective connections}\label{section 2}

Recall that using the standard action of ${\rm PGL}(n+1, \mathbb C)$ on the complex projective space
${\mathbb C}{\mathbb P}^n$, the group of holomorphic automorphisms of
${\mathbb C}{\mathbb P}^n$ is identified with ${\rm PGL}(n+1, \mathbb C)$.

Let $M$ be a complex manifold of complex dimension $n$. A holomorphic coordinate function on $M$ is
a pair of the form $(U,\, \phi)$, where $U\, \subset\, M$ is an open subset and $\phi\, :\,
U\, \longrightarrow\, {\mathbb C}{\mathbb P}^n$ is a holomorphic embedding.
A {\it holomorphic projective 
structure}  on $M$ is given by a collection
of holomorphic coordinate functions $(U_i,\, \phi_i)_{i \in I}$ such that
\begin{itemize}
\item $\bigcup_{i \in I} U_i\,=\, M$, and

\item for $i,\, j\, \in\, I$, and each connected component $U_{ij,c}\, \subset\, U_i \cap U_j$, the transition
function $$\phi_i \circ \phi_j^{-1} \,: \, \phi_j(U_{ij,c}) \,\longrightarrow\, \phi_i(U_{ij,c})$$
coincides with the restriction of some $\phi_{ij,c} \,\in\, {\rm PGL}(n+1, \mathbb C)$.
\end{itemize}

An important consequence of the uniformization theorem for Riemann surfaces is that any Riemann surface admits 
a holomorphic projective structure \cite{Gu}.
In higher dimension the situation is much more stringent. All compact K\"ahler--Einstein manifolds admitting 
a holomorphic projective structure actually lie in one of the three standard examples described below \cite{KO,KO1}.

\subsection{The standard examples}

The complex projective space ${\mathbb C}{\mathbb P}^n$ is endowed with its standard holomorphic projective 
structure. This first of the three standard examples
is the model for any holomorphic projective structure in the following sense.

If $M$ is a complex simply connected manifold of complex dimension $n$, any holomorphic projective structure 
on $M$ is given by a holomorphic submersion (equivalently, immersion)
$${\rm dev}\,:\, M \,\longrightarrow\,
{\mathbb C}{\mathbb P}^n$$
which is known as the developing map. In 
particular, if $M$ is also compact, this ${\rm dev}$ is a covering map
and hence it is a biholomorphism, because ${\mathbb C}{\mathbb P}^n$ is simply 
connected. Therefore, the only compact simply connected complex $n$-manifold endowed with a 
holomorphic projective structure is ${\mathbb C}{\mathbb P}^n$ equipped with its standard projective structure.

Assume now that $M$ is endowed with a holomorphic projective structure $\phi$, but it is not simply connected 
anymore. Fix a point $x_0\, \in\, M$,
denote by $\varpi\, :\widetilde{M}\, \longrightarrow\, M$ the corresponding universal covering of $M$, and pull-back the holomorphic
projective structure $\phi$ to $\widetilde{M}$. Consider the developing map ${\rm dev}\,:\, \widetilde{M}
\,\longrightarrow\,{\mathbb C}{\mathbb P}^n$ for $\varpi^*\phi$. We have a unique homomorphism
$$
\rho\, :\, \pi_1(M,\, x_0)\, \longrightarrow\, {\rm PGL}(n+1, \mathbb C)
$$
such that ${\rm dev}$ is $\pi_1(M,\, x_0)$--equivariant for the natural action of
$\pi_1(M,\, x_0)$ on $\widetilde{M}$ and the action of $\pi_1(M,\, x_0)$ on
${\mathbb C}{\mathbb P}^n$ given by $\rho$ together with the standard action of ${\rm PGL}(n+1, \mathbb C)$
on ${\mathbb C}{\mathbb P}^n$; this $\rho$ is called the {\it monodromy homomorphism} for the
projective structure.

The projective structure on ${\mathbb C}{\mathbb P}^n$ induces a projective structure on every open 
subset of it. Take an open set $\Omega \,\subset\, {\mathbb C}{\mathbb P}^n$ and a discrete 
subgroup $\Gamma \,\subset\, {\rm PGL}(n+1, \mathbb C)$ that preserves $\Omega$ while acting freely 
and properly discontinuously on $\Omega$. Then the quotient complex manifold $\Omega / \Gamma$ 
inherits a holomorphic projective structure induced by that of $\Omega$.

The remaining two standard examples will be described by choosing appropriately the pair $(\Omega,\, \Gamma)$. 

\textit{Complex affine space and its quotients.}\, Take $\Omega$ to be the standard open affine subset
${\mathbb C}^n\, \subset\, {\mathbb C}{\mathbb P}^n$. 
Let $\Lambda \,\simeq\,{ \mathbb Z}^{2n}$ be some lattice in ${\mathbb R}^{2n}$; it acts on 
${\mathbb C}^n$ by translations. Since all affine transformations of ${\mathbb C}^n$ are restrictions of 
projective transformations, the complex compact torus ${\mathbb C}^n / \Lambda$ inherits a holomorphic 
projective structure.

\textit{Complex hyperbolic space and its quotients.}\, Let us now consider the complex hyperbolic space ${\rm 
\mathbb H}_{\mathbb C}^n$ of complex dimension $n$, seen as the Hermitian symmetric space ${\rm SU}(n,1)/{\rm S} 
({\rm U}(n,1) \times {\rm U}(1))$. The group ${\rm SU}(n,1)$ coincides with the group of holomorphic isometries of 
${\rm \mathbb H}_{\mathbb C}^n$. The compact dual of ${\rm \mathbb H}_{\mathbb C}^n$ is ${\mathbb C}{\mathbb P}^n$ 
acted on by the holomorphic isometry group ${\rm PU}(n+1)$ for its standard Fubini--Study K\"ahler metric. There is 
a canonical (Borel) embedding of ${\rm \mathbb H}_{\mathbb C}^n$ as an open subset of its compact dual ${\mathbb 
C}{\mathbb P}^n$. The image of this Borel embedding is the following ball in ${\mathbb C}{\mathbb P}^n$:
\begin{equation}\label{s1}
{\rm \mathbb H}_{\mathbb C}^n\, :=\, \{[Z_0 : Z_1 : \cdots : Z_{n}] \, \mid\, |Z_0|^2+ |Z_1|^2+ \ldots +
|Z_{n-1}|^2\,<\, |Z_n|^2\}\, \subset\,{\mathbb C}{\mathbb P}^n\, .
\end{equation}
The action of ${\rm SU}(n,1)$ on ${\rm \mathbb H}_{\mathbb C}^n$ evidently extends to an action of
${\rm SU}(n,1)$ on ${\mathbb C}{\mathbb P}^n$ by projective transformations. Therefore any quotient of
${\rm \mathbb H}_{\mathbb C}^n$ by a torsion-free discrete subgroup in ${\rm SU}(n,1)$ is a complex
manifold endowed with a holomorphic projective structure induced by the natural
holomorphic projective structure on the open subset
${\rm \mathbb H}_{\mathbb C}^n\, \subset\, {\mathbb C}{\mathbb P}^n$ in \eqref{s1}.

\subsection{False elliptic curves and Shimura curve}\label{ss Shimura} 

The main Theorem in \cite{JR1} asserts that any complex projective threefold bearing a holomorphic projective structure (or, more generally, a  holomorphic 
projective connection in the sense of Section  \ref{section  projective connection}) is
\begin{itemize}
\item either one of the above (three-dimensional) standard examples,

\item or an \'etale quotient of a smooth modular family of false elliptic curves.
\end{itemize}
We present here a construction of these compact Shimura curves of false elliptic curves, following 
the description in \cite{LB} and \cite{JR1}.

Let $\mathcal B$ be a totally indefinite quaternion algebra over $\mathbb Q$. More precisely, 
$\mathcal B$ is the algebra generated by two elements $i,j $ such that $$ij\,=\,-ji,\ \ 
i^2\,=\,a,\ \ j^2\, =\, b$$ for some $a,\,b\,\in\, \mathbb Q$ which are not both negative. Then $\mathcal B$ is a division 
algebra, and $$\mathcal B \otimes_{\mathbb Q} {\mathbb R}\,\simeq\, M_{2,2}(\mathbb R)\, .$$ 
Therefore elements of $\mathcal B$ may be seen as $(2,\,2)$-matrices with coefficients in some real 
quadratic number field.

A {\it false elliptic curve} is an abelian surface $T$ such that $\rm{End}(T)\otimes {\mathbb Q} \,\simeq\, 
{\mathcal B}.$

Let $\Lambda\,\simeq\,{ \mathbb Z}^4$ be some lattice in $\mathcal B$, and choose
a nontrivial anti-symmetric matrix
$$
M\,=\,
\begin{pmatrix}
0 & \alpha\\
- \alpha & 0
\end{pmatrix}\, \in \, \mathcal B$$
such that $\rm{tr} (\Lambda M \Lambda^t) \,\subset\, \mathbb Z$.
Denote by $\mathcal H$ the upper-half of complex plane.
For any $\tau \,\in\, \mathcal H$, we construct a complex structure on
$\mathcal B \otimes_{\mathbb Q} {\mathbb R}$ through the $\mathbb R$-vector space
isomorphism $$j_{\tau} \,:\, \mathcal B \otimes_{\mathbb Q} {\mathbb R} \,
\longrightarrow\, {\mathbb C}^2$$ defined as $A \,\longmapsto\,
A \cdot \begin{pmatrix}
\tau \\ 
\alpha
\end{pmatrix}$. 
There is a free and proper discontinuous action of $\Lambda$ on $\mathcal H \times \mathbb C^{2}$ given by 
$$ \lambda \cdot (h, (z_1, z_2))\,=\, (h, (z_1, z_2) + j_{\tau} (\lambda))$$ for any $\lambda\,\in\, \Lambda$.
The quotient for this action is a smooth nontrivial family $\Xi_{\mathcal B}$ of abelian
surfaces over $\mathcal H$.

Denote by $\overline{\Gamma}$ the stabilizer of the lattice $$\Lambda \begin{pmatrix} 1 & 0 \\
0 & \alpha\end{pmatrix}\, \subset\, {\rm SL}(2, \mathbb R)\, ,$$
and choose a torsionfree finite index subgroup $\Gamma \,\subset\, \overline{\Gamma}.$
Any element
$$\gamma \,=\,\begin{pmatrix} a & b \\
c & d \end{pmatrix}\,\in\, \Gamma
$$ 
acts on $\mathcal H$ by the conformal map $\tau \,\longmapsto\,\frac{a\tau +b}{c \tau +d}$.
We note that the fiber of $\Xi_{\mathcal B}$ over $\tau$ is
isomorphic to the fiber of $\Xi_{\mathcal B}$ over $\frac{a\tau +b}{c \tau +d}$ through the
multiplication by $\frac{1}{c \tau +d}$.

It follows that there is an action of the semi-direct product $\Gamma \ltimes \Lambda$ on 
$\mathcal H \times {\mathbb C}^2$ given by the map
\begin{equation}\label{s2}
(\gamma,\, \lambda) \cdot (\tau,\, z_1,\, z_2)\,\longmapsto\,
\left(\frac{a\tau +b}{c \tau +d},\, \frac{z_1+ m \tau +n}{c \tau +d},\,
\frac{z_2 + k \tau +l}{c \tau +d} \right)\, ,
\end{equation}
for all $\gamma\,= \,\begin{pmatrix} a & b \\
c & d\end{pmatrix}\,\in\,\Gamma$ and $\lambda\,=\,(m,\,n,\,k,\,l)\,\in\, \Lambda$. The
quotient of $\mathcal H \times {\mathbb C }^2$ by this action of $\Gamma \ltimes \Lambda$
is a projective abelian fibration
\begin{equation}\label{ksf}
\Xi_{\mathcal B} \,\longrightarrow\, \Sigma
\end{equation}
with base
$\Sigma \,=\, \mathcal H / \Gamma$ a compact Riemann surface of genus
$g \,\geq\, 2$; since $\mathcal B$ is a division algebra,
$\Gamma$ is a Fuchsian group such that $\mathcal H / \Gamma$ is compact \cite{Shi}.

A projective abelian fibration of the above type is called a {\it Kuga--Shimura projective threefold.}

Considering $\mathcal H \times \mathbb C^{2}$ as an open subset of ${\mathbb C}{\mathbb P}^3$, the action of 
$\Gamma\ltimes\Lambda$ on $\mathcal H \times \mathbb C^{2}$ in \eqref{s2} is evidently given by projective 
transformations. In particular, as it was first observed in \cite{JR1}, a Kuga--Shimura projective threefold is 
endowed with a flat holomorphic projective connection.

\subsection{Holomorphic projective connections and Weyl projective tensor}.  \label{section  projective connection}

Let $\mathcal Z$ be a complex manifold of complex dimension $n\,>\,1$.
A holomorphic connection on the holomorphic tangent bundle $T{\mathcal Z}$ of $\mathcal Z$
is called
a holomorphic affine connection on $\mathcal Z$ (see \cite{At} for holomorphic connection). A
holomorphic affine connection $\nabla$ on $\mathcal Z$ is called torsionfree if
$$\nabla_XY-\nabla_YX \,= \,\lbrack X,\, Y\rbrack$$ for all 
locally defined holomorphic vector fields $X$ and $Y$ on $\mathcal Z$.
Two holomorphic torsionfree affine connections $\nabla^1$ and $\nabla^2$ on $\mathcal Z$ are called {\it projectively 
equivalent} if they have the same non-parametrized holomorphic geodesics. This condition is
equivalent to the condition that there is a holomorphic $1$--form $\theta\,\in \,{\rm H}^0({\mathcal Z},\, T^*{\mathcal Z})$
such that
\begin{equation}\label{x1}
{\nabla^1}_XY\,=\,{\nabla^2}_XY+ \theta(X)Y+ \theta(Y)X
\end{equation}
for any locally
defined holomorphic vector fields $X,\, Y$ on $\mathcal Z$ (see \cite[p.~3021, Theorem 4.2]{MM},
\cite[p.~222, Proposition A.3.2]{OT}).

Let $M$ be a complex manifold of dimension $n\,>\,1$. A {\it holomorphic projective connection} on $M$ is given by 
a collection $(U_i,\, \nabla_i)_{i \in I}$, where
\begin{itemize}
\item $U_i\, \subset\, M$, $i\, \in\, I$, are open subsets with $\bigcup_{i\in I} U_i\, =\, M$, and

\item $\nabla_i$ is a torsionfree affine connection on $U_i$ such that for all $i,\, j\, \in\, I$, the
two affine connection $\nabla_i\vert_{U_i\cap U_j}$ and $\nabla_j\vert_{U_i\cap U_j}$ on $U_i\cap U_j$ are
projectively equivalent
\end{itemize}
(compare this with the equivalent definitions in \cite{KO} and \cite{MM}).
We say that the affine connection $(U_i, \, \nabla_i)$ is a local representative of the 
holomorphic projective connection.

Two holomorphic projective connections $(U_i, \,\nabla_i)_{i \in I}$ and $(U'_j, \,\nabla'_j)_{j \in J}$ 
are called \textit{projectively equivalent} if their union $\{(U_i, \,\nabla_i)_{i \in I},\,
(U'_j, \,\nabla'_j)_{j \in J}\}$ is again a holomorphic projective connection.

The above definition coincides with Definition 4.4 in \cite{MM}  and also with the definition given in \cite[Chapter 8]{Gu}  (see the proof in \cite{MM}  showing that the two definitions are equivalent). It should be 
mentioned that some authors call those projective connections, which are locally represented by {\it torsionfree} 
affine connections, as {\it normal} \cite{Ka,KO,JR1}. In their terminology we work, throughout the article, with {\it 
holomorphic normal projective connections.}

A holomorphic projective connection is called {\it flat} if it is projectively equivalent to a
holomorphic projective connection $(U_i, \,\nabla_i)_{i \in I}$, where each $\nabla_i$ is flat. This means that
a suitable holomorphic coordinate function on $U_i$ takes $\nabla_i$ to the standard connection
on ${\mathbb C}^n$. Once we fix holomorphic coordinate functions on every $U_i$ satisfying
the above condition that it takes $\nabla_i$ to the standard connection on ${\mathbb C}^n$,
the transition functions defined on the intersections $U_i \cap U_j$ are projective transformations between 
open subsets of ${\mathbb C}{\mathbb P}^n$. Hence
manifolds endowed with a flat holomorphic projective connection are locally modeled
on the complex projective space. Consequently, flat holomorphic projective connections on $M$ are
precisely the holomorphic projective structures on 
$M$.

The curvature tensor of a holomorphic affine connection $\nabla$ on $M$ is defined to be 
\begin{equation}\label{ct}
R(X,\,Y)Z\,=\,\nabla_X \nabla_YZ- \nabla_Y \nabla_X Z- \nabla_{\lbrack X, Y \rbrack } Z\, ,
\end{equation}
where $X,\,Y,\,Z$ are locally defined holomorphic vector fields on $M$. So $$R\, \in\, {\rm H}^0(M,\, 
(\bigwedge\nolimits^2 T^*M)\otimes {\rm End}(TM))\, .$$
The curvature $R$ vanishes identically if and only if 
$\nabla$ is locally isomorphic to the standard affine connection of $\mathbb C^n$. Let
${\rm End}^0(TM)\, \subset\, {\rm End}(TM)$  be  the direct summand given by the
endomorphisms of trace zero of the fibers. The trace-free
part of $R$, which is a holomorphic section of $(\bigwedge^2T^*M)\otimes {\rm End}^0(TM)$,
is the \textit{Weyl projective curvature} of $\nabla$. While the curvature $R$ is 
not a projective invariant, its Weyl projective curvature $W$ is evidently a projective invariant.

The Weyl projective tensor of a holomorphic projective connection on $M$ is the Weyl projective curvature
of the local representatives $(U_i, \,\nabla_i)_{i \in I}$ of the holomorphic projective connection.
A holomorphic projective connection is flat if and only if the associated Weyl projective tensor vanishes 
identically (see \cite{Ei,We} and \cite[p.~79--83]{Gu} for the adaptation of the proof to the holomorphic 
setting).

The expression of the Weyl curvature in dimension three --- the case of interest in this article --- is 
the following (see, for example, formula (3.4) on \cite[p.~114]{Ga}):
\begin{equation}\label{wt}
W(X,\,Y)Z\,= \,R(X,\,Y)Z-\frac{1}{4} \rm{Tr}R(X,\,Y)Z-\frac{1}{2}(\rm{Ricci}(Y,\,Z)X
\end{equation}
$$
-\rm{Ricci}(X,\,Z)(Y))
- \frac{1}{8} (\rm{Tr}R(Y,\,Z)X-\rm{Tr}R(X,\,Z)(Y))\, .
$$
In the expression in \eqref{wt},
\begin{equation}\label{ct2}
\rm{Ricci}\, \in\, {\rm H}^0(M,\, T^*M^{\otimes 2})
\end{equation}
is the Ricci curvature that sends
$\eta \otimes  \nu \in T_xM^{\otimes 2}$ to the trace of the endomorphism of $T_xM$ defined by
$\xi\, \longmapsto\, R(x)(\xi, \,\eta) \nu$. Also, in \eqref{wt},
$$\rm{Tr}R\, \in\, {\rm H}^0(M,\, \bigwedge\nolimits^2T^*M)$$ sends
$\eta\wedge\nu\, \in\, \bigwedge^2T_xM$ to the trace of the endomorphism of $T_xM$ defined by
$\xi \, \longmapsto\, R(\eta, \,\nu) \xi$.

We have $\rm{Tr}R(X,\,Y) \,=\,\rm{Ricci} (Y,\,X) -\rm{Ricci}(X,\,Y)$, and hence the Weyl
tensor in \eqref{wt} can be expressed in terms of just the Ricci tensor (this is exactly the formula (3.4) on
\cite[p.~114]{Ga}):
$$W(X,\,Y)Z\,=\,R(X,\,Y)Z + \frac{1}{4}(\rm{Ricci}(X,Y)Z - \rm{Ricci}(Y,X)Z)
$$
$$
+ \frac{1}{8} \lbrack (3 \rm{Ricci} (X,Z) + \rm{Ricci} (Z,X))Y -( 3\rm{Ricci} (Y,Z) + \rm{Ricci} (Z,Y))X \rbrack \, .$$

We note that ${\rm Ricci}$ is  not a projective invariant, while, in contrast,  as mentioned before,  $W$
is projectively invariant.  Moreover,  the Weyl tensor $W$ 
possesses the same tensorial symmetries as $R$.
In particular, $W\, \in\, {\rm H}^0(M,\, \bigwedge^2T^*M)\otimes {\rm End}^0(TM))$
satisfies the first Bianchi identity which says that 
\begin{equation}\label{bii}
W(X,\,Y)Z + W(Y,\,Z)X + W(Z,\,X)Y\,=\,0
\end{equation}
for all locally defined holomorphic vector fields $X,\,Y,\,Z$ on $M$.

The local symmetries for a holomorphic projective connection are given by the local {\it (projective) Killing 
field.} For a holomorphic projective connection $\phi$ on $M$, 
a local holomorphic vector field $\mathbb K$ on $M$ is a local {\it projective Killing field} 
(or briefly Killing field, when there is no ambiguity) if the local flow for $\mathbb K$ preserves
$\phi$. When the local flow for $\mathbb K$ preserves a holomorphic affine 
connection $\nabla$ representing $\phi$, then $\mathbb K$ is called a local {\it affine Killing 
field}.

The local Lie algebra formed by all local projective Killing fields has finite dimension. The dimension of the 
projective Killing Lie algebra for $(M,\, \phi)$ is at most $(n+1)^2-1$, where $n\,=\, \dim_{\mathbb C} M$. This 
maximal bound is realized only for projectively flat manifolds: in this case the local projective Killing Lie 
algebra is isomorphic to the Lie algebra of ${\rm PGL}(n+1, \mathbb C)$.

\section{Holomorphic projective connections on K\"ahler-Einstein manifolds} \label{section 3}

In this Section we study holomorphic projective connections on compact K\"ahler--Einstein manifolds 
and prove Theorem \ref{classification}. 

According to \cite{KO,KO1}, the only compact K\"ahler--Einstein manifolds of dimension $n$ admitting a holomorphic 
projective connection are the standard ones: the complex projective space ${\mathbb C}{\mathbb P}^n$, the compact 
quotients of the complex hyperbolic $n$-space ${\rm \mathbb H}_{\mathbb C}^n$ by a torsion-free discrete subgroup 
in ${\rm SU}(n,1)$ and the \'etale quotients of compact complex $n$-tori.

The case of holomorphic projective connections on quotients of ${\rm \mathbb H}_{\mathbb C}^n$ will be settled in 
Corollary \ref{cor1}. The case of the complex projective space ${\mathbb C}{\mathbb P}^n$ will be settled in 
Corollary \ref{cor2} (more general results are known from \cite{Ye,JR1,BM}).
Both of  these results are direct consequences of Lemma \ref{lemma} which parametrizes the space of projective 
classes of holomorphic projective connections on a complex manifold (compare this with \cite[Proposition 5.7]{Kl1} 
and \cite[Proposition 2.1]{MY} for the flat case).

Holomorphic projective connections on compact complex tori 
are studied in Proposition \ref{abelian}. 

Let us first state a technical result which will be useful in the sequel (compare it with 
{\cite[p.~7449, Lemma 5.6]{BD} where the sufficient condition was proved for compact 
manifolds with trivial canonical bundle).

\begin{lemma}\label{trivial class}
Let $\phi$ be a holomorphic projective connection on a complex manifold $M$. Then $M$ admits a holomorphic 
torsionfree affine connection $\nabla$ which is projectively equivalent to $\phi$ if and only if the canonical line 
bundle $K_M$ admits a holomorphic connection. If $M$ is compact and K\"ahler, this condition is equivalent with the 
condition that $c_1(M)\,=\, 0$.
In particular, the above condition is automatically satisfied if $K_M$ is trivial.
\end{lemma}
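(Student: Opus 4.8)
The plan is to reduce the existence of a global torsionfree affine representative of $\phi$ to the vanishing of a single class in ${\rm H}^1(M,\, T^*M)$, and then to identify that class, up to a nonzero scalar, with the Atiyah class of $K_M$. First I would fix a local representative $(U_i,\, \nabla_i)_{i\in I}$ of $\phi$. By definition, on each nonempty overlap $U_i \cap U_j$ there is a holomorphic $1$--form $\theta_{ij} \in {\rm H}^0(U_i \cap U_j,\, T^*M)$ with
\[
(\nabla_i)_X Y \,=\, (\nabla_j)_X Y + \theta_{ij}(X) Y + \theta_{ij}(Y) X ,
\]
and the cocycle condition $\theta_{ij} + \theta_{jk} \,=\, \theta_{ik}$ holds on triple overlaps. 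A global torsionfree affine connection $\nabla$ projectively equivalent to $\phi$ is precisely a choice of holomorphic $1$--forms $\theta_i$ on $U_i$ for which the local connections $\nabla_i + \theta_i\otimes {\rm Id} + {\rm Id}\otimes\theta_i$ agree on overlaps; the agreement condition reads $\theta_j - \theta_i \,=\, \theta_{ij}$, so such a $\nabla$ exists if and only if the \v{C}ech class $[\{\theta_{ij}\}] \in {\rm H}^1(M,\, T^*M)$ vanishes. Note that every connection produced this way is automatically torsionfree, since the added term $\theta(X)Y+\theta(Y)X$ is symmetric in $X,\,Y$.

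The key step is to identify this obstruction class with the Atiyah class of the canonical bundle. Taking the trace of the Christoffel symbols, each $\nabla_i$ induces a holomorphic connection $D_i$ on the line bundle $\det TM \,=\, K_M^{-1}$ over $U_i$, and a direct computation in coordinates shows that under a projective change by a $1$--form $\theta$ the induced trace connection changes by $(n+1)\theta$, where $n \,=\, \dim_{\mathbb C} M$. Hence $D_i - D_j \,=\, (n+1)\theta_{ij}$ on $U_i \cap U_j$, so $\{(n+1)\theta_{ij}\}$ is exactly a \v{C}ech representative of the Atiyah class ${\rm at}(K_M^{-1}) = -\,{\rm at}(K_M) \in {\rm H}^1(M,\, T^*M)$, the obstruction to the $D_i$ gluing to a global connection on $K_M^{-1}$. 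Since $n+1 \,\neq\, 0$, the vanishing of $[\{\theta_{ij}\}]$ is equivalent to ${\rm at}(K_M) \,=\, 0$, which by Atiyah's theorem \cite{At} is equivalent to $K_M$ (equivalently $K_M^{-1}$) admitting a holomorphic connection. Concretely, in the ``if'' direction one starts from a global holomorphic connection $D$ on $K_M^{-1}$, adjusts each $\nabla_i$ by the projective change $\theta_i := \tfrac{1}{n+1}(D - D_i)$, and observes that the resulting local connections all induce the \emph{same} determinant connection $D$; since two projectively equivalent torsionfree connections whose induced determinant connections agree must have vanishing projective $1$--form (again because $n+1\neq 0$), these adjusted connections coincide on overlaps and glue to the desired global $\nabla$.

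For the K\"ahler case I would invoke Hodge theory. On a compact K\"ahler manifold the inclusion ${\rm H}^1(M,\, T^*M) \,=\, H^{1,1}(M) \,\hookrightarrow\, {\rm H}^2(M,\, \mathbb C)$ is injective, and it carries ${\rm at}(K_M)$ to a nonzero scalar multiple of the image of $c_1(K_M)$ in ${\rm H}^2(M,\, \mathbb C)$. Hence ${\rm at}(K_M) \,=\, 0$ if and only if $c_1(K_M) \,=\, 0$, that is, if and only if $c_1(M) \,=\, -\,c_1(K_M) \,=\, 0$. Finally, if $K_M$ is holomorphically trivial then the flat connection $d$ in any global trivialization is a holomorphic connection on $K_M$, so the criterion of the lemma is automatically satisfied, recovering the sufficient condition of \cite[Lemma 5.6]{BD}.

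The main obstacle is the precise identification of the cocycle $\{(n+1)\theta_{ij}\}$ with a representative of ${\rm at}(K_M^{-1})$, together with the correct normalization and sign relating ${\rm at}(K_M)$ to $c_1(K_M)$ under the Hodge inclusion. Both are standard consequences of Atiyah's construction \cite{At} and the degeneration of the Hodge--de Rham spectral sequence in the K\"ahler setting, so the remaining work is essentially bookkeeping of traces and signs.
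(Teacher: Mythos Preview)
Your argument is correct and follows essentially the same route as the paper: both identify the obstruction to a global affine representative with the Atiyah class of $K_M$ and then invoke \cite{At}. The only notable difference is that for the special case $K_M$ trivial the paper gives an explicit construction---take any local representatives $\nabla_i$ and replace each by the unique projectively equivalent $\widetilde{\nabla}_i$ for which a fixed global volume form $\omega$ is parallel, whereupon uniqueness forces them to glue---and this concrete description is reused later in the paper (Proposition~\ref{Killing2}); your appeal to the general criterion is of course also valid.
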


\begin{proof}
The proof is obtained as a direct consequence of the results in \cite{KO} (see also \cite{Gu}). There exists a 
holomorphic affine connection representing the projective connection $\phi$ if and only if the cocycle (3.2) in 
\cite{KO} defined as $d\log\Delta_{ij}$, where $\Delta_{ij}$ is the 1-cocycle of the canonical bundle $K_M$, 
vanishes in the cohomology group ${\rm H}^1(M,\, \Omega_M)$ of the sheaf of holomorphic one-forms (see the explicit 
formula (3.6) on \cite[p.~78--79]{KO}). This vanishing condition is satisfied if and only if the canonical line 
bundle $K_M$ admits a holomorphic connection (it coincides with the condition that the Atiyah class for $K_M$ 
vanishes \cite[Theorem 5, p.~195]{At}; see also \cite[p.~96--97]{Gu} for an alternative approach). For compact 
K\"ahler manifolds, this condition is equivalent to the condition that $c_1(M)\,=\, 0$ \cite[Proposition 12, 
p.~196]{At}.

If $K_M\,=\, {\mathcal O}_M$, the existence of a (global) holomorphic torsionfree affine connection representing 
$\phi$ was proved in \cite[p.~7449, Lemma 5.6]{BD}. For the convenience of the reader, we include here a short 
proof which will be needed in the proof of Proposition \ref{Killing2}.

Let $M\,= \,\bigcup_{i \in I} U_i$ be an open cover of $M$ such that on each $U_i$ there is a holomorphic 
torsionfree affine connection $\nabla_i$ projectively equivalent to the given projective connection $\phi$. In 
particular, $\nabla_i$ and $\nabla_j$ are projectively equivalent on $U_i\bigcap U_j$. Let $\omega$ be a 
holomorphic volume form on $M$ (i.e., $\omega$ is a trivializing holomorphic section of $K_M$). On 
each $U_i$, there exists a unique holomorphic torsionfree affine connection $\widetilde{\nabla}_i$ projectively 
equivalent to $\nabla_i$ such that $\omega$ is parallel with respect to $\widetilde{\nabla}_i$ \cite[Appendix 
A.3]{OT}. By uniqueness, $\widetilde{\nabla}_i$ and $\widetilde{\nabla}_j$
agree $U_i\bigcap U_j$ for all $i,\, j\, \in\, I$. Consequently, the connections $\{\widetilde{\nabla}_i\}$
together define a global 
holomorphic torsionfree affine connection on $M$ which is projectively equivalent to $\phi$.
\end{proof}

The $i$--fold symmetric product of a vector bundle $V$ would be denoted by $S^i(V)$.
For a complex manifold $M$, let
\begin{equation}\label{x3}
{\rm div}\,:\, S^2(T^*M) \otimes TM\, \longrightarrow\, T^*M
\end{equation}
be the map constructed by combining the natural homomorphism $$S^2(T^*M) \otimes TM
\,\longrightarrow\, T^*M
\otimes {\rm End}(TM)$$ with the trace map ${\rm Tr} : {\rm End}(TM)
\,\longrightarrow\, {\mathcal O}_M$. The resulting map in \eqref{x3} is denoted
by ${\rm div}$ (not to be mixed with the earlier map ${\rm dev}$)
because it can be seen as a divergence operator defined on
the space of quadratic vector fields (see \cite[p.~180]{OT}). Now define
\begin{equation}\label{et0}
(S^2(T^*M) \otimes TM)_0\, :=\, \text{kernel}({\rm div})\, \subset\, S^2(T^*M) \otimes TM\, .
\end{equation}

A section of  $(S^2(T^*M) \otimes TM)_0$ will be called  {\it  trace-free.}

The next lemma generalizes to the non-flat case a known result for flat projective 
connections; compare it with \cite[Proposition 5.7]{Kl1} and \cite[Proposition 2.1]{MY}, and 
notice that the bundle $\pi_{*}Hom(L,S)$ in \cite[Proposition 2.1]{MY} is isomorphic to 
$(S^2(T^*M) \otimes TM)_0$ defined in \eqref{et0}.

\begin{lemma} \label{lemma}
Let $M$ be a complex manifold of complex dimension $n \,>\,1$ endowed with a
holomorphic projective connection. Then the space of projective equivalence classes of
holomorphic projective connections on $M$ is identified with
${\rm H}^0(M,\, (S^2(T^*M) \otimes TM)_0)$ (see \eqref{et0}).
\end{lemma}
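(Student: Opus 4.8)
The plan is to fix the given projective connection $\phi$ as a base point and to show that every projective equivalence class of holomorphic projective connections on $M$ is encoded by a single global trace-free tensor, exploiting the fact that the difference of two torsionfree affine connections is a symmetric tensor. Recall that if $\nabla^1,\nabla^2$ are holomorphic torsionfree affine connections on an open set $U$, then $A := \nabla^1 - \nabla^2$ is ${\mathcal O}_U$-bilinear and symmetric, so $A\in {\rm H}^0(U,\, S^2(T^*M)\otimes TM)$; conversely $\nabla^2 + A$ is again torsionfree for every such $A$. Moreover, by \eqref{x1}, $\nabla^1$ and $\nabla^2$ are projectively equivalent precisely when $A = A_\theta$ for some holomorphic $1$-form $\theta$, where $A_\theta(X,Y) := \theta(X)Y + \theta(Y)X$.

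The linear-algebraic heart is the splitting of $S^2(T^*M)\otimes TM$ furnished by the divergence map \eqref{x3}. First I would compute ${\rm div}(A_\theta)$: the endomorphism $Y\mapsto \theta(X)Y + \theta(Y)X$ has trace $(n+1)\theta(X)$, so ${\rm div}(A_\theta) = (n+1)\,\theta$. As $n+1\neq 0$, this shows that $\theta\mapsto A_\theta$ is injective and that its image is a complement to $(S^2(T^*M)\otimes TM)_0 = {\rm ker}({\rm div})$ of \eqref{et0}. Hence every $A\in S^2(T^*M)\otimes TM$ decomposes uniquely as $A = A_0 + A_\theta$ with $A_0 \in (S^2(T^*M)\otimes TM)_0$ and $\theta = \tfrac{1}{n+1}{\rm div}(A)$; I will refer to $A_0$ as the trace-free part of $A$ and to the $A_\theta$ summand as the projective direction. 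The crucial feature is that $\theta\mapsto A_\theta$ is a globally defined bundle map, independent of any local choices.

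With this in hand I would define the identification. Given a projective connection $\phi'$, represent both $\phi$ and $\phi'$ on a common open cover $(U_i)_{i\in I}$ by torsionfree connections $\nabla_i$ and $\nabla'_i$, and set $A_i := \nabla'_i - \nabla_i$. On an overlap $U_i\cap U_j$ both $\nabla_i$ is projectively equivalent to $\nabla_j$ and $\nabla'_i$ to $\nabla'_j$, so by \eqref{x1} their differences lie in the projective direction; subtracting, $A_i - A_j = A_{\gamma_{ij}}$ for some $1$-form $\gamma_{ij}$. Taking trace-free parts, the projective-direction term drops out and $(A_i)_0 = (A_j)_0$ on $U_i\cap U_j$. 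Therefore the local sections $(A_i)_0$ glue to a global $A_0 \in {\rm H}^0(M,\, (S^2(T^*M)\otimes TM)_0)$, which is my candidate invariant of $\phi'$.

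It remains to verify that this is a bijection on projective equivalence classes, and this bookkeeping is where I expect the most care to be needed. If $\phi'$ and $\phi''$ are projectively equivalent, then after refining the cover their representatives satisfy $\nabla'_i\sim\nabla''_i$, so $A'_i - A''_i$ lies in the projective direction and the two share the same trace-free part, proving the map is well defined on equivalence classes. Conversely, if $\phi'$ and $\phi''$ produce the same $A_0$, then $\nabla'_i - \nabla''_i$ lies in the projective direction for each $i$, hence $\nabla'_i\sim\nabla''_i$ and $\phi'$ is projectively equivalent to $\phi''$, giving injectivity. Finally, for surjectivity, any $A_0\in {\rm H}^0(M,\, (S^2(T^*M)\otimes TM)_0)$ yields the projective connection $(U_i,\, \nabla_i + A_0|_{U_i})$: the connections are torsionfree, and on overlaps $(\nabla_i+A_0)-(\nabla_j+A_0) = \nabla_i - \nabla_j$ lies in the projective direction, so they are pairwise projectively equivalent; this connection maps back to $A_0$. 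Together these facts establish the asserted identification, the one genuinely delicate point being that the splitting is natural enough to glue, which is exactly guaranteed by the global nature of $\theta\mapsto A_\theta$.
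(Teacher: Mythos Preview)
Your proposal is correct and follows essentially the same approach as the paper: both arguments rest on the splitting $S^2(T^*M)\otimes TM = (S^2(T^*M)\otimes TM)_0 \oplus {\rm Im}(\theta\mapsto A_\theta)$ coming from ${\rm div}(A_\theta)=(n+1)\theta$, and then glue the trace-free parts of the local differences $\nabla'_i-\nabla_i$ over an open cover. The only cosmetic difference is that the paper first treats the case where the universal cover has trivial canonical bundle (obtaining a global affine representative there and phrasing things in terms of $\pi_1(M)$-invariance) before giving the general local-patching argument you wrote down directly; your version simply goes straight to that general argument.
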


\begin{proof}
Fix a point $x_0\, \in\, M$, and let $$\varpi\, :\, \widetilde{M}\, \longrightarrow\, M$$
be the corresponding universal cover of $M$.
Let $\phi$ be holomorphic
projective connection on $M$. Let $\varpi^*\phi$ be the holomorphic
projective connection on $\widetilde{M}$ obtained by pulling back $\phi$.

First assume that the canonical bundle of $\widetilde{M}$ is holomorphically trivial.
Now Lemma \ref{trivial class} implies that $\varpi^*\phi$ is represented by a
(globally defined) torsionfree holomorphic affine connection. Let $\nabla^0$ be
such a global representative.

Now consider another holomorphic projective connections $\phi'$ on $M$. Let $\nabla$
be a holomorphic affine connection on $\widetilde{M}$ that represents
$\varpi^*\phi'$. Then
\begin{equation}\label{x2}
\Theta\, := \,\nabla - \nabla^0 \,\in\, {\rm H}^0(\widetilde{M},\, 
S^2(T^*\widetilde{M}) \otimes T\widetilde{M})\, .
\end{equation}
It should be mentioned that $\Theta$ lies in the subspace
$$
{\rm H}^0(\widetilde{M},\, S^2(T^*\widetilde{M}) \otimes T\widetilde{M})\, \subset\,
{\rm H}^0(\widetilde{M},\, (T^*\widetilde{M})^{\otimes 2} \otimes T\widetilde{M})
$$
because both $\nabla$ and $\nabla^0$ are torsionfree.

The natural action of $\pi_1(M,\, x_0)$ on $\widetilde{M}$ is evidently by $\nabla^0$--projective 
transformations. Although this action does not preserve the connection $\nabla^0$, the 
action of any element $\gamma \,\in \,\pi_1(M)$ does send $\nabla^0$ to an affine connection 
\begin{equation}\label{ng}
\nabla_{\gamma}
\end{equation}
which is projectively equivalent to $\nabla^0$, meaning there is a holomorphic one-form 
$\phi_{\gamma} \,\in\, {\rm H}^0(\widetilde{M},\, T^*\widetilde{M})$ such that 
$$({\nabla_{\gamma}})_XY \,=\, \nabla^0_X Y+ \phi_{\gamma}(X)Y + \phi_{\gamma}(Y)X$$ (see 
\eqref{x1}). If $\Theta$ in \eqref{x2} is invariant under the action of $\pi_1(M,\, x_0)$ on 
$\widetilde M$, then $\gamma \,\in\, \pi_1(M,\, x_0)$ sends $\nabla\,=\,\nabla^0 + \Theta$ 
to the projectively equivalent connection $\nabla_{\gamma} + \Theta$, where $\nabla_\gamma$ 
is the connection in \eqref{ng}. This immediately implies that the action of $\pi_1(M,\, 
x_0)$ on $\widetilde M$ does factor through the $\nabla$-projective transformations. 
Consequently, $\nabla$ descends to $M$ as a holomorphic projective connection.

Therefore, the $\pi_1(M,\, x_0)$--invariance of $\Theta$ is a sufficient condition for
$\nabla^0 + \Theta$ to descend to $M$ as a holomorphic projective connection.

We shall prove that the trace-free part of $\Theta$ is $\pi_1(M,\, x_0)$--invariant if and only if $\nabla$
descends to $M$ as a holomorphic projective connection.

There is a natural injection
\begin{equation}\label{cj}
{\mathcal J} \,: \,T^*\widetilde{M}\,\longrightarrow\,
S^2(T^*\widetilde{M}) \otimes T\widetilde{M}
\end{equation}
that sends any $l\, \in\, T^*_y\widetilde{M}$ to the
homomorphism ${\mathcal J}(l) \,:\, S^2(T_y\widetilde{M})\,\longrightarrow\,T_y\widetilde{M}$ defined by
$u\otimes v \,\longmapsto\, l(u) v+l(v) u$. 
It is straightforward to check that for ${\rm div}$ in \eqref{x3},
$${\rm div} \circ {\mathcal J}\,= \,(n+1) {\rm Id}\, ,$$
and hence the decomposition into a direct sum
$$
S^2(T^*\widetilde{M}) \otimes T\widetilde{M}\,=\,(S^2(T^*\widetilde{M}) \otimes T\widetilde{M})_0
\oplus {\rm Im}({\mathcal J})
$$
is obtained, where $(S^2(T^*\widetilde{M}) \otimes T\widetilde{M})_0\,=\,\text{kernel}({\rm div})$
(as in \eqref{et0}). The projection
\begin{equation}\label{T}
{\mathbb F}\, :\, S^2(T^*\widetilde{M}) \otimes T\widetilde{M}\,\longrightarrow\,
(S^2(T^*\widetilde{M}) \otimes T\widetilde{M})_0
\end{equation}
for the above decomposition
coincides with the map defined by $\Theta \,\longmapsto\, \Theta- \frac{1}{n+1} ({\mathcal J} \circ {\rm div})$,
where ${\mathcal J}$ is constructed in \eqref{cj} (see \cite[p.~180]{OT}).

Next from the definition of the projectively equivalent connections (compare with the expression of
${\mathcal J}$) it follows 
that the action of $\pi_1(M,\, x_0)$ on $\widetilde M$ is via
 projective equivalent maps with respect to $\nabla^0 + \Theta$ if and only if $\pi_1(M,\, x_0)$ 
preserves the trace-free part of $\Theta$. Therefore, the holomorphic affine connection $\nabla^0 + \Theta$ on 
$\widetilde{M}$ descends to a well-defined holomorphic projective connection on $M$ if and only if
$${\mathbb F}(\Theta)\,\in\, {\rm H}^0(\widetilde{M},\, (S^2(T^*\widetilde{M}) \otimes T\widetilde{M})_0)$$
is $\pi_1(M,\, x_0)$--invariant, where $\mathbb F$ is the projection in \eqref{T}. Consequently, the space 
of projective equivalence classes of holomorphic projective connections on $M$ is identified with the space of 
 holomorphic sections of $(S^2(T^*M) \otimes TM)_0$.

Now consider the general case where $M$ is any complex manifold endowed with a holomorphic projective connection
$\phi_0$. Let $(U_i, \, \nabla^0_i)_{i\in I}$ be a covering of $M$ by local representatives of $\phi_0$, where 
$\nabla^0_i$ is a holomorphic torsionfree affine connection on $U_i$
that represents $\phi_0\vert_{U_i}$. Of course on each intersection $U_i 
\bigcap U_j$ the connections $\nabla^0_i$ and $\nabla^0_j$ are projectively equivalent. On each open subset $U_i$,
the canonical line bundle $K_{U_i}\,=\, K_M\vert_{U_i}$ admits a holomorphic affine connection induced by $\nabla^0_i$.

Next take another holomorphic projective connection $\phi$ on $M$. By Lemma \ref{trivial class}, on each open 
subset $U_i$ there exists a holomorphic torsionfree affine connection $\nabla_i$ representing $\phi\vert_{U_i}$. Define 
$$\Theta_i \,:=\, \nabla_i-\nabla^0_i$$ on each $U_i$; it is a holomorphic section of $S^2(T^*M) \otimes TM$ over 
$U_i$. By the above considerations, the trace zero-part ${\mathbb F}(\Theta_i)$, where $\mathbb F$ is constructed 
in \eqref{T}, does not depend on the choices of the local representatives $\nabla^0_i$ and $\nabla_i$. This implies 
that the local sections ${\mathbb F}(\Theta_i)$ and ${\mathbb F}(\Theta_j)$ coincide on $U_i\bigcap U_j$. 
Consequently, the local sections ${\mathbb F}(\Theta_i)$ glue together compatibly to produce a global holomorphic 
section of $(S^2(T^*M) \otimes TM)_0$ (defined in \eqref{et0}) over $M$.

Conversely, take any $\Theta \,\in \, {\rm H}^0(M,\, (S^2(T^*M) \otimes TM)_0)$. On each open
subset $U_i$, consider 
the holomorphic affine connection $\nabla^0_i + \Theta_i$, where $\Theta_i$ is the restriction of $\Theta$ to 
$U_i$. Over $U_i\bigcap U_j$, the difference 
$(\nabla^0_i + \Theta_i)- (\nabla^0_j + \Theta_j)$
between the two holomorphic affine connections 
is a holomorphic section of $S^2(T^*M) \otimes TM$ over $U_i\bigcap U_j$ that lies in the
image of the homomorphism
${\mathcal J}$ in \eqref{cj}. This implies that the restrictions of $(\nabla^0_i + \Theta_i)\vert_{U_i\cap U_j}$ and 
$(\nabla^0_j + \Theta_j)\vert_{U_i\cap U_j}$ are projectively equivalent. Therefore, the collection
$(U_i,\, \nabla^0_i+\Theta_i)_{i\in I}$ defines a holomorphic projective connection on $M$, which
will be denoted by $\phi$.

By construction, the holomorphic projective connection $\phi$ constructed above is 
projectively equivalent to $\phi_0$ if and only if the corresponding section $\Theta$ vanishes identically.
\end{proof}

\begin{corollary}\label{cor1} 
Let $M$ be a  quotient of the complex hyperbolic space
${\rm \mathbb H}_{\mathbb C}^n$ by a torsion-free  lattice with finite covolume in
 ${\rm S}{\rm U}(n,1)$, with $n>1$. Then there is a unique holomorphic projective connection
on $M$, namely the standard flat one.
\end{corollary}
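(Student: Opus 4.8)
The plan is to derive the whole statement from Lemma \ref{lemma}. First I would record that $M$ already carries a holomorphic projective connection, namely the standard flat one: since $M\,=\,{\rm \mathbb H}_{\mathbb C}^n/\Gamma$ with $\Gamma\,\subset\,{\rm SU}(n,1)$ acting on the ball ${\rm \mathbb H}_{\mathbb C}^n\,\subset\,{\mathbb C}{\mathbb P}^n$ by projective transformations, the projective structure of ${\mathbb C}{\mathbb P}^n$ descends to $M$ (this is the second standard example of Section \ref{section 2}). Thus the set of projective equivalence classes of holomorphic projective connections on $M$ is nonempty, and by Lemma \ref{lemma} it is identified with the vector space $H^0(M,\, (S^2(T^*M)\otimes TM)_0)$ of \eqref{et0}, the standard flat class being sent to $0$. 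Consequently, uniqueness of the holomorphic projective connection on $M$ is \emph{equivalent} to the vanishing
$$H^0(M,\, (S^2(T^*M)\otimes TM)_0)\,=\,0,$$
and this vanishing is the only thing that remains to be proved.

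For the vanishing I would pass to the universal cover $\varpi\,:\,{\rm \mathbb H}_{\mathbb C}^n\,\longrightarrow\, M$ and view a section as a $\Gamma$--invariant holomorphic section of the homogeneous bundle $(S^2(T^*)\otimes T)_0$ on the ball; under the isotropy representation its fiber is the irreducible ${\rm U}(n)$--module of highest weight $(1,0,\ldots,0,-2)$. By the holomorphic splitting $S^2(T^*M)\otimes TM\,=\,(S^2(T^*M)\otimes TM)_0\oplus {\rm Im}({\mathcal J})$ furnished by ${\rm div}$ and ${\mathcal J}$ in \eqref{x3} and \eqref{cj}, with ${\rm Im}({\mathcal J})\,\cong\, T^*M$, the point is that after removing the trace part $T^*M$ (which does carry sections, namely the holomorphic $1$--forms) the remaining trace-free tensor bundle has no holomorphic sections on a compact ball quotient. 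This is exactly the computation of Mok--Yeung: under the identification noted just before Lemma \ref{lemma}, $H^0(M,\, (S^2(T^*M)\otimes TM)_0)$ coincides with the space $H^0(M,\, \pi_*{\rm Hom}(L,S))$ shown to vanish for compact quotients of ${\rm \mathbb H}_{\mathbb C}^n$ in \cite[Proposition 2.1]{MY} (see also \cite[Proposition 5.7]{Kl1}). Feeding this into the first paragraph gives that the standard flat projective connection is the unique one.

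The hard part is the vanishing itself, and it is genuinely special to ball quotients rather than a formal consequence of negative curvature. Indeed, with respect to the ample polarization $K_M$ one has $\mu(TM)\,<\,0$ because $c_1(M)\,<\,0$, and a direct Chern class computation gives that $(S^2(T^*M)\otimes TM)_0$ has slope $-\mu(TM)\,>\,0$; hence neither a slope/stability argument nor the naive Bochner--Kodaira inequality can force the sections to vanish. Likewise, on a compact K\"ahler--Einstein manifold with $c_1\,<\,0$ the general tensor-field vanishing theorems apply to bundles of type $(T^*)^{\otimes p}\otimes T^{\otimes q}$ only when $q\,>\,p$, whereas here $q\,=\,1\,<\,p\,=\,2$. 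What rescues the statement is the constant holomorphic sectional curvature of ${\rm \mathbb H}_{\mathbb C}^n$ together with the trace-free (kernel of ${\rm div}$) condition, which is precisely the input of the Mok--Yeung argument; this is why I would isolate the vanishing as the single nontrivial ingredient and reduce the corollary to it by the bookkeeping of the first two paragraphs.
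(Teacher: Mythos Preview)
Your proposal is correct and follows essentially the same route as the paper: reduce uniqueness, via Lemma \ref{lemma}, to the vanishing $H^0(M,\,(S^2(T^*M)\otimes TM)_0)=0$, and then invoke Mok--Yeung \cite{MY} and Klingler \cite{Kl1} for that vanishing on compact ball quotients. The paper's proof is simply the two-line version of your first two paragraphs (citing \cite[Proposition 4.10]{Kl1} and \cite[Section 3]{MY} for the vanishing); your third paragraph explaining why naive slope or Bochner arguments fail is helpful commentary but not part of the argument proper.
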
 

\begin{proof}
It was proved in \cite[Proposition 4.10]{Kl1}, and earlier in \cite[Section 3]{MY} for the cocompact case, that
${\rm H}^0(M,\, (S^2(T^*M) \otimes TM)_0)\,=\, 0$. Therefore, $M$ has
at most one holomorphic projective connection by Lemma \ref{lemma}.
\end{proof} 

\begin{remark}
Note that Lemma \ref{lemma} does not hold for $n\,=\,1$. Indeed, it is classically known that the space of
holomorphic projective structures on a Riemann surface $\Sigma$ is an affine space for the vector space of
holomorphic quadratic differentials on $\Sigma$ (see \cite{Gu} or Chapter 8 in \cite{StG}). Lemma
\ref{lemma} is a higher dimensional version of this classical result.
Also, Corollary \ref{cor1} does not hold for $n\,=\,1$ for the same reason.
\end{remark}

\begin{corollary} \label{cor2}
The complex projective space ${\mathbb C}{\mathbb P}^n$ admits a unique holomorphic projective
connection, namely the standard flat one.
\end{corollary}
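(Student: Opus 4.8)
The plan is to read off the statement from Lemma \ref{lemma}. Since $\mathbb{CP}^n$ already carries the standard flat holomorphic projective connection $\phi_0$, Lemma \ref{lemma} applies (for $n>1$) and identifies the set of projective equivalence classes of holomorphic projective connections on $\mathbb{CP}^n$ with the complex vector space $\mathrm{H}^0(\mathbb{CP}^n,\, (S^2(T^*\mathbb{CP}^n)\otimes T\mathbb{CP}^n)_0)$, the class of $\phi_0$ being the zero element (it is the base point in the proof of Lemma \ref{lemma}). Hence it suffices to prove that this space of global sections is zero; the unique projective class is then that of $\phi_0$. As $(S^2(T^*\mathbb{CP}^n)\otimes T\mathbb{CP}^n)_0 = \ker(\mathrm{div})$ is a subbundle of $S^2(T^*\mathbb{CP}^n)\otimes T\mathbb{CP}^n$, it is enough to show that $\mathrm{H}^0(\mathbb{CP}^n,\, S^2\Omega^1 \otimes T) = 0$, where I abbreviate $\Omega^1 = T^*\mathbb{CP}^n$ and $T = T\mathbb{CP}^n$.

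To compute this global space I would use the Euler sequence, whose dual reads $0 \to \Omega^1 \to \mathcal{O}(-1)^{\oplus(n+1)} \to \mathcal{O} \to 0$. Taking second symmetric powers of the subbundle inclusion $\Omega^1 \hookrightarrow \mathcal{O}(-1)^{\oplus(n+1)}$ exhibits $S^2\Omega^1$ as a subbundle of $S^2(\mathcal{O}(-1)^{\oplus(n+1)}) = \mathcal{O}(-2)^{\oplus\binom{n+2}{2}}$. Tensoring with $T$ gives a subbundle inclusion $S^2\Omega^1 \otimes T \hookrightarrow (T \otimes \mathcal{O}(-2))^{\oplus\binom{n+2}{2}}$, whence an injection
\begin{equation*}
\mathrm{H}^0(\mathbb{CP}^n,\, S^2\Omega^1 \otimes T) \,\hookrightarrow\, \mathrm{H}^0(\mathbb{CP}^n,\, T \otimes \mathcal{O}(-2))^{\oplus\binom{n+2}{2}}.
\end{equation*}
Thus everything reduces to the vanishing $\mathrm{H}^0(\mathbb{CP}^n,\, T \otimes \mathcal{O}(-2)) = 0$.

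For this final vanishing I would tensor the Euler sequence $0 \to \mathcal{O} \to \mathcal{O}(1)^{\oplus(n+1)} \to T \to 0$ by $\mathcal{O}(-2)$ to obtain $0 \to \mathcal{O}(-2) \to \mathcal{O}(-1)^{\oplus(n+1)} \to T\otimes\mathcal{O}(-2) \to 0$, and pass to the long exact cohomology sequence. Since $\mathrm{H}^0(\mathbb{CP}^n,\, \mathcal{O}(-1)) = 0$ and $\mathrm{H}^1(\mathbb{CP}^n,\, \mathcal{O}(-2)) = 0$ for $n \geq 2$ (indeed $\mathrm{H}^i(\mathbb{CP}^n,\, \mathcal{O}(d)) = 0$ whenever $0 < i < n$), the term $\mathrm{H}^0(\mathbb{CP}^n,\, T\otimes\mathcal{O}(-2))$ is caught between two vanishing groups and is therefore zero. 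Unwinding the reductions yields $\mathrm{H}^0(\mathbb{CP}^n,\, S^2\Omega^1\otimes T) = 0$, and with it the corollary; the case $n=1$ is classical, as $\mathbb{CP}^1$ carries a unique holomorphic projective structure.

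I expect no serious obstacle here: the argument is a short cohomology computation on $\mathbb{CP}^n$. The only points requiring care are verifying that second symmetric powers preserve the subbundle inclusion coming from the dual Euler sequence, and keeping track of the line-bundle twists so that the reduction lands precisely on the vanishing group $\mathrm{H}^0(\mathbb{CP}^n,\, T \otimes \mathcal{O}(-2))$. The conceptual input that turns these vanishings into a \emph{uniqueness} statement, rather than a mere non-existence statement, is the availability of the standard flat connection as the base point in the identification furnished by Lemma \ref{lemma}.
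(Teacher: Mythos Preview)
Your proof is correct and reaches the same reduction as the paper: both arguments invoke Lemma \ref{lemma} and then show that
\[
\mathrm{H}^0(\mathbb{CP}^n,\, S^2(T^*\mathbb{CP}^n)\otimes T\mathbb{CP}^n)\,=\,0
\]
for $n\geq 2$, treating $n=1$ separately. The difference lies in how this vanishing is obtained. The paper argues analytically: the Fubini--Study metric is K\"ahler--Einstein, hence induces a Hermitian--Einstein structure on $S^2(T^*\mathbb{CP}^n)\otimes T\mathbb{CP}^n$, and since this bundle has negative degree it admits no nonzero holomorphic section. You instead give a purely algebraic computation via the Euler sequence, embedding $S^2\Omega^1$ into a sum of copies of $\mathcal{O}(-2)$ and then using the long exact sequence to show $\mathrm{H}^0(T(-2))=0$. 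Your route is more elementary in that it avoids Hermitian--Einstein theory and the Kobayashi--Hitchin circle of ideas, relying only on standard line-bundle cohomology on projective space; the paper's route, on the other hand, is a one-line appeal to stability once the K\"ahler--Einstein input is granted, and it generalizes more readily to other Hermitian symmetric situations. Both are perfectly valid here.
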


\begin{proof}
If $n\,=\,1$, a holomorphic projective connection is automatically flat. Since the complex projective line is simply 
connected, the developing map for a projective structure produces an
isomorphism of the projective structure with the standard projective structure of ${\mathbb 
C}{\mathbb P}^1$.

Now assume that $n\,>\,1$.
We will prove that
\begin{equation}\label{cz}
{\rm H}^0({\mathbb C}{\mathbb P}^n,\, S^2(T^*{\mathbb C}{\mathbb P}^n)\otimes
T{\mathbb C}{\mathbb P}^n)\,=\, 0\, .
\end{equation}
The Fubini--Study metric on ${\mathbb C}{\mathbb P}^n$ is
K\"ahler--Einstein. So the Hermitian structure on $S^2(T^*{\mathbb C}{\mathbb P}^n)\otimes T{\mathbb C}{\mathbb P}^n$
induced by the Fubini--Study metric is Hermitian--Einstein. On the other hand,
$$
{\rm degree}(S^2(T^*{\mathbb C}{\mathbb P}^n)\otimes T{\mathbb C}{\mathbb P}^n)\, <\, 0
$$
\cite[p.~50, Theorem 2.2.1]{LT}.
Hence \eqref{cz} holds by stability. Therefore, Lemma \ref{lemma} implies that ${\mathbb C}{\mathbb P}^n$ has
a unique holomorphic projective connection: it is the standard one.
\end{proof}

The following proposition (statement (ii)) studies holomorphic projective connections on compact complex tori.
Statement (i), which was already known in the broader context of holomorphic Cartan geometries
(see for example, \cite{BM1,BM2,BD4,Du2}), shows that compact complex tori cover the case of K\"ahler manifolds
with trivial first Chern class.

\begin{proposition}\label{abelian}\mbox{}
\begin{enumerate}
\item[(i)] Let $M$ be a compact K\"ahler manifold with $c_1(M)\,=\, 0$ bearing a holomorphic 
projective connection $\phi$. Then $M$ admits a finite unramified cover $T$ which is a
compact complex torus; the pull-back of $\phi$ on $T$ is projectively equivalent
to a (translation invariant) holomorphic torsionfree affine connection.

\item[(ii)] A generic holomorphic projective connection on a compact complex torus of complex dimension
$n\,>\,2$ is not 
projectively flat.
\end{enumerate}
\end{proposition}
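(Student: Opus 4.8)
For part (i), the plan is to first produce a global representative and then identify the cover. Since $M$ is compact K\"ahler with $c_1(M)=0$, Lemma~\ref{trivial class} guarantees that $\phi$ is projectively equivalent to a globally defined holomorphic torsionfree affine connection $\nabla$ on $M$. In particular $M$ carries a holomorphic affine connection, so by the theorem of Inoue--Kobayashi--Ochiai \cite{IKO} there is a finite unramified cover $\varpi\colon T\to M$ with $T$ a compact complex torus. Pulling $\nabla$ back to $T$ gives a holomorphic torsionfree affine connection $\widetilde\nabla$ representing $\varpi^*\phi$. Writing $\widetilde\nabla$ in the flat coordinates $z_1,\dots,z_n$ of the universal cover $\mathbb C^n$ of $T$, its Christoffel symbols $\Gamma^k_{ij}$ are holomorphic functions on the compact torus $T$, hence constant; thus $\widetilde\nabla$ is translation invariant, which is exactly the assertion.

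For part (ii), the plan is to reduce to a finite-dimensional, polynomial genericity statement and then exhibit a single nonflat example. By Lemma~\ref{lemma}, the projective equivalence classes of holomorphic projective connections on $T$ are parametrized by $\mathrm H^0(T,(S^2(T^*T)\otimes TT)_0)$. Since $TT$ is holomorphically trivial, the bundle $S^2(T^*T)\otimes TT$ is trivial and its holomorphic sections are the constant ones; identifying $V:=T_0T\cong\mathbb C^n$, the parameter space is the finite-dimensional vector space $(S^2(V^*)\otimes V)_0=\ker(\mathrm{div})$ from \eqref{et0}. Each such class is represented by a translation invariant torsionfree connection with constant Christoffel symbols $\Gamma$; writing $A_i$ for the endomorphism $(A_i)^k_j=\Gamma^k_{ij}$, the curvature is $R(\partial_i,\partial_j)=[A_i,A_j]$, so both $R$ and (via \eqref{wt} and its dimension-$n$ analogue) the projective Weyl tensor $W$ are homogeneous quadratic polynomial maps of $\Gamma$ on the parameter space.

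Consequently the set of nonflat classes is $\{\Gamma : W(\Gamma)\neq 0\}$, a Zariski-open subset of $(S^2(V^*)\otimes V)_0$; it is either empty or dense, so it suffices to produce one $\Gamma$ with $W(\Gamma)\neq 0$ for each $n\geq 3$. I would write down explicit constant Christoffel symbols that genuinely couple three coordinate directions, compute $R(\partial_i,\partial_j)=[A_i,A_j]$, the Ricci tensor \eqref{ct2}, and then $W$ via \eqref{wt}. The hard part is this explicit verification, together with the dimensional subtlety that the example must not be supported on a two-dimensional coordinate subspace: for $n=2$ the projective Weyl tensor vanishes identically (consistent with the flatness of all projective connections on surfaces \cite{Du1}), so an essentially two-dimensional $\Gamma$ would yield $W\equiv 0$. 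Once a genuinely three-dimensional example with $W\neq 0$ is in hand, Zariski-openness gives that the generic translation invariant projective connection on $T$ is nonflat for every $n>2$.
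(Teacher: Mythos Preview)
Your strategy for both parts matches the paper's exactly. Part (i) is complete and, if anything, slightly more streamlined than the paper: you invoke the K\"ahler clause of Lemma~\ref{trivial class} directly on $M$ to get a global affine representative, whereas the paper first passes via Yau and Bogomolov--Beauville to a cover with trivial canonical bundle before applying the lemma. Your argument that the Christoffel symbols on $T$ are constant (holomorphic functions on a compact torus) is the same as the paper's.

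For part (ii) your framework is also identical to the paper's: translation-invariant connections are parametrized by constant Christoffel symbols, $R(\partial_i,\partial_j)=[A_i,A_j]$ makes $W$ a homogeneous quadratic polynomial in $\Gamma$, and nonflatness is a Zariski-open condition, so one nonflat example suffices. However, you stop precisely where the paper's real work begins. You write ``I would write down explicit constant Christoffel symbols\ldots'' but you do not actually exhibit any $\Gamma$ with $W(\Gamma)\neq 0$; without that single example your Zariski-openness argument is vacuous. The paper fills this gap by introducing a concrete five-parameter family $\nabla^{A,B,C,D,E}$ on $\mathbb T^3$, carrying out a lengthy explicit computation of $R$, $\mathrm{Ricci}$, and $W$, and showing that $W\neq 0$ if and only if $C\neq D$. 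For $n>3$ the paper then embeds this three-dimensional example as a totally geodesic coordinate subspace, so a single $n=3$ computation handles all higher dimensions at once. Your proposal is thus a correct outline, but it is not a proof until you produce and verify such an example.
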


\begin{proof} (i) By Calabi's conjecture proved by Yau, \cite{Ya}, $M$ admits a Ricci flat
K\"ahler metric. Using this, Bogomolov--Beauville decomposition theorem,\cite{Be,Bo1}, shows that
$M$ admits a finite unramified cover $\psi\, :\, M'\, \longrightarrow\, M$ such that the canonical
line bundle $K_{M'}$ is trivial. Now
Lemma \ref{trivial class} says that the holomorphic projective connection $\psi^*\phi$ is 
represented by a holomorphic affine connection on $M'$.
Since $M'$ is K\"ahler, it is known that $M'$ admits a finite unramified cover $T$ which is a complex torus
\cite{IKO}. Any holomorphic affine connection on $T$ is known to
be translation invariant \cite{IKO} (see also the proof below).

(ii) Let ${\mathbb T}^n$ be a compact complex torus of complex dimension $n\, >\,2$. Since the canonical bundle
$K_{{\mathbb T}^n}$ of the torus is 
trivial, Lemma \ref{trivial class} says that every holomorphic projective connection on ${\mathbb T}^n$
is represented by some globally defined holomorphic affine connection.

Denote by $(z_1,\, \cdots,\, z_n)$ a holomorphic linear coordinate function on ${\mathbb T}^n$ and by $\nabla_0$ the
standard flat holomorphic affine connection of ${\mathbb T}^n$ (induced by that of ${\mathbb C}^n$ using this
coordinate function). Any 
holomorphic affine connection on ${\mathbb T}^n$ is of the form $\nabla_0 + \Theta$, where
$\Theta\,\in\, {\rm H}^0({\mathbb T}^n,\, 
S^2(T^* {\mathbb T}^n) \otimes T{\mathbb T}^n)$. Since the holomorphic tangent bundle $T{\mathbb T}^n$ is
holomorphically trivial, such a 
section $\Theta$ is a sum of terms of the form $f_{ij}^k dz_idz_j \frac{\partial}{\partial z_k},$ where $f_{ij}^k$ 
are constant functions on ${\mathbb T}^n$ with values
in $\mathbb C$. The coefficients $f_{ij}^k $ are classically called the Christoffel symbols of the 
affine connection. In particular, the holomorphic affine connection $\nabla_0 + \Theta$ is translation-invariant.

First assume $n\,=\,3$. For the convenience of computations, let us denote by $(z_1,\,z_2,\, \tau)$ 
the holomorphic linear coordinate function on ${\mathbb T}^3$.

Denote by $\Theta_{A, B,C,D,E} $ the holomorphic section of $S^2(T^*{\mathbb T}^3) \otimes T{\mathbb T}^3$
corresponding to the following Christoffel symbols:
$$f_{\tau , \tau }^{z_1}\,=\,A,\ \ f_{\tau , \tau }^{z_2}\,=\,B\, ,$$
$$f_{z_1,z_1}^{z_1}\,=\,2f_{\tau ,z_1}^{\tau }\,=\,2f_{z_1,z_2}^{z_1}\,=\, C\, ,$$
$$f_{z_2 ,z_2}^{z_2}\,=\,2f_{\tau ,z_2}^{\tau }\,=\,2f_{z_1,z_2}^{z_2}\,=\,D\, ,$$
$$f_{\tau , \tau }^{\tau}\,=\,2f_{z_1, \tau}^{z_1 }\,=\,2f_{z_2, \tau }^{z_2} \,=\,E\, ,$$
with $A,\,B,\,C,\,D,\,E \,\in\, \mathbb C$; all other remaining coefficients are set to zero.

Consider the associated holomorphic affine connection $$\nabla^{A,B,C,D,E}\,=\, \nabla_0 +
\Theta_{A,B,C,D,E}\, .$$

We will prove in Appendix the  following  technical 

\begin{lemma} \label{lemma appendix} $\nabla^{A,B,C,D,E}$ is projectively flat on ${\mathbb T}^3$ if and only if $C\,=\,D$.

\end{lemma}

Equivalently, the Weyl projective tensor $W$ of    $\nabla^{A,B,C,D,E}$  vanishes identically on  ${\mathbb T}^3$  if and only if $C=D$.
In particular,  for generic $A,B,C,D,E$ the connection  $\nabla^{A,B,C,D,E}$    is not projectively flat on  ${\mathbb T}^3$.

Now consider the general connection on ${\mathbb T}^3$ given by $$\nabla\,=\,\nabla_0 + \Theta\, ,$$
where $\Theta \,\in\, {\rm H}^0({\mathbb T}^3,\, S^2(T^*{\mathbb T}^3)\otimes T{\mathbb T}^3)$. The
vanishing of the Weyl tensor
for $\nabla$ is an algebraic (quadratic) 
equation in the Christoffel symbols $f_{ij}^k$. Since all connections $\nabla^{A,B,C,D,E}$ with 
$C\,\neq\, D$ have nonzero Weyl tensor, the space of flat projective connections has positive codimension in the 
space of all connections. Consequently, the general connection is not flat.

Now consider the case where the complex dimension of the torus is $n\,>\, 3$. Denote by
$(z_1,\,z_2,\, \tau =z_3,\, z_4,\, \cdots,\, z_n)$ a global linear
holomorphic coordinate function on ${\mathbb T}^n$. Consider the holomorphic projective connection represented by
$\nabla_n^{A,B,C,D,E}\,=\, \nabla_0 + \Theta_n^{A,B,C,D,E}$, where $$\Theta_n^{A,B,C,D,E}
\,\in \,{\rm H}^0({\mathbb T}^n,\, S^2(T^*{\mathbb T}^n)\otimes T{\mathbb T}^n)$$
is defined below by the Christoffel symbols:
$$f_{\tau , \tau }^{z_1}\,=\, A, f_{\tau , \tau }^{z_2}\,=\, B\, ,$$
$$f_{z_1,z_1}^{z_1}\,=\,2f_{\tau ,z_1}^{\tau }=2f_{z_1,z_2}^{z_1}\,=\,C\, ,$$
$$f_{z_2 ,z_2}^{z_2}\,=\,2f_{\tau ,z_2}^{\tau }\,=\,2f_{z_1,z_2}^{z_2}\,=\,D\, ,$$
$$f_{\tau , \tau }^{\tau}\,=\,2f_{z_1, \tau}^{z_1 }\,=\,2f_{z_2, \tau }^{z_2} \,=\, E\, ,$$
where $A,\,B,\,C,\,D,\,E \,\in \,\mathbb C$; the remaining symbols are trivial.

Identify ${\mathbb C}^n$ with the universal cover of ${\mathbb T}^n$, and equip ${\mathbb C}^n$ with the
connection given by the connection $\nabla^{A,B,C,D,E}_n$ on ${\mathbb T}^n$ using this identification.
By construction, the three dimensional linear subspace
$$
\{(z_1,\, \cdots ,\, z_n)\, \in \, {\mathbb C}^n\, \mid\, z_4\,=\, \ldots\, =\,z_n\,=\,0\}
\, \subset\, {\mathbb C}^n
$$
is totally geodesic, and the induced connection on this subspace
is the connection $\nabla^{A,B,C,D,E}$ studied earlier. For 
$C\,\neq\, D$, the connection $\nabla^{A,B,C,D,E}$ is not projectively flat, and hence
$\nabla_n^{A,B,C,D,E}$ is not projectively flat either.

The same argument as in the dimension three
case proves that the generic connection $\nabla$ on 
${\mathbb T}^n$ (meaning $\nabla$ lying in a Zariski dense open set, whose complement of a proper algebraic 
subvariety defined by quadratic equations) is not flat.
\end{proof}

Ye proved in \cite{Ye} that Fano manifolds bearing a holomorphic projective connection are flat, 
isomorphic to the standard complex projective space. This was generalized in \cite{JR1} to compact K\"ahler 
manifolds admitting nontrivial rational curves. More recently this result was extended to the
more general context of 
holomorphic Cartan geometries (see Theorem 2 in \cite{BM}).

Moreover, Hwang and Mok proved in \cite{HM} (Theorem 2 and Proposition 8) that uniruled projective manifolds 
bearing a holomorphic $G$--structure modeled on a Hermitian symmetric spaces of rank $\,\geq\, 2$ are flat, and globally 
isomorphic to the corresponding Hermitian symmetric space endowed with its standard $G$--structure.

\subsection{Proof of Theorem \ref{classification}}\label{pt1}

Let $M$ be a compact K\"ahler--Einstein manifold of complex dimension $n\,>\,1$ endowed with a holomorphic projective 
connection. By the classification result of Kobayashi and Ochiai, \cite{KO,KO1}, $M$ is biholomorphic to one of the 
standard models: either to ${\mathbb C}{\mathbb P}^n$, or to a compact quotient of ${\rm \mathbb H}_{\mathbb C}^n$ by 
a torsion-free discrete subgroup of ${\rm S}{\rm U}(n,1)$ or to an \'etale quotient of a compact complex $n$-torus.

Corollary \ref{cor2} proves that the only holomorphic projective connection on ${\mathbb 
C}{\mathbb P}^n$ is the flat standard one (this was already known; see \cite{Ye,JR1,BM}).

Corollary \ref{cor1} proves that for $n\,>\,1$, the only holomorphic projective connection on compact quotients of 
${\rm \mathbb H}_{\mathbb C}^n$ by a torsion-free discrete subgroup of ${\rm S}{\rm U}(n,1)$ is the flat standard 
one. The result was already known for flat holomorphic projective connections \cite{MY} (see also \cite{Kl1});
Corollary \ref{cor1} uses arguments from their proof.

Proposition \ref{abelian} shows that a holomorphic projective connection on a complex compact $n$-torus
is represented by a global translation-invariant torsionfree holomorphic affine connection. Moreover, Proposition 
\ref{abelian} proves that, for $n\,>\,2$, the generic translation invariant holomorphic affine connection on a compact 
complex $n$-torus is not projectively flat. This completes the proof of Theorem \ref{classification}.

\section{Projective connections on Kuga-Shimura threefolds}\label{section 4}

As mentioned in the introduction, complex projective threefolds admitting holomorphic projective connections have 
been classified by Jahnke and Radloff in \cite{JR1}. Their result says that the only examples are either the 
standard ones or an \'etale quotient of a Kuga--Shimura projective threefold (see Section \ref{section 2}). The 
holomorphic projective connections on the standard examples were studied in Section \ref{section 3}.

In this Section we study holomorphic projective connections on Kuga--Shimura threefolds and prove Theorem 
\ref{main} and Corollary \ref{cor Lich}.

\begin{proposition}\label{parameters}
The projective equivalence classes of holomorphic projective connections on a Kuga--Shimura projective threefold 
$M\, \longrightarrow\, \Sigma$ are parametrized by a complex affine space on the vector space
${\rm H}^0(\Sigma,\, K^{\frac{3}{2}}_{\Sigma})^{\oplus 2}$, 
where $K_{\Sigma}$ is the canonical bundle of the base Riemann surface $\Sigma$ (see \eqref{ksf}).
\end{proposition}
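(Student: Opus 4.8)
The plan is to combine Lemma \ref{lemma} with the fibred structure of $M$. By Lemma \ref{lemma}, the projective equivalence classes of holomorphic projective connections on $M$ form an affine space on the vector space ${\rm H}^0(M,\, (S^2(T^*M)\otimes TM)_0)$, so it suffices to identify this space with ${\rm H}^0(\Sigma,\, K_{\Sigma}^{3/2})^{\oplus 2}$. I would work on the universal cover $\mathcal H\times\mathbb{C}^2$ with coordinates $(\tau,\, z_1,\, z_2)$, on which $M$ is the quotient by the action \eqref{s2} of $\Gamma\ltimes\Lambda$. The automorphy factor $(c\tau+d)$ defines a theta characteristic $\mathcal L=K_{\Sigma}^{1/2}$ on $\Sigma=\mathcal H/\Gamma$ (so $\mathcal L^{\otimes 2}=K_{\Sigma}$); from \eqref{s2} one reads off that $d\tau$ transforms by $(c\tau+d)^{-2}$ and each fibre form $dz_k$ by $(c\tau+d)^{-1}$ modulo $d\tau$, giving $\pi^*T^*\Sigma=\pi^*\mathcal L^{2}$, $T^*_{M/\Sigma}\cong\pi^*\mathcal L\otimes\mathbb{C}^2$, and dually $T_{M/\Sigma}\cong\pi^*\mathcal L^{-1}\otimes\mathbb{C}^2$, $\pi^*T\Sigma=\pi^*\mathcal L^{-2}$.

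First I would reduce the coefficients. A holomorphic section of $S^2(T^*M)\otimes TM$ pulls back to a $(\Gamma\ltimes\Lambda)$-invariant tensor whose components, in the frame $\{d\tau,\,dz_1,\,dz_2\}$ and $\{\del{\tau},\,\del{z_1},\,\del{z_2}\}$, are holomorphic functions on $\mathcal H\times\mathbb{C}^2$; invariance under the translation lattice $\Lambda$ forces each component to be constant along the abelian-surface fibres, hence a function of $\tau$ alone. Assigning the $\mathcal L$-weight $2$ to $d\tau$, $1$ to each $dz_k$, $-2$ to $\del{\tau}$ and $-1$ to each $\del{z_k}$, every monomial gets a total weight $w\in\{0,1,2,3\}$, and this filtration is $\Gamma$-invariant. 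The associated graded of $(S^2(T^*M)\otimes TM)_0$ is $\bigoplus_{w=0}^{3}\pi^*(K_{\Sigma}^{w/2})^{\oplus m_w}$ with $(m_0,m_1,m_2,m_3)=(3,6,4,2)$, so that, using $\pi_*\mathcal O_M=\mathcal O_\Sigma$, one has ${\rm H}^0(\mathrm{gr}_w)={\rm H}^0(\Sigma,\, K_{\Sigma}^{w/2})^{\oplus m_w}$. The top piece $\mathrm{gr}_3$, spanned by $d\tau^2\otimes\del{z_1}$ and $d\tau^2\otimes\del{z_2}$, carries no off-diagonal (mixing) part under $\Gamma$ and is a $\Gamma$-invariant subbundle isomorphic to $\pi^*(K_{\Sigma}^{3/2})^{\oplus 2}$; this already produces the claimed two copies of ${\rm H}^0(\Sigma,\, K_{\Sigma}^{3/2})$.

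The heart of the argument is to show nothing else survives. Using the explicit Jacobian of \eqref{s2}, the $\Gamma$-action is upper triangular for the weight filtration: on each graded piece it acts by the pure automorphy factor, while its off-diagonal part strictly raises the weight and is \emph{linear} in $z_1,z_2$ (it is, up to scale, the Kodaira--Spencer class of the non-isotrivial family $M\to\Sigma$, arising from the term $-cz_k(c\tau+d)^{-2}$ in the transform of $\del{\tau}$). Writing an invariant section as $s=s_0+s_1+s_2+s_3$ by weight, the invariance equation in weight $w+1$ reads $\mathrm{pure}(s_{w+1})+\mathrm{mix}_{w\to w+1}(s_w)=s_{w+1}$; since $s_{w+1}$ is $z$-independent while $\mathrm{mix}_{w\to w+1}(s_w)$ is $z$-linear, the mixing term must vanish and $\mathrm{pure}(s_{w+1})=s_{w+1}$ then says $s_{w+1}$ is a weight-$(w{+}1)$ modular form. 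The decisive point is that the kernel of each naive mixing map on the full bundle is exactly the trace direction ${\rm Im}(\mathcal J)$: for example, in weight $2$ the vector $d\tau^2\otimes\del{\tau}+\sum_k d\tau\,dz_k\otimes\del{z_k}=\mathcal J(d\tau)$ is the unique kernel element, so $\mathrm{mix}_{2\to3}$ becomes an isomorphism $4\to4$ after passing to the trace-free bundle. Hence on $(S^2(T^*M)\otimes TM)_0$ every $\mathrm{mix}_{w\to w+1}$ is injective, and solving the system successively in weights $1,2,3$ forces $s_0=s_1=s_2=0$, leaving $s=s_3\in{\rm H}^0(\Sigma,\, K_{\Sigma}^{3/2})^{\oplus 2}$.

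I expect the main obstacle to be precisely this last kernel computation, namely the verification that the trace-free projection exactly annihilates the kernel of each weight-raising mixing map (equivalently, the nondegeneracy of the Kodaira--Spencer map of the family). This is what rules out the a priori large contributions ${\rm H}^0(\Sigma,\, K_{\Sigma})$ in weight $2$ (dimension of order $g$) and the theta-characteristic sections ${\rm H}^0(\Sigma,\, K_{\Sigma}^{1/2})$ in weight $1$, which otherwise would survive as invariant sections. Once the injectivity is established at all three levels, the identification ${\rm H}^0(M,\, (S^2(T^*M)\otimes TM)_0)\cong{\rm H}^0(\Sigma,\, K_{\Sigma}^{3/2})^{\oplus 2}$ is immediate, and the Proposition follows from Lemma \ref{lemma}.
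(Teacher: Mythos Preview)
Your approach and the paper's are the same at the structural level: both invoke Lemma~\ref{lemma} to reduce to computing ${\rm H}^0(M,(S^2(T^*M)\otimes TM)_0)$, pass to the universal cover $\mathcal H\times\mathbb C^2$, use $\Lambda$-invariance to reduce coefficients to functions of $\tau$ alone, and then analyse $\Gamma$-invariance. The difference is purely in how the $\Gamma$-step is organized. The paper matches coefficients monomial by monomial---for instance, equating the coefficient of $d\tau\otimes d\tau\otimes\del{z_1}$ yields a single long equation, polynomial in $z_1,z_2$, whose vanishing degree by degree produces all the relations $f_{z_i z_j}^\tau=0$, $f_{z_1 z_1}^{z_1}=2f_{z_1\tau}^\tau$, $f_{\tau\tau}^\tau=2f_{z_i\tau}^{z_i}$, etc., and a second pass then forces everything below the top weight to be the trace direction $\mathcal J(d\tau)$. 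Your weight filtration repackages exactly this: the ``injectivity of $\mathrm{mix}_{w\to w+1}$ on the trace-free bundle'' that you flag as the main obstacle is precisely the content of the paper's explicit relations. What your language buys is a clear explanation of \emph{why} only weight~$3$ survives (non-isotriviality, i.e.\ the Kodaira--Spencer class is nondegenerate); what the paper's direct computation buys is that no separate injectivity check is needed---the relations fall out of a single coefficient comparison.

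Two technical points to tighten. First, the $\Lambda$-action on the frame is not literally trivial: the element $(m,n,k,l)$ sends $dz_i\mapsto dz_i+\text{(integer)}\,d\tau$ and $\del\tau\mapsto\del\tau+m\,\del{z_1}+k\,\del{z_2}$, so ``$\Lambda$-invariance forces each component to be constant along the fibres'' needs a short upward induction on weight rather than a one-line Liouville argument (the paper glosses over this too). Second, your invariance equation in weight $w+1$ suppresses the higher $z$-degree contributions from weight-$w'$ pieces with $w'<w$ (these appear as $z$-quadratic and $z$-cubic terms in the paper's coefficient of $d\tau^2\otimes\del{z_1}$); they are harmless provided the induction is run from the bottom so that $s_0,\dots,s_{w-1}$ have already been killed before you read off the $z$-linear constraint on $s_w$.
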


\begin{proof}
Let
\begin{equation}\label{vp}
\varpi\, :\, \widetilde{M}\,=\,\mathcal H \times {\mathbb C}^2\, \longrightarrow\, M
\end{equation}
be the universal cover of $M$. Let $\phi$ be a holomorphic projective connection on $M$.
By Lemma \ref{trivial class}, the holomorphic projective connection $\varpi^*\phi$ 
on $\widetilde{M}$ is represented by a torsionfree holomorphic affine connection $\nabla$
on $\widetilde{M}$. 

Let $\nabla_0$ be the standard flat affine connection of $\mathcal H \times {\mathbb C}^2$ (seen as an
open subset in ${\mathbb C}^3$). Then
$$\Theta\,=\, \nabla - \nabla_0 \in {\rm H}^0(\widetilde{M},\, S^2(T^*\widetilde{M}) \otimes T\widetilde{M})\, ,$$
because both $\nabla$ and $\nabla_0$ are torsionfree.

We saw in the proof of Lemma \ref{lemma} that ${\mathbb F}(\Theta)\, \in\,
{\rm H}^0(\widetilde{M},\, (S^2(T^*\widetilde{M})\otimes T\widetilde{M})_0)^{\pi_1(M)}$, where
${\mathbb F}$ is the projection in \eqref{T}. Conversely, every element of
${\rm H}^0(\widetilde{M},\, (S^2(T^*\widetilde{M})\otimes T\widetilde{M})_0)^{\pi_1(M)}$
defines a holomorphic 
projective connection on $M$. Consequently, the space of projective equivalence classes of holomorphic projective 
connections on $M$ is identified with
${\rm H}^0(\widetilde{M},\, (S^2(T^*\widetilde{M})\otimes T\widetilde{M})_0)^{\pi_1(M)}$
(see the proof of Lemma \ref{lemma}).
We will first compute
$$
{\rm H}^0(\widetilde{M},\, S^2(T^*\widetilde{M})\otimes T\widetilde{M})^{\pi_1(M)}\, ,
$$
and then compute the locus in it of the trace-free ones.

As seen in Section \ref{ss Shimura}, $\pi_1(M)$ is a semi-direct product $\Gamma 
\ltimes \Lambda$, where $\Gamma$ is a Fuchsian group and $\Lambda \,\simeq\, {\mathbb Z}^{\oplus 4}$. We recall
from \eqref{s2} that the action of $\pi_1(M)$ on $\widetilde{M}$ is given by:
\begin{equation}\label{ss2}
(\gamma,\, \lambda) \cdot (\tau, z_1, z_2) \,\longmapsto\,
\left(\frac{a\tau +b}{c \tau +d},\, \frac{z_1+ m \tau +n}{c \tau +d},\, \frac{z_2 + k \tau +l}{c \tau +d}\right)
\end{equation}
for all $\gamma\,= \,\begin{pmatrix} a & b \\
c & d\end{pmatrix}\,\in\,\Gamma$ and $\lambda\,=\,(m,\,n,\,k,\,l)\,\in\, \Lambda$.
It follows that the action of the same element $ (\gamma,\, \lambda) \in \pi_1(M)$ on the standard
basis of holomorphic one-forms on $\widetilde{M}$ is given by:
$$(d \tau,\, dz_1,\, dz_2)
$$
$$
\,\longmapsto\, \left(\frac{1}{(c \tau +d)^2} d \tau,\, \frac{1}{c \tau +d}dz_1 -
\frac{cz_1-md + nc}{(c \tau +d)^2}d \tau, \, \frac{1}{c \tau +d}dz_2 - \frac{cz_2-kd+ lc}{(c \tau +d)^2}d
\tau\right)\, .$$
The action of the element $(\gamma,\, \lambda) \in \pi_1(M)$ on the dual basis is computed to be
the following:
$$(\frac{\partial}{\partial \tau},\, \frac{\partial}{\partial z_1},\, \frac{\partial}{\partial z_2})
\,\longmapsto \,\Big((c \tau +d)^2 \frac{\partial}{\partial\tau} + (c \tau +d)(cz_1 -md +nc)
\frac{\partial}{\partial z_1}
$$
$$
+\,(c \tau +d) (cz_2 -kd +lc) \frac{\partial}{\partial z_2},\, (c \tau +d) \frac{\partial}{\partial z_1},\,
(c \tau +d) \frac{\partial}{\partial z_2}\Big)\, .
$$

The expression for the general section of $S^2(T^*\widetilde{M}) \otimes T\widetilde{M}$ is a sum of type 
$$\sum_{i,j,k=1}^3f_{ij}^kdx_idx_j \frac{\partial}{\partial x_k}\, ,$$ where $x_1,\, x_2,\, x_3$ are chosen among the
global coordinates $(\tau,\, z_1,\, z_2) \,\in \,\mathcal H \otimes {\mathbb C}^2$ and $f_{ij}^k$ are holomorphic
functions defined on $\mathcal H \times {\mathbb C}^2$ such that $f_{ij}^k\,=\,f_{ji}^k$.
Therefore, in global coordinates $(\tau,\, z_1,\, z_2)$ of the universal cover $\widetilde{M}\,=\, \mathcal H \times 
{\mathbb C}^2$, the expression for the general section of $S^2(T^*\widetilde{M}) \otimes T\widetilde{M}$ is a sum 
of 18 terms:
$$f_{ \tau, \tau}^{z_1} d \tau \otimes d \tau \otimes \frac{\partial}{\partial z_1}+
f_{ \tau, \tau}^{z_2} d \tau \otimes d \tau \otimes \frac{\partial}{\partial z_2}
$$
$$
+ f_{ \tau, \tau}^{\tau} d \tau \otimes d \tau \otimes \frac{\partial}{\partial \tau} +
f_{z_1, \tau}^{z_1} dz_1 \otimes d \tau \otimes \frac{\partial}{\partial z_1}+
f_{z_2, \tau}^{z_2} d z_2 \otimes d \tau \otimes \frac{\partial}{\partial z_2}+\ldots $$
This tensor is trace-free if and only if the following three conditions hold:
$$f_{z_1,z_1}^{z_1} + f_{z_1, \tau}^{\tau } + f_{z_1,z_2}^{z_2} \,=\, 0\, ,$$
$$f_{z_2 ,z_1}^{z_1} + f_{z_2, \tau}^{\tau } + f_{z_2,z_2}^{z_2}\,=\, 0\, ,$$
$$f_{ \tau, z_1}^{z_1 } + f_{\tau , \tau }^{\tau}+ f_{\tau, z_2}^{z_2} \, =\,0\, .$$

The action of any $\lambda\,=\,(m,\,n,\,k,\, l) \,\in\, {\mathbb Z}^{\oplus 4}\,=\, \Lambda$ on
$\widetilde M$ is given by
$$
\lambda \cdot (\tau,\, z_1,\, z_2) \,\longmapsto\, (\tau,\, z_1+ m \tau +n,\, z_2+k\tau+l)
$$
(see \eqref{ss2}). In particular, the action of the normal subgroup $\Lambda\, \subset\, \pi_1(M)$
is trivial on the $\tau$-coordinate and it preserves the fibration defined by the projection
\begin{equation}\label{ef}
\mathcal H \times {\mathbb C}^2 \,\longrightarrow\, \mathcal H\, .
\end{equation}
The symbol functions $f_{ij}^k$ are evidently invariant under the action of
$\Lambda$. In particular, the functions $f_{ij}^k$ are constants on the fibers of the
projection in \eqref{ef}, i.e., every $f_{ij}^k$ depends only of the parameter $\tau$.

We now compute the components of the tensor $\Theta$ enforcing the invariance condition
under the action $\pi_1(M)$.

Take any $(\gamma,\, \lambda)\, \in\, \pi_1(M)$.
Identifying the coefficient of $d z_1 \otimes dz_2 \otimes \frac{\partial}{\partial z_1}$ in the
expressions of $\Theta$ and $(\gamma,\, \lambda)^*\Theta$ we get:
$$f_{z_1,z_2}^{z_1}(\tau) \,=\,\frac{1}{c \tau+d} f_{z_1,z_2}^{z_1} (\frac{a \tau +b}{c \tau +d})\, .$$
This implies that
$f_{z_1,z_2}^{z_1}$ is a holomorphic section of $K_{\Sigma}^{\frac{1}{2}}$, meaning the holomorphic weighted-form
$f_{z_1,z_2}^{z_1}(\tau) d \tau$ descends to $M$ and
the descended section coincides with the pull-back of a holomorphic section of 
$K_{\Sigma}^{\frac{1}{2}}$ through the Kuga--Shimura fibration in \eqref{ksf}.

Identifying the coefficient of $d \tau \otimes d \tau \otimes \frac{\partial}{\partial z_1}$
in the expressions of $\Theta$ and $(\gamma,\, \lambda)^*\Theta$ we get:
$$f_{\tau, \tau}^{z_1}(\tau) \,=\,\frac{1}{(c \tau+d)^3} f_{\tau , \tau }^{z_1} (\frac{a \tau +b}{c \tau +d}) + 
\frac{1}{(c \tau+d)^3} f_{z_1 , z_1 }^{z_1} (\frac{a \tau +b}{c \tau +d}) (cz_1-md +nc)^2+ 
$$
$$
\frac{1}{(c \tau+d)^3} f_{\tau , \tau }^{\tau} (\frac{a \tau +b}{c \tau +d}) (cz_1-md +nc)+
\frac{1}{(c \tau+d)^3} f_{z_1 , z_1 }^{\tau} (\frac{a \tau +b}{c \tau +d}) (cz_1-md +nc)^3
$$
$$
+ \frac{1}{(c \tau+d)^3} f_{z_2 , z_2 }^{z_1} (\frac{a \tau +b}{c \tau +d})
(cz_2 -kd +lc)^2
$$
$$
+2 \frac{1}{(c \tau+d)^3} f_{z_1 , z_2 }^{\tau} (\frac{a \tau +b}{c \tau +d})
(cz_1-md +nc)^2(cz_2-kd+lc)
$$
$$
+ 2 \frac{1}{(c \tau+d)^3} f_{z_1 , \tau }^{z_1} (\frac{a \tau +b}{c \tau +d})
(-cz_1 + md -nc) + 2 \frac{1}{(c \tau+d)^3} f_{z_2 , \tau }^{z_1} (\frac{a \tau +b}{c \tau +d}) (-cz_2 + kd -lc) 
$$
$$
- 2\frac{1}{(c \tau+d)^3} f_{z_1 , \tau }^{\tau } (\frac{a \tau +b}{c \tau +d}) (cz_1 -md +nc)^2-
2\frac{1}{(c \tau+d)^3} f_{z_2 , \tau }^{\tau} (\frac{a \tau +b}{c \tau +d}) (cz_1 - md +nc) (cz_2-kd+lc)
$$
$$
+2 \frac{1}{(c \tau+d)^3} f_{z_1 , z_2}^{z_1} (\frac{a \tau +b}{c \tau +d}) (cz_1 - md + nc) (cz_2-kd+lc)\, .
$$

Since the polynomial in the right hand side of the above equation must be independent  of $z_1$ and $z_2$, 
this yields
$$ f_{z_1 , z_1}^{\tau}\,=\, f_{z_1 , z_2}^{\tau}\,=\,f_{z_2 , z_1}^{\tau}\,=\,f_{z_2 , \tau}^{z_1}\,=\,
f_{ \tau, z_2}^{z_1}=f_{z_1 , \tau}^{z_2}\,=\,f_{\tau, z_1}^{z_2}\,=\,0\, ,$$
$$ f_{z_1 , z_2}^{z_1}\,=\,f_{z_2 , z_1}^{z_1}\,=\,f_{\tau , z_2}^{\tau}\,=\,f_{z_2, \tau}^{\tau}\, , $$
$$ f_{z_1 , z_1}^{z_1}\,=\,2 f_{z_1 , \tau}^{\tau}\,=\,2 f_{\tau , z_1}^{\tau}\, ,$$
$$f_{\tau, \tau}^{\tau}\,=\,2 f_{z_1, \tau}^{z_1}\,=\, 2 f_{\tau, z_1} ^{z_1}\, .$$
Also, we get that $f_{\tau, \tau}^{z_1}$ is a holomorphic section of $K^{\frac{3}{2}}_{\Sigma}$,
meaning the holomorphic weighted-form
$f_{\tau, \tau}^{z_1}$ descends to $M$ and
the descended section coincides with the pull-back of a holomorphic section of
$K_{\Sigma}^{\frac{3}{2}}$ through the Kuga--Shimura fibration in \eqref{ksf}.

Performing the same computation for the coefficient of $d \tau \otimes d \tau \otimes \frac{\partial}{\partial 
z_2}$ we get that
$$f_{z_2 , z_2}^{\tau}\,=\,0\, ,$$
$$f_{z_1 , z_2}^{z_2}\,=\,f_{z_2 , z_1}^{z_2}\,=\,f_{\tau , z_1}^{\tau}\,=\,f_{z_1, \tau}^{\tau}\, ,$$
$$f_{z_2 , z_2}^{z_2}\, =\,2 f_{z_2 , \tau}^{\tau}\,=\,2 f_{\tau , z_2}^{\tau}\, ,$$
$$f_{\tau, \tau}^{\tau}\,=\,2 f_{z_2, \tau}^{z_2}\,=\, 2 f_{\tau, z_2} ^{z_2}\,.$$
We also get that $f_{\tau, \tau}^{z_2}$ is a holomorphic section of $K^{\frac{3}{2}}_{\Sigma}$ in the
sense explained above.

We now identify the coefficient of $d \tau \otimes d\tau \otimes \frac{\partial}{\partial \tau}$. Since we already 
know that $f_{z_1, z_1}^{\tau}\,=\,f_{z_2, z_2}^{\tau}\,=\,f_{z_1, z_2}^{\tau}\,=\,0$, the equation is:
$$f_{\tau, \tau}^{\tau}(\tau) \,=\, f_{\tau, \tau}^{\tau}(\frac{a \tau +b}{c\tau +d}) \frac{1}{(c\tau +d)^2} +
2 f_{z_1, \tau}^{\tau} (\frac{a \tau +b}{c \tau +d}) \frac{1}{(c \tau +d)^2}(-cz_1 +md-nc)
$$
$$
+ 2f_{z_2, \tau}^{\tau} (\frac{a \tau +b}{c \tau +d}) \frac{1}{(c \tau +d)^2}(-cz_2 +kd-lc)\, .$$
This implies that $f_{z_1, \tau}^{\tau}\,=\, f_{z_2, \tau}^{\tau}\,=\,0$, and $f_{\tau, \tau}^{\tau}$
is a holomorphic section of $K_{\Sigma}$ in the sense explained above.

Let us now identify the coefficient of $d z_1 \otimes dz_1 \otimes \frac{\partial}{\partial z_1}$. Since $f_{z_1, 
z_2}^{\tau}\,=\,0$, we get that
$$f_{z_1, z_1}^{z_1}(\tau) = f_{z_1, z_1}^{z_1}(\frac{a \tau +b}{c\tau +d}) \frac{1}{(c\tau 
+d)}\, .$$ It follows that $f_{z_1, z_1}^{z_1}$ is a holomorphic section of $K_{\Sigma}^{\frac{1}{2}}$
in the sense explained above.

Consider the coefficient of $d z_2 \otimes dz_2 \otimes \frac{\partial}{\partial z_2}$; we obtain the same 
result for $f_{z_2, z_2}^{z_2}$.

The conclusion is that the only non-vanishing coefficients are:
$$f_{\tau , \tau }^{\tau}\,=\,2f_{z_1, \tau }^{z_1} \,=\,2f_{z_2, \tau }^{z_2} \,\in\,
{\rm H}^0(\Sigma,\, K_{\Sigma})\, ;$$
$$f_{\tau , \tau }^{z_1},\,\ f_{\tau , \tau }^{z_2} \,\in\, {\rm H}^0(\Sigma,\,K^{\frac{3}{2}}_{\Sigma})\, .$$
Therefore, we have a canonical identification
\begin{equation}\label{ca}
{\rm H}^0(\Sigma,\,K^{\frac{3}{2}}_{\Sigma})^{\oplus 2}\oplus {\rm H}^0(\Sigma,\,K_{\Sigma})
\, \stackrel{\sim}{\longrightarrow}\,
{\rm H}^0(\widetilde{M},\, S^2(T^*\widetilde{M})\otimes T\widetilde{M})^{\pi_1(M)}\, .
\end{equation}
For $A,\, B\, \in\, {\rm H}^0(\Sigma,\,K^{\frac{3}{2}}_{\Sigma})$ and
$C\, \in\, {\rm H}^0(\Sigma,\,K_{\Sigma})$, the corresponding element
\begin{equation}\label{ca2}
\Theta_{A,B,C}\, \in\,
{\rm H}^0(\widetilde{M},\, S^2(T^*\widetilde{M})\otimes T\widetilde{M})^{\pi_1(M)}
\end{equation}
by the isomorphism in \eqref{ca} is given by
$$f_{\tau , \tau }^{z_1}\,=\, A\, ,\ \ f_{\tau , \tau }^{z_2}\,=\, B,\ \
f_{\tau , \tau }^{\tau}\,=\,2f_{z_1, \tau}^{z_1 }\,=\, 2f_{z_2, \tau }^{z_2} \,=\,C\, .$$

It can be shown that $\nabla^{A,B,C}$ and $\nabla^{A,B,0}$ are
projectively equivalent. Indeed, define 
$\phi_{\tau}$ to be the holomorphic one-form on $\mathcal H \times {\mathbb C}^2$ such that
$$\phi_{\tau} 
(\frac{\partial}{\partial \tau})\,=\,\frac{1}{2}C\ \ \text{ and }\ \
\phi_{\tau} (\frac{\partial}{\partial z_i})\,=\, 0$$
for $i=1,2$. It follows that
\begin{equation}\label{ti}
\nabla^{A,B,C}_XY -\nabla^{A,B,0}_XY\,=\, \phi_{\tau}(X)(Y)+ \phi_{\tau}(Y)X
\end{equation}
for all locally defined holomorphic 
vector fields $X,\, Y$; see \eqref{ai}. From \eqref{ti} it follows that $\nabla^{A,B,C}$ and
$\nabla^{A,B,0}$ are projectively equivalent.

For the section $\Theta_{A,B,C}$ in \eqref{ca2},
we have ${\rm div}(\Theta_{A,B,C})\,=\,2C d\tau$, where ${\rm div}$ is constructed as
in \eqref{x3}. Consequently, the trace-free condition ${\rm 
div}(\Theta_{A,B,C})\,=\,0$ holds if and only if $$C\,=\,0\, .$$
In fact, the trace-free part of $\Theta^{A,B,C}$ is 
$${\mathbb F} (\Theta_{A,B,C})\,=\, \Theta_{A,B,0} \,\in\,
H^0(\widetilde{M},\, (S^2(T^*\widetilde{M}) \otimes T^*\widetilde{M})_0)\, ,$$
where $\mathbb F$ is the projection in \eqref{T}.

Therefore, from the isomorphism in \eqref{ca} we have a canonical identification
\begin{equation}\label{ca3}
{\rm H}^0(\Sigma,\,K^{\frac{3}{2}}_{\Sigma})^{\oplus 2} \, \stackrel{\sim}{\longrightarrow}\,
{\rm H}^0(\widetilde{M},\, (S^2(T^*\widetilde{M})\otimes T\widetilde{M})_0)^{\pi_1(M)}
\end{equation}
that sends any $(A,\, B)\,\in\, {\rm H}^0(K^{\frac{3}{2}}_{\Sigma})^{\oplus 2}$
to $\Theta_{A,B,0}$ in \eqref{ca2}.

Recall that $\nabla^{A,B,0}\,=\, \nabla_0 + \Theta_{A,B,0}$ is a $\pi_1(M)$--invariant
holomorphic projective connection on 
$\widetilde{M}$. Therefore, it descends to a holomorphic projective connection $\nabla^{A,B}$ on $M$. Moreover, we 
have seen that the space of projective equivalence classes of holomorphic projective connections on $M$
is identified with ${\rm H}^0(\widetilde{M},\, (S^2(T^*\widetilde{M}) \otimes T^*\widetilde{M})_0)^{\pi_1(M)}$.
Therefore the projective connection $\nabla^{A,B}$ is projectively equivalent to $\nabla^{A',B'}$ if and 
only if $(A,\,B)\,=\,(A',\,B')$; we take the base projective connection
$\nabla^{0,0}$ to be the standard flat projective connection
induced by the open embedding of $\widetilde{M}$ in ${\mathbb C}{\mathbb P}^3$.
\end{proof}

\begin{remark}\label{KS} There are Kuga--Shimura projective threefolds over Shimura (compact) curves of 
arbitrarily large genus (see, for instance, \cite[Section 5]{KV}). Using Riemann-Roch theorem we deduce that 
$\dim {\rm H}^0(\Sigma,\, K^{\frac{3}{2}}_{\Sigma})^{\oplus 2}\,=\, 4(\text{genus}(\Sigma)-1)$, so it
can be arbitrarily large. This implies that the space of projective classes of 
holomorphic projective connections on a Kuga--Shimura projective threefold can have dimension arbitrarily large.
The next Proposition \ref{curvature} shows that all these holomorphic projective connections are flat. 
\end{remark}

\begin{proposition} \label{curvature}
Let $M$ be a projective Kuga--Shimura threefold which fibers over a Shimura curve $\Sigma$. Then all holomorphic 
projective connections $\phi$ on $M$ are flat. The fibers of the Kuga--Shimura fibration are (flat) totally 
geodesic for $\phi$.
\end{proposition}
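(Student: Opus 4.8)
The plan is to exploit the explicit description of all holomorphic projective connections obtained in Proposition \ref{parameters}. That result shows that every projective equivalence class on $M$ admits a representative affine connection $\nabla^{A,B}$ descending from
$$
\nabla^{A,B,0}\,=\,\nabla_0+\Theta_{A,B,0}
$$
on the universal cover $\widetilde{M}\,=\,\mathcal H\times{\mathbb C}^2$, whose only nonvanishing Christoffel symbols are $f_{\tau,\tau}^{z_1}\,=\,A(\tau)$ and $f_{\tau,\tau}^{z_2}\,=\,B(\tau)$, with $A$ and $B$ depending only on $\tau$. Since the Weyl projective tensor is a projective invariant, it is enough to compute it for this single representative; and because the representative is so sparse, it is cleanest to compute its full affine curvature $R$ directly and show that $R\,\equiv\,0$.

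First I would record two structural facts about $\nabla^{A,B,0}$. Every Christoffel symbol carrying a lower $z_1$ or $z_2$ index vanishes, so $\nabla_{\del{z_i}}$ annihilates each coordinate vector field. Moreover, the only nonzero iterated covariant derivative of a coordinate field that can occur is
$$
\nabla_{\del{\tau}}\del{\tau}\,=\,A\,\del{z_1}+B\,\del{z_2},
$$
and its coefficients depend only on $\tau$, so it too is annihilated by $\nabla_{\del{z_i}}$. Substituting these facts into the curvature formula \eqref{ct}, the quadratic terms $\Gamma\cdot\Gamma$ drop out identically (any nonzero product would require a factor carrying a lower $z_i$ index), while the surviving first-derivative terms either vanish because the nonzero symbols are $\tau$-only or cancel by the antisymmetry of $R$ in its first two arguments. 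One concludes $R\,\equiv\,0$ on $\widetilde M$.

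From $R\,\equiv\,0$ it follows immediately that the Weyl projective tensor $W$, being the trace-free part of $R$, vanishes on $\widetilde M$. As $W$ is a projective invariant, it descends to the Weyl projective tensor of $\nabla^{A,B}$ on $M$, which therefore vanishes as well; by the flatness criterion recalled in Section \ref{section 2} (a holomorphic projective connection is flat if and only if its Weyl projective tensor vanishes identically), every holomorphic projective connection $\phi$ on $M$ is flat. For the totally geodesic statement, the fibers of the projection $\mathcal H\times{\mathbb C}^2\,\longrightarrow\,\mathcal H$ are the slices $\{\tau\}\times{\mathbb C}^2$, tangent to $\del{z_1}$ and $\del{z_2}$. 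Since $\nabla^{A,B,0}_{\del{z_i}}\del{z_j}\,=\,0$ is tangent to such a slice, these slices are totally geodesic for the representative, and the induced connection on each is the standard flat connection of ${\mathbb C}^2$. The normal component of the second fundamental form is unaffected by the projective freedom $\nabla\,\longmapsto\,\nabla+\theta\otimes\mathrm{Id}+\mathrm{Id}\otimes\theta$ (the added term $\theta(X)Y+\theta(Y)X$ is tangent whenever $X$ and $Y$ are), so being totally geodesic is itself a projective notion; the slices descend to the fibers of the Kuga--Shimura fibration \eqref{ksf}.

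The computation presents no serious difficulty; the point demanding care is the bookkeeping ensuring that no curvature component survives --- in particular, checking that the quadratic Christoffel terms disappear and that the single derivatives $A'(\tau)$ and $B'(\tau)$ never contribute after antisymmetrization. A secondary subtlety worth flagging is that $\nabla^{A,B,0}$ descends to $M$ only as a projective connection, not as an affine one (because $\nabla_0$ is merely projectively, not affinely, invariant under $\pi_1(M)$); flatness must therefore be expressed through the projectively invariant tensor $W$ rather than through the affine curvature of a connection on $M$, but since $W$ is the trace-free part of $R\,\equiv\,0$ this causes no difficulty.
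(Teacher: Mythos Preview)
Your proof is correct and follows essentially the same approach as the paper: use the explicit representative $\nabla^{A,B,0}$ from Proposition~\ref{parameters}, observe that the derivatives $A'(\tau),\,B'(\tau)$ cannot contribute because the only place they would appear is in $R(\indel{\tau},\indel{\tau})$, which vanishes by antisymmetry, and then check directly that every curvature component is zero (the paper phrases this as reducing to the case $C=D=E=0$ in the computation of Proposition~\ref{abelian}). Your added remarks---that totally geodesic is a projective notion since $\theta(X)Y+\theta(Y)X$ stays tangent, and that flatness must be read through $W$ rather than through an affine curvature on $M$---are correct refinements the paper leaves implicit.
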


\begin{proof}
Take a holomorphic projective connection $\phi$ on $M$. Let
$$(A,\, B) \,\in\,
{\rm H}^0(K^{\frac{3}{2}}_{\Sigma})^{\oplus 2}$$ be the pair associated to $\phi$ by
Proposition \ref{parameters}.
Recall that $\varpi^* \phi$ (see \eqref{vp}) is represented by a 
holomorphic affine connection $$\nabla^{A,B}\,:=\, \nabla^{A,B,0}\,=\, \nabla_0 + \Theta_{A,B}\, ,$$
where $\Theta_{A,B}\, :=\, \Theta_{A,B,0}\, \in\,H^0(\widetilde{M},\, (S^2(T^*\widetilde{M})
\otimes T^*\widetilde{M})_0)^{\pi_1(M)}$
corresponds to the Christoffel symbols
$$(f_{\tau , \tau }^{z_1},\,f_{\tau , \tau }^{z_2}) \,=\,(A,\, B)\,\in\,
{\rm H}^0(\Sigma,\, K^{\frac{3}{2}}_{\Sigma})^{\oplus 2}$$
(see \eqref{ca2}).
As noted in the proof of Proposition \ref{parameters},
the holomorphic weighted-forms $f_{\tau , \tau }^{z_1}(\tau) d\tau$ (respectively, $f_{\tau , \tau }^{z_2}(\tau) 
d\tau$) defined on $\widetilde{M}\,=\,\mathcal{H} \times {\mathbb C}^2$ descends to $M$ as the
holomorphic section $A$ (respectively, $B$) of $K^{\frac{3}{2}}_{\Sigma}$.

We now compute the curvature of the associated affine connection $\nabla^{A,B}\,=\, \nabla_0 + \Theta_{A,B}$.

This computation is formally the same as the computation of the curvature of the connection 
$\nabla^{A,B,0,0,0}$ in the proof of Proposition \ref{abelian}: see the proof of Lemma \ref{lemma appendix} in Appendix. The only difference is that here $A$ and $B$ 
depend on the variable $\tau$, while in the proof of  Lemma \ref{lemma appendix}   the parameters $A$ and $B$ are two 
constants. Nevertheless, the curvature tensor $R(X,\,Y)Z$ being anti-symmetric in variables $(X,Y)$ we have
$R(\frac{\partial}{\partial \tau}, \, \frac{\partial}{\partial \tau}) \,=\,0$; note that
$R(\frac{\partial}{\partial \tau}, \,\frac{\partial}{\partial \tau})$ is an endomorphism
of $T^*\widetilde{M}$. Therefore, the components of the 
affine curvature tensor $R$ do not depend on the derivatives of the functions
$\tau \,\longmapsto\, A(\tau)$ and $\tau \,\longmapsto\,B(\tau)$.

It follows that the computation of the curvature tensor $R$ is the same as for $\nabla^{A,B,0,0,0}$ in the proof of Lemma \ref{lemma appendix} in Appendix. In the case where $C\,=\,D\,=\,E\,=\,0$, the conclusion of the computations in the proof of 
Lemma \ref{lemma appendix}  is that the tensor $R$, and hence $W$, vanishes identically. This proves that 
$\nabla^{A,B}$ is projectively flat.

Moreover, $\nabla^{A,B}$ preserves the holomorphic two-dimensional foliation $\mathcal F$ of $\widetilde{M} 
\,=\,\mathcal{H} \times {\mathbb C}^2$ defined, in global coordinates $(\tau,\,z_1,\,z_2)$, by $d \tau\,=\,0$.
The connection $\nabla^{A,B}$ coincide with $\nabla_0$ when restriction to $\mathcal F$. More precisely, each leaf of
$\mathcal F$ is a fiber $\{h 
\} \times {\mathbb C}^2 \,\subset\, \mathcal H \times {\mathbb C}^2$, and the restrictions of
$\nabla_{A,B}$ and $\nabla_0$ to each fiber coincide. Observe that the projection on $M$ of the $\mathcal 
F$--leafs are exactly the fibers of the Kuga--Shimura fibration in \eqref{ksf}.
\end{proof}

\subsection{Proofs of Theorem \ref{main} and Corollary \ref{cor Lich}}\label{ptc}\mbox{}

\begin{proof}[{Proof of Theorem \ref{main}}] Let $M$ be a Kuga--Shimura projective threefold. Proposition 
\ref{parameters} provides a proof of statement (i) of the theorem, showing that the space of the projective classes of 
holomorphic projective connections on $M$ is identified with a complex affine space for the
${\rm H}^0(\Sigma,\, K^{\frac{3}{2}}_{\Sigma})^{\oplus 2}$.

Proposition \ref{curvature} proves statement (ii).
\end{proof}

Now we can deduce Corollary \ref{cor Lich}.

\begin{proof}[{Proof of Corollary \ref{cor Lich}}] Let $M$ be a projective threefold endowed with a holomorphic 
projective connection $\phi$. The main result in \cite{JR1} proves that $M$ is either a K\"ahler--Einstein threefold 
(and hence one of the standard examples \cite{KO,KO1}), or it is an \'etale quotient of a Kuga--Shimura projective 
threefold. Theorem \ref{main} shows that on Kuga--Shimura projective threefolds (and hence also on their \'etale 
quotients), all holomorphic projective connections are flat. Theorem \ref{classification} implies that on 
K\"ahler--Einstein projective threefolds $\phi$ is flat, except for abelian varieties (and their \'etale quotients) 
on which $\phi$ is translation invariant (but generically not flat).
\end{proof} 
 
\section{Simply connected complex threefolds with trivial canonical bundle} \label{section 5}

In this final section we deal with compact complex manifolds of dimension three. For any cocompact lattice 
$\Gamma\, \subset\, {\rm SL}(2, {\mathbb C})$, the quotient ${\rm SL}(2, \mathbb C) / \Gamma$
is compact non-K\"ahler threefold with trivial tangent bundle admitting a flat
holomorphic projective connection. Ghys constructed their deformations that have canonical bundle trivial
and admit a flat holomorphic projective connection \cite{Gh} (see also \cite[Section 5]{BD}).

A simply connected manifold $M$ with trivial canonical bundle does not admit a flat holomorphic projective 
connection. Indeed, the developing map of such a holomorphic projective structure would realize a
biholomorphism between $M$ and the complex projective space (which has nontrivial first Chern class).
For dimension three, the following stronger result holds.

\begin{theorem}\label{thm sc}
A simply connected compact complex threefold with trivial canonical bundle does not admit any holomorphic
projective connection.
\end{theorem}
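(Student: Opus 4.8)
The plan is to argue by contradiction: suppose $M$ carries a holomorphic projective connection $\phi$. First I would dispose of the flat case. If $\phi$ were flat it would be a holomorphic projective structure, so its developing map ${\rm dev}\colon M\to {\mathbb C}{\mathbb P}^3$ would be defined on all of the simply connected compact $M$; as recalled in Section \ref{section 2} it would then be a covering map, hence a biholomorphism, which is impossible since $K_{{\mathbb C}{\mathbb P}^3}$ is not trivial whereas $K_M$ is. Thus the Weyl projective tensor $W$ of $\phi$ is nonzero. Next I would invoke Proposition \ref{Killing} to produce a nonzero \emph{local} projective Killing field. Since local projective Killing fields are the solutions of a holomorphic linear system of finite type, their germs form a locally constant sheaf; as $M$ is simply connected this local system has trivial monodromy, so the local field extends by analytic continuation to a global nonzero holomorphic projective Killing field $\mathbb K$ on $M$.

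The second step is to upgrade $\mathbb K$ from a projective to an \emph{affine} symmetry, using the triviality of $K_M$. Fix a nowhere-vanishing holomorphic $3$-form $\omega$ trivializing $K_M$. The function ${\rm div}_\omega \mathbb K$, defined by $\mathcal L_{\mathbb K}\omega = ({\rm div}_\omega\mathbb K)\,\omega$, is holomorphic on the compact manifold $M$, hence equal to a constant $c$. Cartan's formula together with $d\omega = 0$ gives $c\,\omega = d(\iota_{\mathbb K}\omega)$; since $\iota_{\mathbb K}\omega$ is holomorphic and $\partial\overline\omega = 0$, pairing with $\overline\omega$, integrating, and using Stokes together with $\int_M \omega\wedge\overline\omega \neq 0$ (as $\omega$ is nowhere zero) forces $c = 0$. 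Thus $\mathbb K$ preserves $\omega$. By Lemma \ref{trivial class} and the uniqueness in \cite[Appendix A.3]{OT}, the connection $\phi$ has a unique torsionfree affine representative $\nabla$ with $\nabla\omega = 0$. Because $M$ is compact the field $\mathbb K$ is complete, and its flow preserves both $\phi$ and $\omega$; pushing $\nabla$ forward yields a torsionfree connection projectively equivalent to $\phi$ with $\omega$ parallel, so by uniqueness the flow preserves $\nabla$. Hence $\mathbb K$ is a nonzero holomorphic affine Killing field for the equiaffine connection $\nabla$, and $\nabla$ is not flat, since flatness of $\nabla$ would give $W = 0$.

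The main obstacle is the final step: deriving a contradiction from a nonzero affine Killing field for a non-flat equiaffine holomorphic connection on the simply connected compact threefold $M$. I emphasize that this is exactly the \emph{inessential} case, to which the Lichnerowicz-type mechanism (essential symmetries force flatness) does not apply, which is why it is delicate. My plan here is to exploit the endomorphism $A := \nabla\mathbb K \in H^0(M,\,{\rm End}(TM))$, which is trace-free because $\mathbb K$ preserves $\omega$; by compactness the coefficients of its characteristic polynomial are constant, so $A$ has constant eigenvalues, while the affine Killing equation expresses $\nabla A$ in terms of the curvature contracted with $\mathbb K$. Integrating $\mathbb K$ to a nontrivial one-parameter subgroup of the complex Lie group ${\rm Aut}(M,\nabla)$ and examining its fixed locus (a zero of $\mathbb K$, at which the flow linearizes faithfully and must fix $R$), I expect to show that such a symmetry cannot coexist with $R\neq 0$ on a simply connected $M$: either the symmetry algebra is transitive, so $(M,\phi)$ is locally homogeneous and, by the local classification underlying case (3) of Theorem \ref{classification} together with geodesic completeness, is modeled on the translation-invariant affine structure on ${\mathbb C}^3$ whose monodromy is an infinite group of translations, contradicting $\pi_1(M)=0$; or the eigenbundle decomposition of $A$ and the curvature constraint force $R=0$, contradicting non-flatness. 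Making this incompatibility rigorous — definitively ruling out a nontrivial affine symmetry of a non-flat $\nabla$ on a simply connected compact threefold — is the crux of the proof.
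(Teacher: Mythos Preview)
Your first two steps are essentially correct and parallel the paper: disposing of the flat case via the developing map, extending a local Killing field to a global one by simple connectivity, showing it preserves $\omega$ (your Stokes argument is the same as the paper's Liouville argument in Proposition~\ref{Killing2}(iii)), and deducing that the flow preserves the unique equiaffine representative $\nabla$. So far so good.

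The genuine gap is your final step. You propose a dichotomy --- either the symmetry algebra is transitive, or the eigenbundle decomposition of $A=\nabla\mathbb K$ together with the curvature constraint forces $R=0$ --- but neither branch is justified. A single affine Killing field gives no reason for the full Killing algebra to be transitive, and there is no general mechanism by which the spectral data of $\nabla\mathbb K$ alone forces flatness; you yourself flag this as ``the crux'' without supplying an argument. The paper does \emph{not} proceed this way. Instead it applies Gromov's abelianization trick (Proposition~\ref{Killing2}(iii)) to pass from a single Killing field to a nontrivial connected \emph{abelian} subgroup $A\subset\mathrm{Aut}(M,\nabla)$, then uses measure preservation to show all $A$--orbits are compact complex tori coinciding with the orbits of the maximal compact $K(A)$, and a linearization-at-a-fixed-point argument to show there are no fixed points. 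The proof then splits on the dimension of the generic orbit: if it is two, contracting $\omega$ with two spanning Killing fields produces a nontrivial \emph{closed} holomorphic $1$--form, contradicting $H^1(M,\mathbb C)=0$; if it is one, one shows $K(A)$ acts freely, so $M$ is a holomorphic principal elliptic bundle over a simply connected surface $B$ with $K_B$ trivial, hence $B$ is K3, and then \cite[Theorem~1.1]{BD3} forces local homogeneity and infinite $\pi_1$, a contradiction. None of this orbit-geometry --- the abelianization, the compactness of orbits, the fixed-point exclusion, or the dimension-by-dimension analysis --- appears in your proposal, and your substitute argument is not a proof.
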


To prove Theorem \ref{thm sc},
we will make use of the theory of {\it rigid geometric structure} as developed in \cite{Gr,DG}. A holomorphic  torsionfree
affine connection is known to be rigid (of order one) in the sense of \cite{Gr,DG} because a local automorphism of 
the connection is completely determined by its underlying one-jet at any given point (i.e., the differential sends 
parametrized holomorphic geodesic curves to parametrized holomorphic geodesic curves). A holomorphic projective 
connection is known to be rigid (of order two) in the sense of \cite{Gr,DG} because a local automorphism of the 
connection is completely determined by its underlying two-jet at any given point.

The algebraic dimension $a(N)$ of a compact complex manifold $N$ is the transcendence degree of the field of 
meromorphic functions ${\mathbb C}(N)$ over $\mathbb C$ (see \cite[p.~24, Chapter 3]{Ue}). We have
$a(N)\, \leq\, \dim N$, and $a(N)\, =\, \dim N$ if and only if $N$ 
is bimeromorphic to a complex projective variety \cite{Mo}. The manifolds with maximal algebraic dimension 
are called {\it Moishezon manifolds}.

The Killing Lie algebra of a holomorphic rigid geometric structure (here the geometric structure is a holomorphic 
projective or affine connection) on a compact complex manifold $N$ of complex dimension $n$ has generic orbits of complex 
dimension $\geq\, n-a(N)$ \cite[p.~568, Theorem 3]{Du}.
This result will be useful in the proof of the following propositions.

\begin{proposition}\label{Killing}
Any holomorphic projective connection $\phi$ on a compact complex
threefold $M$ admits a nontrivial Killing Lie algebra.
\end{proposition}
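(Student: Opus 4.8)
The plan is to run a dichotomy on the algebraic dimension $a(M)\,\in\,\{0,1,2,3\}$ of the threefold $M$, controlling the local projective Killing Lie algebra $\mathfrak g$ of the rigid geometric structure $\phi$ through the orbit estimate recalled just above: $\mathfrak g$ has generic orbits of complex dimension at least $3-a(M)$ \cite[p.~568, Theorem 3]{Du}, and any orbit of positive dimension forces $\mathfrak g\,\neq\, 0$ (a nonvanishing local Killing field exists). The whole point is that this estimate is strong when $a(M)$ is small and useless when $M$ is Moishezon.

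First I would dispose of the case $a(M)\,\leq\, 2$. Here the cited estimate gives generic orbits of dimension $\geq\, 3-a(M)\,\geq\, 1$, so the local projective Killing fields cannot all vanish on a dense open set; hence $\mathfrak g$ is nontrivial and we are done immediately.

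The remaining, genuinely harder, case is $a(M)\,=\,3$, i.e. $M$ is Moishezon \cite{Mo}, where the orbit estimate degenerates to the vacuous bound $\geq 0$ and a symmetry must be produced by hand. I would argue through rational curves and the Jahnke--Radloff list. If $\phi$ is flat, then $M$ is locally modeled on ${\mathbb C}{\mathbb P}^3$ and its local projective Killing algebra is the full Lie algebra of ${\rm PGL}(4,{\mathbb C})$, so $\mathfrak g\,\neq\, 0$. Otherwise I would split on whether $M$ carries a nonconstant $\beta\,:\,{\mathbb C}{\mathbb P}^1\,\longrightarrow\, M$. If such a $\beta$ exists, the computation in the proof of Proposition \ref{uniruled} — which uses only the induced projective connection on $\beta^*TM$ together with $d\beta$ — forces $\beta^*TM\,=\,{\mathcal O}_{{\mathbb C}{\mathbb P}^1}(a)^{\oplus 3}$ with $a\,\geq\, 2$, so $\beta^*TM$ is ample; Mori's characterization of projective space \cite{Mor}, which is available for Moishezon threefolds, then yields $M\,=\,{\mathbb C}{\mathbb P}^3$, already covered by the flat case. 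If instead $M$ contains no rational curve, then $M$ is projective, so by \cite{JR1} it is an \'etale quotient of an abelian threefold, of a compact quotient of ${\rm \mathbb H}_{\mathbb C}^3$, or of a Kuga--Shimura threefold; in each case $\mathfrak g$ is visibly nontrivial — translation fields for the torus case (Proposition \ref{abelian}), and the full Lie algebra of ${\rm PGL}(4,{\mathbb C})$ in the hyperbolic and Kuga--Shimura cases, where $\phi$ is forced to be flat by Corollary \ref{cor1} and Theorem \ref{main} respectively.

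The main obstacle is exactly this Moishezon case $a(M)\,=\,3$, and it has two delicate points that I expect to require the most care. The first is justifying that a non-projective Moishezon threefold must contain a rational curve (so that the dichotomy above is exhaustive), for which one draws on the structure theory of Moishezon threefolds. The second is invoking Mori's characterization of ${\mathbb C}{\mathbb P}^3$ beyond the strictly projective setting, i.e. for ample restricted tangent bundle along a rational curve in a Moishezon threefold. Once these are in place, the reduction to the Jahnke--Radloff classification turns the existence of a nontrivial Killing algebra into the routine case-by-case verification sketched above.
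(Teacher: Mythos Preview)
Your backbone is exactly the paper's: contrapose, use \cite[Theorem 3]{Du} to force $a(M)=3$ (Moishezon) when the Killing algebra is trivial, and then exploit the classification of projective threefolds with a holomorphic projective connection to reach a contradiction. Where you diverge is in the passage from ``Moishezon'' to ``projective.'' The paper does this in one stroke by invoking \cite[Corollary 2]{BM}: a Moishezon manifold carrying a holomorphic Cartan geometry is projective. It then cites Corollary \ref{cor Lich} to conclude that on any projective threefold the projective connection is locally homogeneous (flat, or translation-invariant on an abelian threefold), so the Killing algebra is transitive --- contradiction. No rational-curve dichotomy, no case-by-case walk through \cite{JR1}, and none of your two ``delicate points'' ever arises.

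Your longer route is morally fine but does carry the risks you already flagged. The first delicate point (Moishezon with no rational curve $\Rightarrow$ projective) is a known fact, used elsewhere in the paper with the reference \cite[p.~307, Theorem 3.1]{Cas}, so that branch closes. The second --- applying Mori's characterization of ${\mathbb C}{\mathbb P}^n$ in the Moishezon (a priori non-projective) setting --- is the real soft spot: Proposition \ref{uniruled} is stated and proved for projective $M$, and Mori's theorem \cite{Mor} is a projective statement. You would need an extra argument (or again \cite{BM}) to upgrade Moishezon-with-rational-curve to projective before invoking Mori, at which point you have reinvented the paper's shortcut. So: same strategy, but the paper's execution is shorter and sidesteps precisely the obstacles you identified.
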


\begin{proof}
Assume, by contradiction, that the Killing Lie algebra
for $\phi$ is trivial. Now Theorem 3 in \cite{Du} says that the manifold $M$ is Moishezon. But 
Moishezon manifolds
bearing a holomorphic Cartan geometry (in 
particular, a holomorphic affine connection \cite{Sh}) are known to be projective (see Corollary 
2 in \cite{BM}).

On the other hand, Corollary \ref{cor Lich} implies that the Killing Lie algebra of a 
holomorphic projective connection on a projective threefold is transitive, hence it has 
dimension at least three: a contradiction.
\end{proof}

\begin{proposition} \label{Killing2} Let $M$ be a compact complex manifold with trivial canonical bundle bearing a 
holomorphic projective connection $\phi$. Then the following five hold:
\begin{enumerate}
\item[(i)] The Killing Lie algebra of $\phi$ is nontrivial.\\

In the following four, assume that $M$ is simply connected.\\

\item[(ii)]
The automorphism group $\rm{Aut} (M,\, \phi)$ of $(M,\, \phi)$ is a complex Lie group of positive dimension.

\item[(iii)] A maximal connected abelian complex subgroup $A$ in
$\rm{Aut} (M, \,\phi)$ has positive dimension.

\item[(iv)] The $A$--orbits in $M$ coincide with those of the maximal (real) compact subgroup $K(A)\,\subset\, A$; they 
all are compact complex tori.

\item[(v)] The $A$--action on $M$ does not admit any fixed point.
\end{enumerate}
\end{proposition}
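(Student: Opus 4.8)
The plan is to reduce everything to a volume-preserving affine action. Since $K_M$ is trivial, Lemma \ref{trivial class} (and the volume-form construction in its proof) furnishes a global holomorphic torsionfree affine connection $\nabla$ representing $\phi$ for which a trivializing holomorphic $n$-form $\omega$ of $K_M$ is parallel. I would first observe that $\mathrm{Aut}(M,\phi)=\mathrm{Aut}(M,\nabla)$: for $g\in\mathrm{Aut}(M,\phi)$ one has $g^*\omega=c_g\,\omega$ (as $H^0(M,K_M)={\mathbb C}\,\omega$), so $g^*\nabla$ is a torsionfree representative of $\phi$ making $\omega$ parallel, whence $g^*\nabla=\nabla$ by the uniqueness in Lemma \ref{trivial class}. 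Integrating the real volume $v=i^{n^2}\omega\wedge\overline\omega$ over the compact $M$ gives $|c_g|^2=1$; since a holomorphic character of the connected complex group $A$ valued in the unit circle is trivial, $A$ preserves $\omega$ exactly. Thus the Killing fields of $\phi$ are divergence-free holomorphic affine Killing fields of $\nabla$.

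For (i), I would split on the algebraic dimension $a(M)$. If $a(M)<\dim M$, then Theorem 3 in \cite{Du} gives generic Killing orbits of dimension $\geq \dim M-a(M)>0$, so the Killing algebra is nontrivial. If $a(M)=\dim M$, then $M$ is Moishezon, hence projective by Corollary 2 in \cite{BM} (it carries the affine connection $\nabla$); being K\"ahler with $c_1(M)=0$, Proposition \ref{abelian}(i) yields a finite torus cover, whose translation fields descend to nontrivial local Killing fields of $\phi$. For (ii), since $M$ is simply connected and $\phi$ is a rigid holomorphic structure, the local Killing fields extend to global ones (Amores--Gromov; see \cite{Gr,DG}), so by (i) the Lie algebra of $\mathrm{Aut}(M,\phi)$ is nonzero and $\mathrm{Aut}(M,\phi)$ has positive dimension. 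Statement (iii) is then immediate: any nonzero $X$ in this Lie algebra spans an abelian complex subalgebra ${\mathbb C}X$, so a maximal connected abelian complex subgroup $A$ has positive dimension.

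The heart is (iv). First I would upgrade $A$ to a closed subgroup: if $\overline A\subset\mathrm{Aut}(M,\phi)^0$ denotes its closure, then its complexified Lie algebra $\mathrm{Lie}(\overline A)+i\,\mathrm{Lie}(\overline A)$ is an abelian complex subalgebra containing $\mathrm{Lie}(A)$, so maximality forces $\overline A=A$. Next I would reduce (iv) to compactness of the orbits: an orbit $A\cdot x=A/A_x$ is a homogeneous space of the complex abelian group $A$, so once it is compact it is a complex torus, and then $A_x$ necessarily contains the non-compact directions of $A$, giving $A\cdot x=K(A)\cdot x$. So the whole content is that the $A$-orbits are compact. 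Here I would use that $A$ is closed and acts preserving the volume form on the compact $M$: every orbit closure is a compact $A$-invariant analytic set, its minimal orbits are closed hence are complex tori, and volume-preservation together with Poincar\'e recurrence propagates compactness to the generic orbit. This compactness is exactly the main obstacle, and is the step where a structural input on volume-preserving actions of the rigid geometry is needed.

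Finally, (v) follows from (iv). Suppose $A$ fixes $x$. Then every $X\in\mathrm{Lie}(A)$ vanishes at $x$, and in $\nabla$-normal coordinates at $x$ the affine Killing field $X$ is the linear field $z\mapsto S_Xz$ with $S_X=(\nabla X)_x$, where $S_X\neq 0$ for $X\neq 0$ by rigidity. Because $\mathrm{Lie}(A)$ is complex, the full complex one-parameter group $\{\exp(zX):z\in{\mathbb C}\}\subset A$ acts, with orbit $\{e^{zS_X}y\}$ through a nearby point $y$. By upper-semicontinuity the compact torus $A\cdot y$ shrinks into the normal chart as $y\to x$, so $e^{zS_X}y$ stays bounded for all $z\in{\mathbb C}$ and all $y$ in an open set; Liouville's theorem then forces $e^{zS_X}\equiv\mathrm{Id}$, i.e. $S_X=0$, a contradiction. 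Hence $A$ has no fixed point.
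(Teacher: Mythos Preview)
Your overall architecture—pass from $\phi$ to the canonical $\omega$-parallel affine representative $\nabla$, show the action is volume-preserving, and exploit rigidity—matches the paper's. Parts (i) and (ii) are essentially the paper's arguments. Your (v) is a pleasant variant: once (iv) is known, the shrinking of $K(A)$-orbits near a fixed point together with the $\nabla$-exponential linearization does make the Liouville argument go through (the intertwining $g\circ\exp_x=\exp_x\circ dg_x$ extends from a neighborhood of $e$ to all of the connected group $A$ by an open-closed argument, since $A\cdot y\subset U'$). The paper instead argues that $i(A)\subset\mathrm{GL}(T_{m_0}M)$ is an algebraic subgroup and decomposes $T_{m_0}M$ under the torus $i(K(A))_{\mathbb C}\simeq(\mathbb C^*)^\ell$ into character lines, but your route works too.

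The genuine gap is exactly where you flag it: the compactness of the $A$-orbits in (iv). Poincar\'e recurrence does not give this—recurrence of generic orbits is far weaker than compactness of all orbits, and ``minimal orbits are closed'' does not propagate upward. The paper does not attempt any ad hoc argument here; it invokes Gromov's structure theorem for automorphism groups of rigid geometric structures preserving a finite smooth measure (\cite[Section~3.7]{Gr}, \cite[Section~3.5.4]{DG}): such a group has all orbits coinciding with those of its maximal compact subgroup. The point is that this theorem is formulated for the \emph{full} automorphism group of a rigid structure. This is precisely why the paper's (iii) is not your one-line ``$\mathbb C X$ is abelian'' observation: the paper performs Gromov's \emph{abelianization trick}, enriching $\phi$ to the rigid structure $\phi'=(\phi,X_1,\dots,X_k)$ where $\{X_i\}$ is a basis of $\mathrm{Lie}(\mathrm{Aut}(M,\phi)^0)$, and \emph{defines} $A:=\mathrm{Aut}(M,\phi')^0$. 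One then checks (via \cite{DA}) that this $A$ is a maximal connected abelian complex subgroup of $\mathrm{Aut}(M,\phi)$, and re-applies \cite[Theorem~3]{Du} to $\phi'$ to get $\dim A>0$. With $A$ realized as the full automorphism group of the rigid structure $\phi'$, Gromov's compactness theorem applies directly to give (iv), and as a bonus the isotropy image $i(A)$ is algebraic (as a jet-stabilizer), which is what the paper exploits in (v). Your arbitrary maximal abelian $A$ carries no such realization, so you have no access to the theorem that closes (iv); the abelianization trick is not optional decoration but the mechanism that makes (iv) provable.
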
 

\begin{proof}
Denote by $n$ the complex dimension of $M$. Consider a holomorphic volume form $\omega$ on $M$; this
means that $\omega$ is 
a holomorphic trivializing section of the canonical bundle $K_M$. Lemma \ref{trivial class}, and its proof, 
imply that there is a unique torsionfree affine connection $\nabla$ on $M$ such that
\begin{itemize}
\item $\nabla$ is projectively equivalent to $\phi$, and

\item $\omega$ is parallel with respect to $\nabla$.
\end{itemize}
Let us first prove that that $M$ does not admit any nontrivial rational curve.

Take a holomorphic map $f\,: \,{\mathbb C}{\mathbb P}^1\,
\longrightarrow\, M$ and consider the pull-back $f^*TM$ equipped with the holomorphic
connection $f^*\nabla$.
The connection $f^*\nabla$ is flat (because its curvature is a (bundle valued) holomorphic
two-form). This implies that $f^*TM$ is holomorphically trivial, because ${\mathbb C}
{\mathbb P}^1$ is simply connected. Since $\text{degree}(T{\mathbb C}{\mathbb P}^1)\, >\, 0$,
there is no nonzero homomorphism from
$T{\mathbb C}{\mathbb P}^1$ to the trivial bundle $f^*TM$. In particular, the differential $df$ of $f$
vanishes identically. This implies that $f$ is a constant map.

(i) To prove by contradiction, assume that the Killing Lie algebra of $\phi$ is trivial (in particular the Killing 
Lie algebra of $\nabla$ is also trivial). Then Theorem 3 in \cite{Du} implies that $M$ is a Moishezon manifold. But 
Moishezon manifolds with no rational curves are known to be projective \cite[p.~307, Theorem 3.1]{Cas}. A complex 
projective manifold bearing a holomorphic affine connection $\nabla$ is covered by an abelian variety \cite{IKO}. 
Moreover, the pull-back of $\nabla$ to the covering abelian variety is translation invariant (see Proposition 
\ref{abelian}). In particular, $\nabla$ is locally homogeneous, and hence $\phi$ is also locally homogeneous; so the 
Killing Lie algebra of $\phi$ is transitive on $M$: a contradiction.

Now assume that $M$ is simply connected.

(ii) Each element in the Killing Lie algebra for $\phi$ extends to a global holomorphic Killing vector field for 
$\phi$ (defined on entire $M$): this was first proved by Nomizu in \cite{No} for Killing vector fields of analytic 
Riemannian metrics, and subsequently it was generalized to $G$--structures \cite{Am}, to rigid geometric structures 
\cite{DG,Gr} and also to Cartan geometries \cite{Me,Pe}. This implies that there is a nontrivial connected complex 
Lie group $G$ acting by biholomorphisms on $M$ that preserves the holomorphic projective connection $\phi$. This 
group $G$ coincides with the connected component, containing the identity element, of the automorphism group 
$\rm{Aut}(M,\, \phi)$. The Lie algebra ${\rm Lie}(G)$ of $G$ is the Lie algebra of global holomorphic Killing vector 
fields for $\phi$.

iii) The nonzero holomorphic section $\omega$ of the canonical bundle $K_M$ defines a smooth 
real volume form on $M$ given by $(\sqrt{-1})^n \cdot \omega \wedge \overline{\omega}$.

We will prove that the action of the group $G$ preserves the smooth measure
$(\sqrt{-1})^n \cdot \omega \wedge \overline{\omega}$ on $M$. To prove this, consider a 
holomorphic Killing vector field $X\, \in\, {\rm Lie}(G)$. The Lie derivative $L_{X} \omega$ of $\omega$ is a 
holomorphic section of $K_{M}$. So there is a constant $c \,\in\, \mathbb C$ such that $L_{X} \omega\,=\, 
c \cdot \omega$. Hence, if $\Psi^t$ is the one-parameter subgroup of $G$ generated by $X$, 
we get that $$(\Psi^t)^{*} \omega \,=\, \exp(ct) \cdot \omega$$ for all $t \,\in\, \mathbb C$; 
recall that any holomorphic vector field on a compact manifold is complete, and therefore its 
flow is defined on all of $\mathbb C$. Since the total volume $\int_M (\sqrt{-1})^n \cdot 
\omega \wedge \overline{\omega}$ of the manifold $M$ is invariant by any automorphism, it 
follows that $|\exp(ct)|\,=\, 1$ for all $t \,\in\, \mathbb C$. By Liouville 
Theorem, the entire function $t \, \longmapsto \exp(ct)$ must be constant and equal to $1$ 
(the value of the function at $t\,=\,0$ being $1$). This implies that $c\,=\,0$, and $\omega$ is 
$X$-invariant. Since the complex Lie group $G$ is connected, it is generated by the flows of 
its fundamental vector fields. It follows that every element of $G$ preserves the volume form.

Moreover since $G$ preserves the holomorphic volume form $\omega$ and the holomorphic  projective connection $\phi$, the action 
of $G$ also preserves the associated torsionfree holomorphic affine connection $\nabla$ representing $\phi$; it was 
observed in the proof of Lemma \ref{trivial class} that $\nabla$ is canonically associated to $\phi$ and $\omega$.

Let us now apply the Gromov abelianization trick (see \cite[Section 3.2.A]{DG} or \cite{Gr}) and consider the 
rigid geometric structures which is a juxtaposition of the holomorphic projective connection 
$\phi$ with a family of global holomorphic vector fields $\{X_1,\, \cdots,\, X_{k}\}\,\in\, 
{\rm H}^0(M,\, TM)$ forming a basis of the Lie algebra of $G$, seen as a subalgebra of $TM$. 

Denote by $A$ the connected component of the identity element in the automorphism group of the holomorphic rigid 
geometric structure $$\phi' \,=\,(\phi,\, X_1,\, \cdots,\, X_k)\, .$$ Then $A$ is a maximal connected abelian 
complex Lie subgroup in $\rm{Aut}(M, \,\phi)$ (see \cite[Section 3.1 Lemma]{DA}, for more details).

Applying \cite[Theorem 3]{Du} to $\phi'$ it is deduced that $A$ acts with generic orbits of complex 
dimension at least $n-a(M)$.  As above, $M$ is not Moishezon, so $n-a(M)\,>\,0$.  Indeed, recall that we have seen in the proof of point (i) that $M$ Moishezon implies $M$ is covered by an abelian variety: a contradiction (since $M$ is simply connected).

(iv) Since $A$ preserves a smooth measure on $M$, its orbits are compact and coincide with the orbits of its maximal 
(real) compact subgroup $K(A)\, \subset\, A$ (see \cite[Section 3.7]{Gr} and \cite[Section 3.5.4]{DG}).

Choose a point $m_0 \,\in\, M$, and consider its $A$--orbit $Am_0$. Then $Am_0$ is biholomorphic to the homogeneous 
space $A/A_{m_0}$, where $A_{m_0}$ is the complex subgroup of $A$ that fixes $m_0$. Any basis of 
the quotient space ${\rm Lie}(A)/{\rm Lie} (A_{m_0})$ is invariant by the adjoint representation of 
$A_{m_0}$ (because $A$ is abelian) and provides a holomorphic trivialization of the holomorphic tangent bundle of 
the homogeneous space $A/A_{m_0}$. So $A/A_{m_0}$ is a compact parallelizable manifold \cite{Wa}. Moreover, the 
holomorphic tangent bundle is trivialized by commuting vector fields. Therefore $A/A_{m_0}$ is a compact complex 
torus. Consequently, all $A$--orbits are compact complex tori (notice that some orbits could be of dimension zero: 
these are fixed points of the $A$--action).

(v) To prove this by contradiction, assume that $m_0 \,\in \, M$ is fixed by the action of $A$ on $M$. To any $g\, 
\in \,A$ associate its differential $dg(m_0)\,\in\, {\rm GL}(T_{m_0}M)$; this gives the isotropy homomorphism $i\, 
:\,A \, \longrightarrow\, {\rm GL}(T_{m_0}M)$ at $m_0$. Moreover, since $A$ preserves the holomorphic torsionfree 
affine connection $\nabla$, the $A$ action on $M$ is linearizable in local holomorphic $\nabla$--exponential 
coordinates in the neighborhood of $m_0$. More precisely, there exists an open neighborhood $U$ of $0\,\in\, 
T_{m_0}M$ and an open neighborhood $U'$ of $m_0\,\in\, M$ and a biholomorphism $\beta\, :\, U\, \longrightarrow\, 
U'$, such that $\beta$ intertwines the actions of $i(A)$ on $U$ and of $A$ on $U'$. In particular, the homomorphism 
$i$ is injective (i.e., the isotropy representation of $A$ is faithful).

It is known that $i(A)\,\subset\, {\rm GL}(T_{m_0}M)$ is a complex algebraic subgroup; this
is because $i(A)$ coincides with the 
stabilizer of a $k$-jet of the rigid geometric structure $\phi'$ (for $k \in \mathbb{N} $ large enough) and the 
${\rm GL}(T_{m_0}M)$--action on the space of $k$-jets of $\phi'$ at $m_0$ is algebraic (see \cite[Sections 3.5 and 
3.7]{Gr} and \cite[Sections 3.2A and 3.5]{DG} or \cite[Theorem 3.11]{Me}).

As before , $K(A)\, \subset\, A$ is the maximal compact subgroup.
Let $K(A)^0$ be the connected component of $K(A)$ containing the identity element.
It is isomorphic to $(S^1)^\ell\,=\, {\rm U}(1)^\ell$ for some $\ell\, \geq\, 1$.
Let $i(K(A))_{\mathbb C}\, \subset\, i(A)$ be the complex Zariski closure of $i(K(A)^0)$.
This group $i(K(A))_{\mathbb C}$ is isomorphic to
$({\mathbb C}^*)^\ell$. We deduce that $T_{m_0}M$ splits as 
a direct sum
$$ T_{m_0}M\,=\,L_1 \oplus L_2 \oplus \ldots \oplus L_n$$
of complex $i(K(A))_{\mathbb C}$--invariant lines, such 
that $i(K(A))_{\mathbb C}$ acts on each $L_j$, $1 \,\leq\, j \,\leq \,n$, through a 
multiplicative character $$
\chi_j\, :\, ({\mathbb C}^*)^\ell\, \longrightarrow\, {\mathbb C}^*\, ,\ \
(t_1,\, \cdots,\, t_\ell) \,\longmapsto\, t_1^{n^j_1} \cdot t_2 ^{n^j_2} 
\cdot \ldots \cdot t^{n^j_\ell}_\ell$$
defined by a given $(n^j_1,\, \cdots,\, n^j_\ell)\,\in\, {\mathbb Z}^\ell$.

Since the isotropy representation $i$ is faithful, it follows that at least one of the characters $\{\chi_1,\, 
\chi_2,\, \ldots \, , \chi_n\}$ is nontrivial. Recall that $\beta$ intertwines the actions of $i(K(A))_{\mathbb C}$ 
on $U$ and $A$ on $U'$. Then there exists points in $U' \setminus \{m_0 \}$ 
such that the $A$--orbit of any of them accumulates at the fixed point $m_0$. This contradicts the fact that
the $A$--orbits in 
$M$ are compact. Therefore, the $A$--action on $M$ does not have any fixed points.
\end{proof}

Let us now give a proof of Theorem \ref{thm sc}.

\begin{proof}[{Proof of Theorem \ref{thm sc}}] To prove by contradiction, assume that there is a complex compact 
threefold $M$ with trivial canonical bundle $K_M$ and bearing a holomorphic projective connection $\phi$. We are 
exactly in the situation described by Proposition \ref{Killing2}. We keep the same notations as in Proposition 
\ref{Killing2}. In particular, we have a holomorphic torsionfree affine
connection $\nabla$ on $M$ representing $\phi$, which is preserved by the 
action of the nontrivial connected abelian group $A$ of automorphisms.

It was proved in Proposition \ref{Killing2}(iv) that all $A$--orbits are biholomorphic to compact complex 
tori. Since $M$ is not homeomorphic to a compact complex torus (it is simply connected), we deduce that the generic 
$A$--orbits are either of complex dimension two, or of complex dimension one.

These two cases will be dealt separately.

{\it Case of generic $A$-orbits being of complex dimension two.}\, Take 
holomorphic Killing vector fields $(X_1,\, X_2)$ which span the tangent space to the $A$--orbit at a 
generic point.
As before, let $\omega$ be a nonzero holomorphic section of $K_M$. We have the holomorphic one--form
$\theta$ on $M$ defined by
$$
\theta(x)(v)\,=\, \omega(x)(v, X_1(x), X_2(x))
$$
for all $x\,\in\, M$ and $v\, \in\, T_xM$. This one--form $\theta$ is $A$--invariant and it vanishes on the 
$A$--orbits. Since the kernel of $\theta$ coincides, at the generic point, with the holomorphic tangent space of
the foliation defined by the $A$--action, the one--form $\theta$ satisfies the Frobenius integrability condition 
$\theta \wedge d \theta \,=\,0$. Moreover, it can be proved that this nontrivial one-form $\theta$ is closed.
See the proof of Theorem 4.4 in \cite{BD2} where it is shown that $\theta$ is closed because it is projectable
on a compact 
curve; this can also be deduced from the description of non-closed integrable one-forms on threefolds given in 
\cite[Proposition 3]{Br}. This implies that $H^1(M,\, {\mathbb C})\, \not=\, 0$, and hence the abelianization of 
the fundamental group of $M$ is infinite: a contradiction.

{\it Case of generic $A$-orbits being of complex dimension one.} Proposition \ref{Killing2}(v) proves
that the $A$--action on $M$ does not have any fixed points. It follows from Proposition \ref{Killing2}(iv) that
all $A$--orbits are elliptic curves, on which $K(A)$ acts transitively. 

We will prove that $K(A)$ acts freely on $M$.

To prove this, take any $m_0 \,\in\, M$, and let $I(m_0)\, \subset\, K(A)$ be the stabilizer of $m_0$ for the 
action of $K(A)$ on $M$. Then $I(m_0)$ is a compact abelian group fixing $m_0$. Its action linearizes in local 
holomorphic coordinates at $m_0$. For any $k \,\in\, I(m_0)$, since $K$ is abelian, the differential $dk(m_0)$ acts 
trivially on $T_{m_0} (K(A)(m_{0})) \,=\,T_{m_0}(Am_0)$; recall that any $A$--orbit is also a $K(A)$--orbit. On the 
other hand, the differential $dk(m_0)$ acts trivially on
the quotient space $T_{m_0}M /(T_{m_0}(Am_0))$, because any element of $A$ (in 
particular $k$) fixes (globally) each $A$--orbit (i.e., it acts trivially on the space of $A$--orbits). Since compact 
groups are reductive, it follows that the differential of $k$ is trivial. The isotropy representation at $m_0$ 
being faithful (see the proof of Proposition \ref{Killing2}(v)), this implies that $k$ is the identity element. 
Consequently, $I(m_0)$ is trivial, and the $K(A)$--action on $M$ is free.

It follows that $M$ is the total space of a
real principal $K(A)$--bundle over a smooth real manifold $B\,=\,M/K(A)$. The $K(A)$--orbits are complex manifolds,
because they are $A$--orbits. This implies that $B$ is also a complex 
manifold and the projection
\begin{equation}\label{de}
\delta \,:\, M \,\longrightarrow\, B\,=\,M/K(A)
\end{equation}
is a holomorphic submersion whose fibers are elliptic curves.

We will now prove that the fibration $\delta$ in \eqref{de} is a holomorphic principal elliptic
curve bundle over $B$.

The space of elliptic curves $C$ together with a symplectic basis of ${\rm H}^1(C,\, {\mathbb Z})$
is parametrized by Poincar\'e upper half-plane $\mathcal H$. The base $B$ in \eqref{de} is
simply connected, because $M$ is so (and the fibers of $\delta$ are connected). Therefore, fixing a point $b_0\, \in\, B$ and a symplectic
basis of ${\rm H}^1(\delta^{-1}(b_0),\, {\mathbb Z})$,
we get a holomorphic map
$$
\Phi\, :\, B\, \longrightarrow\, {\mathcal H}
$$
for the family of elliptic curves in \eqref{de}.
Since $B$ is compact, this $\Phi$ is a constant function.

Therefore, the fibration $\delta$ in \eqref{de} is isotrivial. By the fundamental result of Fischer and Grauert 
$\delta$ is a holomorphic bundle. Moreover since the fibers of $\delta$ are isomorphic elliptic curves on which 
$K(A)$ acts freely and transitively (by biholomorphisms), for any point $m_0 \,\in\, M$, the orbital map $K(A) 
\,\longrightarrow\, K(A)m_0$ induces on $K(A)$ the same complex structure (that of the fiber type of $\delta$). 
Hence $K(A)$ gets the complex structure of an elliptic curve for which the $K(A)$--action on $M$ is 
holomorphic; this elliptic curve will be denoted by ${\mathcal K}(A)$.
Therefore $\delta$ is a holomorphic principal ${\mathcal K}(A)$--bundle.

Since $\delta$ is a holomorphic principal ${\mathcal K}(A)$--bundle, and $K_M$ is holomorphically trivial,
it follows that $K_B$ is holomorphically trivial. It was noted above that $B$ is simply connected.
So $B$ is a K3 surface.

Theorem 1.1 of \cite{BD3} implies that the holomorphic affine connection on $M$ is locally 
homogeneous, and hence the fundamental group of $M$ is infinite \cite[Corollary 1.1]{BD3}: 
a contradiction. Hence $M$ does not admit any holomorphic projective connection.
\end{proof}

We conjecture that a simply connected compact complex manifold bearing a holomorphic projective connection is 
isomorphic to the complex projective space (endowed with its standard flat projective connection). In particular, 
we conjecture that simply connected compact complex manifolds with trivial canonical bundle do not admit any
holomorphic projective connection. The second part of Proposition \ref{Killing2} which led to the proof of 
Theorem \ref{thm sc} should be seen as a step in this direction. Some other evidence in this direction was provided 
by the main result in \cite{BD2} which says that simply connected compact complex manifolds do not admit 
holomorphic Riemannian metrics; in this case the canonical bundle is automatically trivialized by the volume form 
associated to the holomorphic Riemannian metric.

\section{Appendix}

The aim of this Appendix is to prove the    technical  Lemma \ref{lemma appendix},  used in the proof of Proposition \ref{abelian} (ii),  namely :

{\it $\nabla^{A,B,C,D,E}$ is projectively flat on ${\mathbb T}^3$  if and only if $C\,=\,D$.}

\begin{proof}
For that we shall compute the projective Weyl curvature tensor of $\nabla^{A,B,C,D,E}$.

We start by computing the affine curvature tensor. To simplify the notation in the computation, 
$\nabla^{A,B,C,D,E}$ is denoted simply by $\nabla$.
Recall from \eqref{ct}
that the affine curvature tensor of $\nabla$ is given by the formula $$R(X,Y)Z\,=\,\nabla_X \nabla_Y Z 
-\nabla_Y \nabla_X Z- \nabla_{\lbrack X, Y \rbrack}Z\, .$$
Substituting for $X, Y$ and $Z$ we get the following explicit expressions:
$$R(\frac{\partial}{\partial \tau}, \frac{\partial}{\partial z_1}) \frac{\partial}{\partial z_1}\,=\,
\nabla_{\frac{\partial}{\partial \tau}} \nabla_{\frac{\partial}{\partial z_1}} \frac{\partial}{\partial z_1}-
\nabla_{\frac{\partial}{\partial z_1}} \nabla_{\frac{\partial}{\partial \tau }} \frac{\partial}{\partial z_1}\,=\,
 \nabla_{\frac{\partial}{\partial \tau}} (f_{z_1,z_1}^{z_1} \frac{\partial}{\partial z_1})
$$
$$
- \nabla_{\frac{\partial}{\partial z_1}} (f_{\tau,,z_1}^{\tau} \frac{\partial}{\partial \tau}+
f_{\tau,,z_1}^{z_1}\frac{\partial}{\partial z_1})
$$
$$
=\,
 f_{z_1,z_1}^{z_1}(f_{\tau,,z_1}^{\tau} \frac{\partial}{\partial \tau} + 
f_{\tau,,z_1}^{z_1} \frac{\partial}{\partial z_1}) -f_{\tau, z_1}^{\tau} \nabla_{\frac{\partial}{\partial z_1}}
\frac{\partial}{\partial \tau} -f_{\tau, z_1}^{z_1} \nabla_{\frac{\partial}{\partial z_1}}
\frac{\partial}{\partial z_1}
$$
$$
=\, f_{z_1,z_1}^{z_1}(f_{\tau,,z_1}^{\tau} \frac{\partial}{\partial \tau} + 
f_{\tau,,z_1}^{z_1} \frac{\partial}{\partial z_1})-f_{\tau, z_1}^{\tau} (f_{z_1, \tau }^{\tau}
\frac{\partial}{\partial \tau}+f_{z_1, \tau }^{z_1}
\frac{\partial}{\partial z_1})-f_{\tau, z_1}^{z_1}f_{z_1,z_1}^{z_1} \frac{\partial}{\partial z_1}
$$
$$
=\,\frac{C^2}{4} \frac{\partial}{\partial \tau} -\frac{CE}{4}\frac{\partial}{\partial z_1}\,.
$$
By symmetry we get
$$R(\frac{\partial}{\partial \tau}, \frac{\partial}{\partial z_2}) \frac{\partial}{\partial z_2}\,=\,
\frac{D^2}{4} \frac{\partial}{\partial \tau} -\frac{DE}{4}\frac{\partial}{\partial z_2}\, ,$$
and
$$R(\frac{\partial}{\partial z_1}, \frac{\partial}{\partial z_2}) \frac{\partial}{\partial z_1}\,=\,
\nabla_{\frac{\partial}{\partial z_1}} \nabla_{\frac{\partial}{\partial z_2}} \frac{\partial}{\partial z_1}-
\nabla_{\frac{\partial}{\partial z_2}} \nabla_{\frac{\partial}{\partial z_1 }} \frac{\partial}{\partial z_1}
\,=\, \nabla_{\frac{\partial}{\partial z_1}} (f_{z_1,z_2}^{z_1} \frac{\partial}{\partial z_1} +
f_{z_1,z_2}^{z_2} \frac{\partial}{\partial z_2})
$$
$$
- \nabla_{\frac{\partial}{\partial z_2} } (f_{z_1,z_1}^{z_1} \frac{\partial}{\partial z_1})\,=\,
f_{z_1,z_2}^{z_1}\cdot f_{z_1,z_1}^{z_1} \frac{\partial}{\partial z_1} +f_{z_1,z_2}^{z_2}
\nabla_{\frac{\partial}{\partial z_1}}\frac{\partial} {\partial z_2}-f_{z_1,z_1}^{z_1}
\nabla_{\frac{\partial}{\partial z_2}}\frac{\partial} {\partial z_1}
$$
$$
=\, \frac{C^2}{2} \frac{\partial}{\partial z_1} +( \frac{1}{2}D-C) \nabla_{\frac{\partial}{\partial z_1}}
\frac{\partial} {\partial z_2}\, =\, \frac{C^2}{2} \frac{\partial}{\partial z_1} +(\frac{1}{2}D-C)
(f_{z_1,z_2}^{z_1} \frac{\partial}{\partial z_1}
+ f_{z_1,z_2}^{z_2} \frac{\partial}{\partial z_2})
$$
$$
=\, \frac{C^2}{2} \frac{\partial}{\partial z_1} +(\frac{1}{2}D-C) (\frac{C}{2} \frac{\partial}{\partial z_1} +
\frac{D}{2} \frac{\partial}{\partial z_2})\,=\,
\frac{CD}{4} \frac{\partial}{\partial z_1} + \frac{D^2-2CD}{4} \frac{\partial}{\partial z_2}\, .$$
By symmetry we get $$R(\frac{\partial}{\partial z_1}, \frac{\partial}{\partial z_2})
\frac{\partial}{\partial z_2}\,=\, -R(\frac{\partial}{\partial z_2}, \frac{\partial}{\partial z_1})
\frac{\partial}{\partial z_2}\,=\,- \frac{C^2-2CD}{4} \frac{\partial}{\partial z_1} -
\frac{CD}{4}\frac{\partial}{\partial z_2} .$$

We have
$$R(\frac{\partial}{\partial \tau }, \frac{\partial}{\partial z_1}) \frac{\partial}{\partial z_2}\,=\,
\nabla_{\frac{\partial}{\partial \tau }} \nabla_{\frac{\partial}{\partial z_1}} \frac{\partial}{\partial z_2}-
\nabla_{\frac{\partial}{\partial z_1}} \nabla_{\frac{\partial}{\partial \tau }} \frac{\partial}{\partial z_2}
$$
$$
=\,\nabla_{\frac{\partial}{\partial \tau}} (f_{z_1,z_2}^{z_1} \frac{\partial}{\partial z_1} + f_{z_1,z_2}^{z_2}
\frac{\partial}{\partial z_2}) - \nabla_{\frac{\partial}{\partial z_1}} (f_{\tau ,z_2}^{\tau }
\frac{\partial}{\partial \tau} + f_{\tau ,z_2}^{z_2} \frac{\partial}{\partial z_2})
$$
$$
=\, \frac{1}{2}C (f_{\tau, z_1}^{\tau} \frac{\partial}{\partial \tau} + f_{\tau, z_1}^{z_1}
\frac{\partial}{\partial z_1})+ \frac{1}{2} D( f_{\tau, z_2}^{\tau} \frac{\partial}{\partial \tau} +
f_{\tau, z_2}^{z_2} \frac{\partial}{\partial z_2}) -
f_{z_2, \tau}^{\tau} \nabla_{\frac{\partial}{\partial z_1}}\frac{\partial}{\partial \tau}-
f_{z_2, \tau}^{z_2} \nabla_{\frac{\partial}{\partial z_1}}\frac{\partial}{\partial z_2}
$$
$$
=\, \frac{1}{4} C^2 \frac{\partial}{\partial \tau}+ \frac{1}{4} CE \frac{\partial}{\partial z_1} +
\frac{1}{4} D^2 \frac{\partial}{\partial \tau} + \frac{1}{4} DE \frac{\partial}{\partial z_2} -
\frac{1}{2} D(f_{z_1, \tau}^{\tau} \frac{\partial}{\partial \tau} + f_{z_1, \tau}^{z_1}
\frac{\partial}{\partial z_1})
$$
$$
- \frac{1}{2} E(f_{z_1, z_2}^{z_1} \frac{\partial}{\partial z_1} + f_{z_1, z_2}^{z_2} \frac{\partial}{\partial z_2})
\,=\, \frac{1}{4} (C^2 + D^2- CD) \frac{\partial}{\partial \tau}- \frac{1}{4}DE \frac{\partial}{\partial z_1}.$$
By symmetry, $$R(\frac{\partial}{\partial \tau }, \frac{\partial}{\partial z_2}) \frac{\partial}{\partial z_1}
\,=\, \frac{1}{4} (C^2 + D^2- CD) \frac{\partial}{\partial \tau} - \frac{1}{4}CE \frac{\partial}{\partial z_2}.$$
 
We also compute that
$$R(\frac{\partial}{\partial z_1 }, \frac{\partial}{\partial z_2}) \frac{\partial}{\partial\tau}
\,=\, \nabla_{\frac{\partial}{\partial z_1}} \nabla_{\frac{\partial}{\partial z_2}} \frac{\partial}{\partial \tau }-\nabla_{\frac{\partial}{\partial z_2}} \nabla_{\frac{\partial}{\partial z_1 }} \frac{\partial}{\partial \tau}$$
$$=\, \nabla_{\frac{\partial}{\partial z_1}} (f_{\tau, z_2}^{\tau} \frac{\partial}{\partial \tau} + f_{\tau, z_2}^{z_2} \frac{\partial}{\partial z_2})
- \nabla_{\frac{\partial}{\partial z_2}} (f_{\tau, z_1}^{\tau} \frac{\partial}{\partial \tau} + f_{\tau, z_1}^{z_1} \frac{\partial}{\partial z_1})$$
$$=\,f_{\tau, z_2}^{\tau} (f_{\tau, z_1}^{z_1} \frac{\partial}{\partial z_1} + f_{\tau, z_1}^{\tau} \frac{\partial}{\partial\tau})+ f_{\tau, z_2}^{z_2} (f_{z_1,z_2}^{z_1} \frac{\partial}{\partial z_1} + f_{z_1, z_2}^{z_2} \frac{\partial}{\partial z_2})$$
$$ -f_{\tau, z_1}^{\tau } (f_{\tau ,z_2}^{z_2} \frac{\partial}{\partial z_2} 
+ f_{\tau, z_2}^{\tau } \frac{\partial}{\partial\tau}) - f_{\tau, z_1}^{z_1} (f_{z_1,z_2}^{z_1} \frac{\partial}{\partial z_1} + f_{z_1, z_2}^{z_2} \frac{\partial}{\partial z_2})$$
$$ =\, \frac{DE}{4} \frac{\partial}{\partial z_1} -\frac{CE}{4}\frac{\partial}{\partial z_2}\, ,$$
and 
$$R(\frac{\partial}{\partial \tau }, \frac{\partial}{\partial z_1}) \frac{\partial}{\partial\tau}
\,=\, \nabla_{\frac{\partial}{\partial \tau }} \nabla_{\frac{\partial}{\partial z_1}} \frac{\partial}{\partial \tau }-\nabla_{\frac{\partial}{\partial z_1}} \nabla_{\frac{\partial}{\partial \tau}} \frac{\partial}{\partial \tau}$$
$$=\, \nabla_{\frac{\partial}{\partial \tau}} (f_{\tau, z_1}^{z_1} \frac{\partial}{\partial z_1} +
f_{\tau, z_1}^{\tau} \frac{\partial}{\partial \tau})- \nabla_{\frac{\partial}{\partial z_1}} ( f_{\tau, \tau}^{\tau} \frac{\partial}{\partial \tau}+ f_{\tau, \tau}^{z_1} \frac{\partial}{\partial z_1} + f_{\tau, \tau}^{z_2} \frac{\partial}{\partial z_2})$$
$$=\, f_{\tau, z_1}^{z_1} (f_{\tau, z_1}^{\tau} \frac{\partial}{\partial \tau}+f_{\tau, z_1}^{z_1} \frac{\partial}{\partial z_1})+ f_{\tau, z_1}^{\tau} (f_{\tau, \tau}^{\tau} \frac{\partial}{\partial \tau} + f_{\tau, \tau}^{z_1} \frac{\partial}{\partial z_1} + f_{\tau, \tau}^{z_2} \frac{\partial}{\partial z_2})$$
$$- f_{\tau, \tau}^{\tau} (f_{\tau, z_1}^{z_1} \frac{\partial}{\partial z_1}+f_{\tau, z_1}^{\tau} \frac{\partial}{\partial \tau}) -
f_{\tau, \tau}^{z_1} f_{z_1, z_1}^{z_1} \frac{\partial}{\partial z_1}- f_{\tau, \tau}^{z_2} (f_{z_1, z_2}^{z_1} \frac{\partial}{\partial z_1}+f_{z_1, z_2}^{z_2} \frac{\partial}{\partial z_2})$$
$$=\, \frac{EC}{4} \frac{\partial}{\partial \tau}+ (-\frac{E^2}{4}-\frac{1}{2}C(A+B)) \frac{\partial}{\partial z_1}+ \frac{1}{2}B(C-D) \frac{\partial}{\partial z_2}. $$
By symmetry, $$R(\frac{\partial}{\partial \tau }, \frac{\partial}{\partial z_2}) \frac{\partial}{\partial\tau}
\,=\, \frac{ED}{4} \frac{\partial}{\partial \tau}+ (-\frac{E^2}{4}-\frac{1}{2}D(A+B)) \frac{\partial}{\partial z_2}+ \frac{1}{2}A(D-C) \frac{\partial}{\partial z_1}. $$

Now we compute the Ricci curvature defined in \eqref{ct2}:
$$\rm{Ricci} (\frac{\partial}{\partial z_1},\, \frac{\partial}{\partial z_2})
\,=\, \rm{Ricci} (\frac{\partial}{\partial z_2},\, \frac{\partial}{\partial z_1})\,=\,\frac{1}{4} (C^2+D^2)\, ;$$
$$\rm{Ricci} (\frac{\partial}{\partial \tau}, \,\frac{\partial}{\partial z_1})\,=\,
\rm{Ricci} (\frac{\partial}{\partial z_1},\, \frac{\partial}{\partial \tau})\,=\,\frac{1}{2}CE\, ;$$
$$\rm{Ricci} (\frac{\partial}{\partial z_2},\, \frac{\partial}{\partial \tau})\,=\,
\rm{Ricci} (\frac{\partial}{\partial \tau},\, \frac{\partial}{\partial z_2})\,=\,\frac{1}{2}DE\, ;$$
$$\rm{Ricci} (\frac{\partial}{\partial z_1},\, \frac{\partial}{\partial z_1})\,=\,\frac{1}{4}(C^2+2CD-D^2)\, ;$$
$$\rm{Ricci} (\frac{\partial}{\partial z_2}, \,\frac{\partial}{\partial z_2})\,=\,\frac{1}{4}(D^2+2CD-C^2)\, ;$$
$$\rm{Ricci} (\frac{\partial}{\partial \tau},\, \frac{\partial}{\partial \tau})\,=\,
\frac{1}{2}E^2+ \frac{1}{2}(A+B)(C+D)\, .$$

Recall from \eqref{wt} the formula for the Weyl projective tensor $W$ in dimension three:
$$
W(X,\,Y)Z\,=\, R(X,\,Y)Z-\frac{1}{4} \rm{Tr}R(X,\,Y)Z-\frac{1}{2} \lbrack \rm{Ricci}(Y,\,Z)X-
\rm{Ricci}(X,\,Z)(Y)\rbrack
$$
$$
- \frac{1}{8} \lbrack \rm{Tr}R(Y,\,Z)X-\rm{Tr}R(X,\,Z)(Y)\rbrack\, .$$
Also, recall that the connection $\nabla$ is projectively flat if and only if the tensor $W$ vanishes identically.
The Weyl projective tensor $W$ is anti-symmetric in $(X,\, Y)$ and satisfies the first Bianchi identity in \eqref{bii}.

Since $\rm{Ricci}$ for $\nabla$ is symmetric, it follows that $\rm{Tr}R$ vanishes
identically. Connections with 
symmetric {\rm Ricci} tensor are called equiaffine. The geometrical meaning of it is that there is a parallel 
holomorphic volume form \cite[p.~222, Appendix A.3]{OT}. The above formula for
Weyl projective tensor for $\nabla$ reduces to
$$W(X,\,Y)Z\,=\, R(X,Y)Z-\frac{1}{2} \lbrack \rm{Ricci}(Y,Z)X-\rm{Ricci}(X,Z)(Y) \rbrack.$$
 
The computation for $W(\frac{\partial}{\partial z_1}, \frac{\partial}{\partial z_2}) \frac{\partial}{\partial z_2}$
is as follows:
$$W(\frac{\partial}{\partial z_1}, \frac{\partial}{\partial z_2}) \frac{\partial}{\partial z_2}\,=\, R(\frac{\partial}{\partial z_1}, \frac{\partial}{\partial z_2})\frac{\partial}{\partial z_2}-\frac{1}{2} \lbrack \rm{Ricci}(\frac{\partial}{\partial z_2}, \frac{\partial}{\partial z_2})\frac{\partial}{\partial z_1}-\rm{Ricci}(\frac{\partial}{\partial z_1}, \frac{\partial}{\partial z_2})\frac{\partial}{\partial z_2})\rbrack$$
$$=\,\frac{1}{4}(2CD-C^2) \frac{\partial}{\partial z_1} -\frac{1}{4}CD \frac{\partial}{\partial z_2}-\frac{1}{8}(D^2+2CD-C^2) \frac{\partial}{\partial z_1} + \frac{1}{8} (C^2+D^2)\frac{\partial}{\partial z_2}$$
$$=\,-\frac{1}{8} (C-D)^2 \frac{\partial}{\partial z_1} + \frac{1}{8} (C-D)^2 \frac{\partial}{\partial z_2}\, .$$
Hence
$$W(\frac{\partial}{\partial z_1}, \frac{\partial}{\partial z_2}) \frac{\partial}{\partial z_2}\,=\,
-\frac{1}{8} (C-D)^2 \frac{\partial}{\partial z_1} + \frac{1}{8} (C-D)^2 \frac{\partial}{\partial z_2}\, .$$
Also, $$W(\frac{\partial}{\partial z_1}, \frac{\partial}{\partial z_2}) \frac{\partial}{\partial z_1}\,=\,
R(\frac{\partial}{\partial z_1}, \frac{\partial}{\partial z_2})\frac{\partial}{\partial z_1}-\frac{1}{2} \lbrack \rm{Ricci}(\frac{\partial}{\partial z_2}, \frac{\partial}{\partial z_1})\frac{\partial}{\partial z_1}-\rm{Ricci}(\frac{\partial}{\partial z_1}, \frac{\partial}{\partial z_1})\frac{\partial}{\partial z_2})\rbrack$$
$$=\,\frac{1}{4}CD \frac{\partial}{\partial z_1} + \frac{1}{4}(D^2-2CD)\frac{\partial}{\partial z_2}-\frac{1}{8}(C^2+D^2) \frac{\partial}{\partial z_1} + \frac{1}{8} (C^2+2CD-D^2)\frac{\partial}{\partial z_2}$$
$$=-\frac{1}{8}\, (C-D)^2 \frac{\partial}{\partial z_1} + \frac{1}{8} (C-D)^2 \frac{\partial}{\partial z_2}.$$

In conclusion, $$W(\frac{\partial}{\partial z_1},\, \frac{\partial}{\partial z_2}) \frac{\partial}{\partial z_1}
\,=\, -\frac{1}{8} (C-D)^2 \frac{\partial}{\partial z_1} + \frac{1}{8} (C-D)^2 \frac{\partial}{\partial z_2}\, .$$
We get that $$W(\frac{\partial}{\partial z_1}, \frac{\partial}{\partial z_2}) \frac{\partial}{\partial \tau}\,=\,
R(\frac{\partial}{\partial z_1}, \frac{\partial}{\partial z_2})\frac{\partial}{\partial \tau}-\frac{1}{2}
\lbrack \rm{Ricci}(\frac{\partial}{\partial z_2}, \frac{\partial}{\partial \tau})\frac{\partial}{\partial z_1}
-\rm{Ricci}(\frac{\partial}{\partial z_1}, \frac{\partial}{\partial \tau})\frac{\partial}{\partial z_2})\rbrack
\,=\, 0\, .$$
Hence we have
$$W(\frac{\partial}{\partial z_1},\, \frac{\partial}{\partial z_2}) \frac{\partial}{\partial \tau}\,=\, 0\, . $$

By similar direct computations we get that $$W(\frac{\partial}{\partial \tau},\, \frac{\partial}{\partial z_1})
\frac{\partial}{\partial z_2}\,=\,\frac{1}{8} (C-D)^2 \frac{\partial}{\partial \tau}$$
and
$$W(\frac{\partial}{\partial \tau}, \,\frac{\partial}{\partial z_1}) \frac{\partial}{\partial\tau}\,=\,
\frac{1}{4}(A+B)(D-C)\frac{\partial}{\partial z_1} + \frac{1}{2} B(C-D) \frac{\partial}{\partial z_2}\, .$$
 
Also by direct computation:
$$W(\frac{\partial}{\partial \tau}, \,\frac{\partial}{\partial z_1}) \frac{\partial}{\partial z_1}\,=\,
\frac{1}{8} (C-D)^2 \frac{\partial}{\partial \tau}$$
and
$$W(\frac{\partial}{\partial \tau}, \,\frac{\partial}{\partial z_2}) \frac{\partial}{\partial z_2}
\,=\, \frac{1}{8} (C-D)^2 \frac{\partial}{\partial \tau}\, .$$

Again by a direct computation, $$W(\frac{\partial}{\partial z_2},\, \frac{\partial}{\partial \tau})
\frac{\partial}{\partial\tau}\,=\, \frac{1}{2}A(C-D) \frac{\partial}{\partial z_1}+ 
\frac{1}{4}(A+B)(D-C)\frac{\partial}{\partial z_2}\, .$$

The other components of the Weyl tensor can be obtained using the first Bianchi identity in \eqref{bii}.
Indeed, from $$W(\frac{\partial}{\partial z_1}, \,\frac{\partial}{\partial z_2}) \frac{\partial}{\partial \tau}+ 
W(\frac{\partial}{\partial z_2},\, \frac{\partial}{\partial \tau}) \frac{\partial}{\partial z_1}+ 
W(\frac{\partial}{\partial \tau},\, \frac{\partial}{\partial z_1}) \frac{\partial}{\partial z_2}\,=\,0$$ we infer that 
\begin{equation}\label{ai2}
W(\frac{\partial}{\partial z_2}, \,\frac{\partial}{\partial \tau}) \frac{\partial}{\partial 
z_1}\,=\,-\frac{1}{8}(C-D)^2 \frac{\partial}{\partial \tau}\, .
\end{equation}

Notice that the Weyl projective tensor $W$ does not depend on the parameter $E$. This is due to 
the facts that $W$ is a projective invariant and $\nabla^{A,B,C,D,E}$ is projectively 
equivalent with $\nabla^{A,B,C,D,0}$. Indeed, let $\phi_{\tau}$ be the holomorphic 
one-form on ${\mathbb T}^3$ defined by
$$
\phi_{\tau} (\frac{\partial}{\partial 
\tau})\,=\, \frac{1}{2}E\ \ \text{ and }\ \ \phi_{\tau} (\frac{\partial}{\partial z_i})\, =\, 0
$$
for $i=1,2$. Then 
\begin{equation}\label{ai}
\nabla^{A,B,C,D,E}_XY -\nabla^{A,B,C,D,0}_XY\,=\, \phi_{\tau}(X)(Y)+ \phi_{\tau}(Y)X
\end{equation}
for all holomorphic vector 
fields $X,\, Y$; the identity in \eqref{ai} being tensorial it can be easily
verified for any pair of vectors chosen from the 
basis $(\frac{\partial}{\partial \tau}, \,\frac{\partial}{\partial z_1},\, \frac{\partial}{\partial z_2})$. From
\eqref{ai} it follows immediately that $\nabla^{A,B,C,D,E}$ and $\nabla^{A,B,C,D,0}$ are
projectively equivalent.

{}From \eqref{ai2} and the expression of all components of the Weyl projective tensor,  it follows that $W$ vanishes identically if and only if $C\,=\, D$.

\end{proof} 

\section*{Acknowledgements}

We thank G. Chenevier for a useful discussion about the geometry of Shimura curves.

This work has been supported by the French government through the UCAJEDI Investments in the Future project 
managed by the National Research Agency (ANR) with the reference number ANR2152IDEX201. The first-named author 
is partially supported by a J. C. Bose Fellowship, and school of mathematics, TIFR, is supported by 
12-R$\&$D-TFR-5.01-0500. The second-named author wishes to thank TIFR Mumbai for hospitality.


\end{document}